\documentclass[oneside,11pt]{amsart}

%%%%%%%%%%%%
% Packages %
%%%%%%%%%%%%

\usepackage[utf8]{inputenc}
\usepackage[T1]{fontenc} % -- my older font
\usepackage{microtype}
\usepackage[english]{babel}
\usepackage{pdfpages}
\usepackage{emptypage}
\usepackage{geometry}
\usepackage[lined]{algorithm2e}

\usepackage{graphicx}

\usepackage{amsfonts}
\usepackage{amssymb,amsthm,amsmath,amstext,amsxtra}
\usepackage{bm}       % allows bold italic letters in math mode
\usepackage{mathtools} % allows more extendible arrows
\usepackage{hyperref}
\definecolor{mylinkcolor}{rgb}{0.5,0.0,0.0}
\definecolor{myurlcolor}{rgb}{0.0,0.0,0.75}
\hypersetup{colorlinks=true,urlcolor=myurlcolor,citecolor=myurlcolor,linkcolor=mylinkcolor,linktoc=page,breaklinks=true}
\usepackage{colonequals}
\usepackage{float} % Needed to force the placement of figures
\usepackage{color}
\usepackage{xcolor}
\usepackage{xspace}
\usepackage{numprint}

\usepackage{chngcntr}
\counterwithin{figure}{section}

%\usepackage{caption}
%\DeclareCaptionFont{white}{\color{white}}
%\DeclareCaptionFormat{listing}{\colorbox{gray}{\parbox{\textwidth}{#1#2#3}}}
%\captionsetup[lstlisting]{format=listing,labelfont=white,textfont=white}

\usepackage[normalem]{ulem}

\usepackage{csquotes} % removes one warning of biblatex
\usepackage[style=ieee-alphabetic,backend=biber, sorting=nyt, doi=false,isbn=false,url=false]{biblatex}
\newbibmacro{string+doiurlisbn}[1]{%
  \iffieldundef{doi}{%
    \iffieldundef{url}{%
      \iffieldundef{isbn}{%
        \iffieldundef{issn}{%
          #1%
        }{%
          \href{http://books.google.com/books?vid=ISSN\thefield{issn}}{#1}%
        }%
      }{%
        \href{http://books.google.com/books?vid=ISBN\thefield{isbn}}{#1}%
      }%
    }{%
      \href{\thefield{url}}{#1}%
    }%
  }{%
    \href{http://dx.doi.org/\thefield{doi}}{#1}%
  }%
}

\DeclareFieldFormat{title}{\usebibmacro{string+doiurlisbn}{\mkbibemph{#1}}}
\DeclareFieldFormat[article,incollection,inproceedings]{title}%
    {\usebibmacro{string+doiurlisbn}{\mkbibquote{#1}}}

\usepackage{tikz-cd}
\usepackage{chngcntr}

\counterwithin*{equation}{section}
\counterwithin*{equation}{subsection}

\usepackage{yfonts}
\usepackage{mathrsfs}
\usepackage{bbm}
\usepackage[%
  cal=euler,
  %bb=ams,
  %frak=euler,
  scr=rsfso%
]{mathalfa} % look at mathalfa documentation for many more options

%\usepackage{pdfcomment}
%\usepackage{fixme}
%\fxsetup{layout=pdfmargin}

\usetikzlibrary{decorations.pathmorphing}

\DeclareMathOperator{\DC}{DC}
\DeclareMathOperator{\Alb}{Alb}

\DeclareMathOperator{\End}{End}

\DeclareMathOperator{\im}{im}

\DeclareMathOperator{\spec}{Spec}

\DeclareMathOperator{\gal}{Gal}

\DeclareMathOperator{\Frob}{Frob}

\DeclareMathOperator{\rk}{rk}

 % length
   % obstruction map
%\DeclareMathOperator{\fun}{Fun}

\DeclareMathOperator{\chr}{char}

\newcommand{\dd}{\mathrm{d}}

\DeclareMathOperator{\vol}{vol}

\newcommand{\dR}{{\mathrm{dR}}}
\newcommand{\betti}{{\mathrm{Betti}}}

\newcommand{\crys}{{\mathrm{crys}}}
\newcommand{\integral}{{\mathrm{int}}}

\DeclareMathOperator{\NS}{NS}

\DeclareMathOperator{\Span}{span}

\DeclareMathOperator{\Gr}{Gr}

\newcommand{\cxhat}{\widehat\cx}
\newcommand{\Zxp}{\Zz_{\xp}}
\newcommand{\Qxp}{\Qq_{\xp}}

\newcommand{\Qxpbar}{\overline{\Qq}_{\xp}}

\newcommand{\Qp}{\Qq_p}
\newcommand{\Qbar}{\overline{\Qq}}

\newcommand{\Hprim}{H_{\mathrm{prim}}}
\newcommand{\Hlog}{H_{\mathrm{log}}}
\newcommand{\Hu}{H_{U}}
\newcommand{\Ttilde}{\widetilde T}

\usepackage{array}   % for \newcolumntype macro
\newcolumntype{M}{>{$}c<{$}} % math-mode version of "c" column type, in tabular
\newcolumntype{L}{>{$}l<{$}}
\newcolumntype{R}{>{$}r<{$}}
\usepackage{arydshln}
\usepackage{multirow}

\newcommand{\frakp}{{\mathfrak{p}}}

% Sheaf hom and ext
%\DeclareMathOperator{\Hom}{\mathscr{H}\text{\kern -3pt {\calligra\large om}}\,}
%\DeclareMathOperator{\Ext}{\mathscr{E}\text{\kern -3pt {\calligra\large xt}}\,}
\DeclareMathOperator{\Hom}{\mathscr{H}\kern -2pt om}
\DeclareMathOperator{\Ext}{\mathscr{E}\kern -1.5pt xt}

%%%%%%%%%%%%%%%%
% Environments %
%%%%%%%%%%%%%%%%

\newtheorem{theorem}{Theorem}[section]
\numberwithin{theorem}{section} % Theorem number is of the form Section.Subsection.Thm

% define a 'named' and numbered theorem environment

\newcommand{\nameofthm}{}

\newtheorem{genericThm}[theorem]{\nameofthm}

% define a iff-proof environent
%
{%
  \begin{proof}$ $\newline
  \indent ($\implies$) 
  %($\impliedby$)
}
{\end{proof}}

\newtheorem*{theorem*}{Theorem}

\newtheorem{lemma}[theorem]{Lemma}
\newtheorem{corollary}[theorem]{Corollary}
\newtheorem{proposition}[theorem]{Proposition}

\newtheorem{conjecture}[theorem]{Conjecture}

\theoremstyle{definition}

\newtheorem*{definition*}{Definition}
\newtheorem{definition}[theorem]{Definition}

\newtheorem{remark}[theorem]{Remark}
\newtheorem{example}[theorem]{Example}

\newtheorem*{notation*}{Notation}
%\newtheorem{remark}[theorem]{Remark}

%\numberwithin{equation}{subsection}

% define an environment which takes name as input
\newcommand{\nameofenv}{}
\newtheorem*{generic}{\nameofenv}

\newcommand{\nameOfNumEnv}{}
\newtheorem{genericNumbered}[theorem]{\nameOfNumEnv}

%%%%%%%%%%%%%%%%%%%%
% Various commands %
%%%%%%%%%%%%%%%%%%%%

% Moduli commands

%% coarse moduli

%\newcommand{\sgt}[2]{\mathcal{S}_{#1}^{#2}}
%\newcommand{\sgtbar}[2]{\overline{\mathcal{S}}\vphantom{S}_{#1}^{#2}}

%\newcommand{\covg}[1]{\cov_g(#1)}

%% stack moduli

% end of moduli commands

\makeatletter
\def\imod#1{\allowbreak\mkern10mu({\operator@font mod}\,\,#1)}
\makeatother

\newcommand{\del}{\partial}

\newcommand{\ppp}{\Pp^3}

\providecommand{\binom}[2]{{#1\choose#2}}

\newcommand{\too}{\longrightarrow}
\newcommand{\tos}{\twoheadrightarrow}
\newcommand{\toi}{\hookrightarrow}
\newcommand{\isoto}{\overset{\sim}{\to}}

\newcommand{\Cc}{\mathbbm{C}}

\newcommand{\Ff}{\mathbbm{F}}

\newcommand{\Nn}{\mathbbm{N}}
\newcommand{\Qq}{\mathbbm{Q}}
\newcommand{\Pp}{\mathbbm{P}}

\newcommand{\Zz}{\mathbbm{Z}}

\renewcommand{\H}{\mathrm{H}}
\newcommand{\F}{\mathrm{F}}

\newcommand{\CH}{\mathrm{CH}}

\newcommand{\xp}{\mathfrak{p}}
\newcommand{\xP}{\mathfrak{P}}
\newcommand{\xq}{\mathfrak{q}}

\newcommand{\cc}{\mathcal{C}}

\newcommand{\cj}{\mathcal{J}}

\newcommand{\co}{\mathcal{O}}

\newcommand{\cp}{\mathcal{P}}

\newcommand{\cu}{\mathcal{U}}

\newcommand{\cv}{\mathcal{V}}
\newcommand{\cX}{\mathcal{X}}
\newcommand{\cx}{\mathcal{X}}
\newcommand{\cy}{\mathcal{Y}}
\newcommand{\cz}{\mathcal{Z}}

\renewcommand{\k}{{\sf k}}

% Task specific

% reference to stacks project

\addbibresource{my.bib}

  \title[Effective obstruction to Tate classes]{Effective obstruction to lifting Tate classes from positive characteristic}
\date{\today}
\author[E. Costa]{Edgar Costa}
\address{Dept. of Mathematics, Massachusetts Institute of Technology,
Cambridge, MA, USA}
\email{edgarc@mit.edu}
\urladdr{\url{https://edgarcosta.org}}

\author[E. C. Sertöz]{Emre Can Sertöz}
\address{Max Planck Institute for Mathematics in the Sciences, 04103 Leipzig, Germany}
\email{emresertoz@gmail.com}
\urladdr{\url{https://emresertoz.com}}

\newcounter{ouralgo}[section]
\renewcommand{\theouralgo}{\arabic{section}.\arabic{ouralgo}}

\begin{document}
\begin{abstract}
  We give an algorithm that takes a smooth hypersurface over a number field and computes a $p$-adic approximation of the obstruction map on the Tate classes of a finite reduction.
  This gives an upper bound on the ``middle Picard number'' of the hypersurface.
  The improvement over existing methods is that our method relies only on a single prime reduction and gives the possibility of cutting down on the dimension of Tate classes by two or more.
  The obstruction map comes from $p$-adic variational Hodge conjecture and we rely on the recent advancement by Bloch--Esnault--Kerz to interpret our bounds.
\end{abstract}

\maketitle

\section{Introduction}
Let $X_{\Cc} \colonequals Z(f) \subset \Pp^{2r+1}_\Cc$ be a smooth complex hypersurface  of even dimension $2r$ with the polynomial $f$ having algebraic coefficients.
The dimension $\rho^r(X_{\Cc})$ of the space of codimension $r$ algebraic cohomology classes $A^r(X_{\Cc}) \otimes_{\Zz} \Qq$ in $\H_{\betti}^{2r}(X_\Cc,\Qq)$ is called the \emph{middle Picard number} of $X_\Cc$.
Due to the Lefschetz hyperplane theorem, there are no interesting algebraic cycles in codimension different from $r$.
The middle Picard number is a difficult invariant to compute, but it strongly constrains the geometry, arithmetic, and dynamics of the hypersurface.
Recently, there have been numerous developments on computing the middle Picard number and we review them in Section~\ref{sec:literature}.

The main contributions of this paper are Algorithm~\ref{alg:main} and Proposition~\ref{prop:even_better_bound}.
They give practical means for computing upper bounds for $\rho^r(X_{\Cc})$ starting with the defining equation~$f$. The key idea is to take a prime reduction of the hypersurface and compute the obstruction to lifting algebraic (or Tate) classes of the reduction to characteristic zero.
We give an implementation\footnote{The code is available at \url{https://github.com/edgarcosta/crystalline_obstruction}.} of this algorithm in \texttt{SageMath}~\cite{sage}.
In Section~\ref{sec:examples}, this implementation is used to illustrate the strengths and weaknesses of the method.
To the best of our knowledge, our method gives a superior performance over any other method that has a computer implementation.

There is a practical method, due to van~Luijk~\cite{vanluijk-07}, for bounding the Picard number of K3 surfaces, which has now become the standard.
When the K3 surface $X$ is defined over a number field, the Picard numbers of its prime reductions give upper bounds on the Picard number $\rho^1(X_\Cc)$.
The method of van~Luijk can improve these bounds by at most one, via a comparison of two different reductions.
However, some K3 surfaces with real multiplication do not admit a prime reduction where the Picard number jump is less than two~\cite{charles-14}.

Our approach is different, and we do not suffer from these limitations.
For instance, we do not need two distinct prime reductions to improve the bound. We also see in Example~\ref{ex:real_multiplication} that our method does not necessarily break down in the presence of real multiplication.
Additionally, in Example~\ref{ex:galois}, we give an example of a K3 surface, whose Picard number jumps by six at a prime reduction, and show that our method detects this jump.

Let us recall that for simply connected surfaces such as K3s, the Picard group and its image in integral cohomology, namely the Néron--Severi group, coincide.
These are free abelian groups whose rank equals the Picard number.

In Example~\ref{ex:database}, we go through the \numprint{184725} smooth quartic K3 surfaces in the database of~\cite{lairez-sertoz} to verify computations and benchmark performance. In Section~\ref{sec:examples}, we give many more examples, involving Picard numbers of quintic surfaces and endomorphisms of Jacobian threefolds.

For the rest of the paper we will work with number fields as opposed to the field of complex numbers. In the end we do not lose generality since $\rho^r(X_{\Cc}) = \rho^r(X_{\Qbar})$, see for instance Lemma~\ref{lem:rhor_over_CC_and_Qbar}.

\subsection{Lifting algebraic cycles from finite characteristic}

We will now sketch the core concept behind our algorithm. To simplify the exposition, let us assume $f$ has rational coefficients so that $X = Z(f) \subset \Pp^{2r+1}_{\Qq}$. For most prime numbers $p \in \Zz$ the reduction $X_p \colonequals Z(f \bmod{p}) \subset \Pp^{2r+1}_{\Ff_p}$ will be well defined and smooth. Fix one such prime $p$. The choice of such a prime introduces an infinite sequence of hypersurfaces
\begin{equation}
  Z(f \bmod{p^n}) \subset \Pp^{2r+1}_{\Zz/p^n}
\end{equation}
that can be said to approximate $X$.

As we shall soon see, one of the strong points of working with a finite field $\Ff_p$ is that the determination of algebraic cycles on the variety $X_p /\Ff_p$ is greatly simplified in comparison to $X/\Qq$. After determining algebraic cycles on $X_p$, one must determine which cycles can be lifted to $Z(f \bmod{p^n})$ for all $n$, and eventually to $X$.

Hodge theory gives an obstruction to lifting cycles from characteristic $p$, namely the map~\eqref{eq:intro_pi} below. The $p$-adic variational Hodge conjecture states that any unobstructed cycle lifts to $X$. See~\cite{emerton-padic-variation} and~\cite{AMMN} for more details on this conjecture.
Bloch--Esnault--Kerz~\cite{bloch-esnault-kerz-14} prove that an intermediate statement is true: an unobstructed cycle can lift compatibly to all the hypersurfaces $Z(f \bmod{p^n})$ approximating~$X$. Except when $r=1$, it is unknown whether this is sufficient for lifting to $X$.

As an example, suppose $X$ is a K3 surface. Then the obstruction to lifting a curve $C \subset X_p$ can be represented by a $p$-adic integer $a_1p +a_2p^2+\dots \in p\Zz_p$. The curve $C$ is the restriction of a curve in $Z(f \bmod{p^n})$ if and only if $a_1=a_2=\dots=a_{n-1}=0$, see also~\cite{elsenhans-jahnel-11b}.

The crystalline cohomology is the cohomology theory we need in order to compute obstructions of cycles. This theory associates a well behaved cohomology to smooth and proper varieties over finite fields~\cite{berthelot-ogus,berthelot74,CL98}. A remarkable feature of the theory is that the crystalline cohomology of $X_p$ can be identified with the algebraic de~Rham cohomology of $X$ with $p$-adic coefficients (Section~\ref{sec:crystalline_to_dR}),
\begin{equation}
 \H^{2r}_\crys (X_p / \Zz_p) \otimes \Qp \simeq \H_{\dR}^{2r}(X/\Qq) \otimes_{\Qq} \Qp.
\end{equation}

The algebraic de~Rham cohomology is defined as the hypercohomology of the usual differential complex $\Omega_{X/\Qq}^{\bullet}$, see~\cite{grothendieck-dR} for the conception of this idea and~\cite[\S 3]{huber17} for more details.
The resulting cohomology groups have coefficients in the base field and when tensored with real or complex numbers, one recovers the classical de~Rham cohomology groups.

The image of the crystalline cycle class map is a group $A^r(X_p) \subset \H^{2r}_\crys (X_p / \Zz_p)$ representing algebraic cohomology classes of $X_p$.
On the other hand, the cycle class map for de~Rham cohomology also gives us the group $A^r(X) \subset \H_{\dR}^{2r}(X/\Qq)$ of algebraic cycles on $X$ that are defined over $\Qq$.
Let us denote the $\Qq$-spans of $A^r(X)$ and $A^r(X_p)$ with $A^r(X)_{\Qq}$ and $A^r(X_p)_{\Qq}$ respectively.

There is a containment $\iota \colon A^r(X)_{\Qq} \toi A^r(X_p)_{\Qq}$ obtained by reducing subvarieties of $X$ modulo $p$, see Section~\ref{sec:cycle_class_maps}.
Moreover, comparing with complex Hodge theory one sees that $A^r(X)$ must be contained in the $r$-th piece of the Hodge filtration $\F^r  \H^{2r}_\crys (X_p / \Zz_p)$ on $\H^{2r}_\crys (X_p / \Zz_p)$.
Consider the quotient map
\begin{equation}\label{eq:intro_pi}
\pi \colon  \H^{2r}_\crys (X_p / \Zz_p) \to  \H^{2r}_\crys (X_p / \Zz_p) / \F^r  \H^{2r}_\crys (X_p / \Zz_p)
\end{equation}
and note that $\pi(A^r(X))= 0$. The theorem of Bloch--Esnault--Kerz (Theorem~\ref{thm:simplified bloch-esnault-kerz-14}) states that any element in the kernel of
\begin{equation}
  \pi_A \colonequals \pi|_{A^r(X_p)}
\end{equation}
lifts to a compatible sequence of algebraic cycles on $Z(f \bmod{p^n})$ for each $n \ge 1$. It is unknown whether every element in $\ker \pi_A$ comes from an element in $A^r(X)$, see \emph{loc.\ cit.} However, when $r=1$ we do have $A^1(X)=\ker \pi_A$ due to~\cite[Theorem~5.1.4]{EGAIII-2}.

See Section~\ref{sec:obstruction_map} for more details on the obstruction map $\pi_A$.
We also recommend the introduction of the paper~\cite{bloch-esnault-kerz-14} for the intuition it provides in interpreting their main result.

\subsection{Tate classes as a substitute for algebraic classes}\label{sec:intro_tate}

Tate's conjecture gives a hold on the space of algebraic cohomology classes $A^r(X_p)$. The Frobenius action on $X_p$ induces an action on crystalline cohomology by functoriality and, therefore, a map $\Frob \colon \H^{2r}_\crys (X_p / \Zz_p) \to \H^{2r}_\crys (X_p / \Zz_p)$. Tate's conjecture states that the $\Qp$-span of $A^r(X_p)$ coincides with $p^r$-eigenspace of $\Frob$.

Write $T^r(X_p) \colonequals \ker \left( \Frob -p^r \right)$.
The elements of $T^r(X_p)$ are called Tate classes. The inclusion $A^r(X_p) \subset T^r(X_p)$ is always true and $A^r(X_p)_{\Qq} \otimes_{\Qq} \Qp \simeq T^r(X_p)$ is the Tate conjecture. This conjecture is known to hold for K3 surfaces~\cite{charles-13, madapusi-13, kim-madapusi-16}.

Consider the restriction of the quotient map~\eqref{eq:intro_pi} to the space of Tate classes,
\begin{equation}
  \pi_T \colonequals \pi|_{T^r(X_p)}.
\end{equation}
By the discussion above, $A^r(X)$ is contained in $\ker \pi_T$ and one can deduce the inequality
\begin{equation}\label{eq:intro_bound}
  \dim_{\Qq} A^r(X)_{\Qq} \le \dim_{\Qp} \ker \pi_T.
\end{equation}

Our Algorithm~\ref{alg:main} describes a practical method to bound $\dim_{\Qp} \ker \pi_T$. We should point out that the codomain of $\pi_T$ is typically small when viewed as a $\Qp$-vector space but infinite dimensional as a $\Qq$-vector space. If $X$ is a K3 surface then the codomain of $\pi_T$ is $1$ dimensional. Therefore, as it stands, the bound \eqref{eq:intro_bound} can be underwhelming. Ideally, one should seek to recover a $\Qq$-structure on $T^r(X_p)$ containing $A^r(X_p)$ to leverage the fact that $\dim_{\Qq} \Qp = \infty$. In this way, the codomain of $\pi_T$ would have the capacity to obstruct arbitrarily many classes. We do not take this approach in this paper, but we say more about it in Section~\ref{sec:limitation}.

Instead, we make use of the Galois module structure on Tate classes to improve~\eqref{eq:intro_bound}.
A slight modification of the discussion above allows one to pass to the algebraic closures of the base fields. This allows us to compute bounds on $\dim_{\Qq} A^r(X_{\Qbar}) = \dim_{\Qq} A^r(X_{\Cc})$.
Furthermore, these bounds on $\dim_{\Qq} A^r(X_{\Cc})$ can be computed without leaving the comfort of $\H^{2r}_\crys (X_p / \Zz_p)$.
We explain how to enlarge the base field in Section~\ref{sec:enlarging base field}. In the process, we show that algebraic cycles appearing over different base fields can be obstructed independently. This is the content of Proposition~\ref{prop:even_better_bound}, which improves over \eqref{eq:intro_bound}.

\subsection{A note on using finite approximations}

A $p$-adic approximation of the Frobenius action on crystalline cohomology $\Frob \colon \H^{2r}_\crys (X_p / \Zz_p) \to \H^{2r}_\crys (X_p / \Zz_p)$ can be obtained as in~\cite{abbott-kedlaya-roe-10, costa-phd, chk}. Using this approximation, we find a vector space approximating the space of Tate cycles $T^r(X_p)$.
We work in a coordinate system that respects the Hodge structure, see Section~\ref{sec:griffiths}, and allows us to approximate the obstruction map $\pi_T$ on $T^r(X_p)$.
With careful management of precision, we can compute a rigorous upper bound on the dimension of the kernel of $\pi_T$, see Section~\ref{sec:dim Li}.

\subsection{A limitation and the need for integral structure}\label{sec:limitation}

There is a shortcoming of our approach. Even when the Tate conjecture holds, and even when $A^r(X)$ is characterized in $A^r(X_p)$ by the kernel of $\pi_A$, the inequality $\dim_\Qq A^r(X)_{\Qq} \le \dim_{\Qp} \ker \pi_T$ can be strict. This is because $A^r(X)_{\Qq}$ is a rational vector space, while the obstruction map can be irrational, see Example~\ref{ex:weak} where Proposition~\ref{prop:even_better_bound} cannot help.

This issue can be circumvented to a large extent if one can identify the image of $A^r(X_p)$ inside $T^r(X_p)$.  It is a well-known shortcoming of the Tate conjecture that, unlike the Hodge conjecture, there is no description of this image, even conjecturally.

Suppose we can compute (or approximate) any $\Zz$-lattice $\Lambda$ inside $T^r(X_p)$ that would contain $A^r(X_p)$. Using pLLL~\cite{pLLL}, the $p$-adic version of LLL~\cite{LLL}, we can get a good guess on the ``integral'' kernel of $\pi_A$ restricted to $\Lambda$. This would sacrifice rigor in return for what is most probably the correct value of $\rho^r(X_\Cc)$.

This idea can be compared to the one used in~\cite{lairez-sertoz} in the setting of complex periods. There, integral Betti cohomology serves as the $\Zz$-lattice $\Lambda$ inside the complex Betti cohomology.

In the present paper, we do not consider the problem of identifying a lattice $\Lambda \subset T^r(X_p)$. Nevertheless, we tried to set-up the theory in a way that anticipates this development. We addressed issues of torsion in integral crystalline cohomology and the state of knowledge regarding the properties of the obstruction map with integral coefficients, see Sections~\ref{sec:torsion_free_obstruction} and~\ref{sec:griffiths}.

\subsection{Previous approaches}\label{sec:literature}

Given the importance of $A^r(X)$, several techniques exist in the literature for obtaining information on $A^r(X)$ or $\rho^r(X)$ for a given $X$.

For example, the authors of~\cite{poonen-testa-vanluijk-15} provide an algorithm for surfaces conditional on the computability of the \'{e}tale cohomology with finite coefficients.
In~\cite{hassett-kresch-tschikel-13}, the authors provide an algorithm for K3 surfaces of degree 2 conditional on an effective version of the Kuga--Satake construction.
Another algorithm to compute the geometric Picard number of a K3 surface, conditional on the Hodge conjecture for $X\times X$, is presented in~\cite{charles-14}.

Often, as in the algorithms mentioned above, one obtains a lower bound for $\rho(X)$ by exhibiting divisors explicitly.
However, there is no known practical algorithm to do this in general.
Nonetheless, if $X$ has some additional structure, practical methods may arise.
For example, when $X$ is a product of curves~\cite{cmsv},
is a quotient of another variety by finite group~\cite{shioda-86},
or
is an elliptic surface~\cite{shioda-72, shioda-90}.

The specialization homomorphism of algebraic cycles $\iota \colon A^1(X) \toi A^1(X_p)$
is used frequently to compute Picard numbers of surfaces.
For instance, one may compare the lattice structure on $A^1(X_p)$ for two different primes to limit the image of~$\iota$~\cite{vanluijk-07}, or use the Artin--Tate conjecture for surfaces~\cite{kloosterman}.
One can also view $\iota$ as a morphism of two Galois modules, as was done in~\cite{elsenhans-jahnel-11a}, see also Proposition~\ref{prop:even_better_bound}.
Alternatively, when explicit elements of $A^1(X_p)$ are known, one can rely on their geometry to show that some of them cannot be lifted. This becomes a powerful tool in the absence of torsion in the cokernel of $\iota$, see~\cite{elsenhans-jahnel-11b}.

The methods outlined in the previous paragraph are strongest when the rank jump between $A^1(X)$ and $A^1(X_p)$ is at most one. However, Charles~\cite{charles-14} proved that some K3 surfaces may never admit a prime reduction where $\dim A^1(X_p) - \dim A^1(X) \le 1$, see Example~\ref{ex:real_multiplication}.

We should highlight the difficulty in computing Picard numbers of surfaces. For instance, only recently did Schütt~\cite{schutt-quintic} obtain the set of values that can be attained as the Picard number of a quintic surface. We still do not know this set for sextic surfaces. See the introduction and \S~2 of \emph{loc.\ cit.}\ for a comprehensive overview.

The papers~\cite{sertoz18, lairez-sertoz} tackle the computation of the middle Picard number of a hypersurface using complex transcendental methods. That method seems very reliable (Example~\ref{ex:database}), but proving the result of its computation can be challenging, see for instance~\cite{movasati-sertoz}.

\subsection{Overview}

In Section~\ref{sec:lifting_cycles}, we present the theoretical framework of our method. This includes an overview of the problem of lifting algebraic cycles from positive characteristic. We state the Bloch--Esnault--Kerz theorem (Theorem~\ref{thm:simplified bloch-esnault-kerz-14}) and show how to use it in computations, to constrain the liftability of certain Tate classes.
In Section~\ref{sec:computing_in_crystalline}, we recall how to effectively compute in the crystalline cohomology of a smooth hypersurface. Here, we slightly strengthen the known results to better handle torsion.
In Section~\ref{sec:algorithm}, we present the main algorithm of this paper (Algorithm~\ref{alg:main}) and clarify each computational step.
In Section~\ref{sec:examples}, we explore examples that illustrate the strengths and weaknesses of the method. We also demonstrate the usage of the code \verb|crystalline_obstruction|.

\subsection*{Acknowledgements}
We thank Bjorn Poonen for his help in initiating the project and for his guidance throughout.
We also benefited greatly from the constant support of Kiran Kedlaya.
We thank Matthias Schütt and John Voight for their valuable comments on the first version of this text.
We are grateful for the careful comments from the anonymous referees and from Avi Kulkarni to the earlier version of this article.
The first author was supported by the Simons Collaboration in Arithmetic Geometry, Number Theory, and Computation via Simons Foundation grant~550033.

\section{Lifting algebraic cycles from positive characteristic}\label{sec:lifting_cycles}

In this section, we will be working with a smooth projective variety $X$ defined over a number field $K$. The goal is to constrain the dimension of the span of algebraic cohomology classes on $X_{\overline{K}} \colonequals X\times_K \overline{K}$, for an algebraic closure $\overline{K}$ of $K$.

We begin with a review of the core concepts we will use. In Section~\ref{sec:passage_to_finite_fields} we setup notation for the passage to a finite field from $K$. We then recall the statement of Tate's conjecture for finite fields in Section~\ref{sec:tate}.

The later subsections are intended to reposition these concepts to simplify computations. In particular, we do not want to extend the base for crystalline cohomology when computing. Section~\ref{sec:enlarging base field} makes the first simplification that allows us to find ``eventual Tate classes'', see~\eqref{eq:Ttilde}. In Section~\ref{sec:improve_obstruction}, we show that the obstruction map can be studied at the level of eventual Tate classes and the obstruction map can be applied seperately on elements that appear at different levels of field extensions. The conclusion, Proposition~\ref{prop:even_better_bound}, multiplies the extent to which we can obstruct classes.

\subsection{Passage from number fields to finite fields}\label{sec:passage_to_finite_fields}

Let $K$ be a number field with ring of integers $\co_K$.
Fix an unramified prime ideal $\xp \subset \co_K$. Localizing and completing $\co_K$ at $\xp$ we get a local field $\Qxp$ and its ring of integers $\Zxp$. We will continue to write $\xp$ for the maximal ideal of $\Zxp$. Let $\k$ be the residue field at $\xp$ and $p$ the characteristic of $\k$.

The ring of Witt vectors $W(\k)$ of $\k$ and $\Zxp$ are canonically isomorphic. In fact, for some $n$ there is an isomorphism $\k \simeq \Ff_{p^n}$ and $W(k)$ is determined uniquely up to isomorphism as the unramified extension of degree $n$ of the ring of $p$-adic integers $\Zz_p$.

\subsubsection{Good reduction.} With $X \to \spec K$ a smooth projective variety, assume that $\xp\subset \co_K$ is chosen so that that $X$ has \emph{good reduction} at $\xp$. That is, we assume that there is a regular scheme $\cx \to \spec \Zxp$, smooth over the base, such that $X_{\Qxp} \colonequals X \times_K \Qxp$ is identified with the generic fiber $\cx \times_{\Zxp} \Qxp$.  We will write $X_{\xp}$ for the special fiber of $\cx$, and $\overline{X}_{\xp} \colonequals X_{\xp} \times_k \overline{k}$ for its base change to an algebraic closure $\overline{k}$ of $k$.

We will also use ``thickenings'' of $X_\xp$, namely $\cx_n \colonequals \cx \otimes_{\Zxp} (\Zxp/\xp^n)$ for each $n\ge 1$. Let $\widehat \cx$ be the formal scheme obtained by completing $\cx$ along the special fiber $X_\xp$.

\subsubsection{The specialization map on subvarieties} For any $r\ge 0$ and a reduced scheme $Y$, the algebraic cycle group $\cz^r(Y)$ of $Y$ is the free abelian group generated by codimension $r$ subvarieties of $Y$. When $Y$ is a formal scheme, we use $\cz^r(Y)$ to denote formal subschemes of codimension $r$. Let us write $\cz^r(\cx)' \subset \cz^r(\cx)$ to be the free abelian group generated by subvarieties in $\cx$ that are flat over the base $\Zz_\xp$.

There is an isomorphism $\cz^r(X_{\Qxp}) \to \cz^r(\cx)'$ which maps each subvariety $V \subset X$ to its closure in $\cx$. The inverse of this map is given by intersecting a flat variety in $\cx$ with the generic fiber $X_{\Qxp}$. There is also a restriction map $\cz^r(\cx)' \to \cz^r(X_\xp)$ obtained by intersecting a subvariety of $\cx$ by the special fiber $X_\xp$. Composing the isomorphism above with the restriction map gives the \emph{specialization map}~\cite[\S 20.3]{fulton--intersection_theory}
\begin{equation}
  \mathrm{sp} \colon \cz^r(X_{\Qxp}) \to \cz^r(X_\xp).
\end{equation}

Let us point out that the specialization map factors through $\cz^r(\widehat\cx)$,
\begin{equation}\label{eq:factor_sp}
  \cz^r(X_{\Qxp}) \to \cz^r(\widehat \cx) \to \cz^r(X_\xp),
\end{equation}
though the first map does not need to be surjective.

\subsubsection{A Hodge filtration on crystalline cohomology}
For each $r\ge 0$, the de~Rham cohomology $\H^r_{\dR}(\cx/\Zxp)$ of $\cx$ comes with the Hodge filtration $\F^\bullet \H^r_{\dR}(\cx/\Zxp)$.
Using the Berthelot comparison isomorphism~\cite{berthelot-97, shiho-02} (see also Section~\ref{sec:crystalline_to_dR}),
\begin{equation}\label{eq:berthelot_iso_simple}
  \Phi \colon \H^r_{\dR}(\cx/\Zxp) \isoto \H^r_{\crys}(X_\xp/\Zxp),
\end{equation}
we can carry the Hodge filtration over to the crystalline cohomology of $X_\xp$. Let us point out that the resulting filtration is not intrinsic to $X_\xp$ but depends on the model $\cx/\Zxp$. The filtration modulo $\xp$ is, however, intrinsic to~$X_\xp$.

\begin{definition}
  \label{def:induced_filtration}
  Let $\F_{\cx}^\bullet \H^r_{\crys}(X_\xp/\Zxp)$ denote the filtration induced by the Hodge filtration on de~Rham cohomology carried over by the comparison isomorphism~\eqref{eq:berthelot_iso_simple}.
\end{definition}

\subsubsection{Cycle class maps}\label{sec:cycle_class_maps}

There are cycle class maps $c_{\dR}$~\cite{hartshorne--deRham} and $c_{\crys}$~\cite{gillet-messing--cycle_classes} (see also~\cite[\S 8]{bloch-esnault-kerz-14} for a review of the crystalline cycle map with integral coefficients) making the following diagram commutative:

\begin{equation}\label{eq:cycle_class_maps}
  \begin{tikzcd}
    \cz^r(X) \arrow[r,"\sim"]\arrow[d,"c_{\dR}"] & \cz^r(\cx){'} \arrow[r] \arrow[d] & \cz^r(X_\xp) \arrow[d,"c_{\crys}"] \\
    \H^{2r}_{\dR}(X/\Qxp) & \H^{2r}_{\dR}(\cx/\Zxp) \arrow[l] \arrow[r,"\sim","\Phi"'] & \H^{2r}_{\crys}(X_\xp/W)
  \end{tikzcd}
\end{equation}
We define the middle vertical arrow so as to make the square on the right commute.
The map $\Phi$ on the bottom right is the Berthelot comparison map~\eqref{eq:berthelot_iso_simple}. The map on the bottom left is the composition:
\begin{equation}
  \H^{2r}_{\dR}(\cx/\Zxp) \to \H^{2r}_{\dR}(\cx/\Zxp)\otimes_{\Zxp} \Qxp \isoto \H^{2r}_{\dR}(X/\Qxp).
\end{equation}

\begin{definition}
  The image of the cycle class maps, in the appropriate cohomology, will be denoted with $A^r(X)$ and $A^r(X_\xp)$. Their tensor with $\Qq$ are denoted by $A^r(X)_{\Qq}$ and $A^r(X_\xp)_{\Qq}$, respectively.
  The elements in the image of the cycle class maps, or elements in their $\Qq$-span, will be called \emph{algebraic cohomology classes}.
\end{definition}

\begin{lemma}\label{lemma:iota}
  The maps above give an injective homomorphism $\iota \colon A^r(X)_{\Qq} \toi A^r(X_{\xp})_{\Qq}$.
\end{lemma}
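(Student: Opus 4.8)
The claim is that the diagram~\eqref{eq:cycle_class_maps} induces a well-defined injective map $\iota\colon A^r(X)_\Qq \toi A^r(X_\xp)_\Qq$. Let me sketch how I would argue this.

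First, the well-definedness. By construction, the top row of~\eqref{eq:cycle_class_maps} gives a composition $\cz^r(X)\xrightarrow{\sim}\cz^r(\cx)'\to\cz^r(X_\xp)$, namely the specialization map $\mathrm{sp}$, which sends a subvariety to the special fiber of its closure in $\cx$. I would define $\iota$ on algebraic classes by $\iota(c_\dR(Z)) \colonequals c_\crys(\mathrm{sp}(Z))$, then tensor with $\Qq$. For this to make sense I need to check that whenever $c_\dR(Z) = c_\dR(Z')$ in $\H^{2r}_\dR(X/\Qxp)$ (or more generally a $\Qq$-linear relation holds among cycle classes), the corresponding relation holds among the $c_\crys(\mathrm{sp}(Z))$. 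This is exactly what the commutativity of~\eqref{eq:cycle_class_maps} provides: the two outer vertical maps $c_\dR$ and $c_\crys$ both factor through the integral de~Rham cohomology $\H^{2r}_\dR(\cx/\Zxp)$ via the maps along the bottom row, and the map $\H^{2r}_\dR(\cx/\Zxp)\to\H^{2r}_\dR(X/\Qxp)$ followed by the identification, together with $\Phi$, relate the two. Concretely, a cycle $\zeta = \sum a_i Z_i$ with $\sum a_i c_\dR(Z_i)=0$ in $\H^{2r}_\dR(X/\Qxp)$ lifts (after clearing denominators, so $a_i\in\Zz$) to a class $\sum a_i \tilde c(\overline{Z_i}) \in \H^{2r}_\dR(\cx/\Zxp)$ whose image in $\H^{2r}_\dR(\cx/\Zxp)\otimes\Qxp \cong \H^{2r}_\dR(X/\Qxp)$ vanishes; hence this lifted class is torsion, so some integer multiple of it is zero in $\H^{2r}_\dR(\cx/\Zxp)$; applying $\Phi$ and using commutativity of the right square shows $\sum a_i c_\crys(\mathrm{sp}(Z_i))$ is torsion in $\H^{2r}_\crys(X_\xp/W)$, hence zero after tensoring with $\Qq$. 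This gives a well-defined $\Qq$-linear map $\iota\colon A^r(X)_\Qq \to A^r(X_\xp)_\Qq$; $\Qq$-linearity and the fact that it lands in $A^r(X_\xp)_\Qq$ are immediate from the construction.

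Second, injectivity. Suppose $\iota(\alpha)=0$ for some $\alpha\in A^r(X)_\Qq$; write $\alpha = c_\dR(\zeta)$ with $\zeta = \sum a_i Z_i$, $a_i\in\Zz$ after scaling. Then $\sum a_i c_\crys(\mathrm{sp}(Z_i))$ is torsion in $\H^{2r}_\crys(X_\xp/W)$. Transporting back through $\Phi$ and the right square of~\eqref{eq:cycle_class_maps}, the lift $\sum a_i \tilde c(\overline{Z_i})\in \H^{2r}_\dR(\cx/\Zxp)$ is torsion (here one uses that $\Phi$ is an isomorphism, so it preserves and reflects torsion). A torsion element of $\H^{2r}_\dR(\cx/\Zxp)$ maps to zero in $\H^{2r}_\dR(\cx/\Zxp)\otimes_{\Zxp}\Qxp$, and the latter is identified with $\H^{2r}_\dR(X/\Qxp)$; under this identification the image of $\sum a_i\tilde c(\overline{Z_i})$ is $\sum a_i c_\dR(Z_i) = \alpha$ (up to the scaling factor). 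Hence $\alpha = 0$ in $\H^{2r}_\dR(X/\Qxp)$, and since $A^r(X)_\Qq$ is by definition a subspace of this cohomology, $\alpha = 0$ there too. This proves $\iota$ is injective.

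The main obstacle I anticipate is not any single deep input but rather the bookkeeping around \textbf{integral torsion}: one must be careful that the middle vertical map in~\eqref{eq:cycle_class_maps}, defined only so as to make the right square commute, genuinely relates $c_\dR$ and $c_\crys$ after inverting $p$, and that "becomes zero rationally $\Rightarrow$ is torsion integrally" is applied in the right group each time. The cleanest route is to phrase the whole argument after tensoring with $\Qxp$: the bottom row of~\eqref{eq:cycle_class_maps} then becomes an isomorphism $\H^{2r}_\dR(X/\Qxp) \xleftarrow{\sim} \H^{2r}_\dR(\cx/\Zxp)\otimes\Qxp \xrightarrow{\sim}\H^{2r}_\crys(X_\xp/W)\otimes\Qxp$, under which $c_\dR$ and $c_\crys$ (rationally) correspond, and then $\iota$ is simply the restriction of this isomorphism to the $\Qq$-subspace $A^r(X)_\Qq$, which is automatically injective. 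The only thing to verify is that this restriction lands in $A^r(X_\xp)_\Qq$, which is exactly the compatibility of cycle classes with specialization recorded by~\eqref{eq:cycle_class_maps}. I would present the $\Qxp$-linearized version as the main line and relegate the torsion remarks to a sentence.
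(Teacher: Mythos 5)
Your proposal is correct and, in its streamlined form (tensor everything with $\Qq$ so the bottom row of~\eqref{eq:cycle_class_maps} becomes a chain of isomorphisms and $\iota$ is the restriction of an isomorphism to $A^r(X)_{\Qq}$), it is exactly the paper's argument; the paper likewise invokes commutativity of the diagram after tensoring with $\Qq$ and the fact that the bottom horizontal arrows are then isomorphisms. Your preliminary torsion bookkeeping is harmless but unnecessary once one works rationally, as you yourself note.
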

\begin{proof}
  The cycle class maps are compatible in the sense that the diagram~\eqref{eq:cycle_class_maps} commutes after tensoring with $\Qq$.
  Furthermore, the horizontal bottom arrows are all isomorphisms once tensored with $\Qq$, and thus we have injectivity.
\end{proof}

\begin{remark}
  In fact, even without tensoring with $\Qq$, we can obtain an injection. The specialisation map $A^r(X_{\overline{K}}) \to A^r(\overline{X}_{\xp})$ preserves the intersection pairing, see~\cite[Corollary~20.3]{fulton--intersection_theory}. Using that the polarization maps to the polarization, and using the Hodge index theorem on $A^r(X_{\overline{K}})$, we conclude that no element can map to zero.
\end{remark}

\subsubsection{Dimensions of the space of algebraic cycles over different fields}
Let $\rho^r_{\betti}(X_\Cc)$ denote the dimension of the space of codimension $r$ algebraic cycles $A^r(X_\Cc)$ in the Betti cohomology  $\H^{2r}_{\betti}(X_\Cc,\Cc)$ of the associated complex manifold.

Let $L/K$ be a field extension of $K$. Define $\rho^r_{\dR}(X_{L})$ as the dimension of the algebraic cycles in the \emph{de~Rham} cohomology of $X_{L} \colonequals X \times_K L$.

Let $\overline{K} \subset \Cc$ be the algebraic closure of $K \toi \Cc$. Let $K \toi \Qxp$ be a localization as in Section~\ref{sec:passage_to_finite_fields} and let $\Qxpbar$ be an algebraic closure of $\Qxp$. Then we have the following series of equalities:
\begin{equation}
  \rho^r_{\betti}(X_\Cc) = \rho^r_{\dR}(X_\Cc) = \rho^r_{\dR}(X_{\overline{K}}) = \rho^r_{\dR}(X_{\Qxpbar}).
\end{equation}
The first equality follows from the standard comparison isomorphisms. The other equalities result from the following well-known fact.

\begin{lemma}\label{lem:rhor_over_CC_and_Qbar}
 Suppose $K$ is algebraically closed and characteristic zero. Let $L/K$ be a field extension with $L$ algebraically closed. Let $X_K/K$ be a smooth projective variety and $X_L$ its pullback to $L$. Then for every $r\ge 0$, $\rho^r_{\dR}(X_K) = \rho^r_{\dR}(X_L)$.
\end{lemma}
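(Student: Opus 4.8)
The plan is to show that the dimension of the space of algebraic cycles in de~Rham cohomology does not change under an extension of algebraically closed fields of characteristic zero. The key observation is that a cycle class, being a finite-type object, is already defined over a finitely generated subfield, so that we can reduce the general extension $L/K$ to a transcendental extension by a single variable, or more efficiently invoke a specialization argument.

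First I would record the standard fact that algebraic de~Rham cohomology commutes with flat base change: for any field extension $L/K$ (both of characteristic zero) one has a natural isomorphism $\H^{2r}_{\dR}(X_K/K) \otimes_K L \isoto \H^{2r}_{\dR}(X_L/L)$, and this isomorphism is compatible with the cycle class maps $c_{\dR}$ of Section~\ref{sec:cycle_class_maps}; in particular, if $Z \subset X_K$ is a codimension $r$ subvariety, the class of its base change $Z_L \subset X_L$ is the image of $c_{\dR}(Z)$ under this isomorphism. From this, an $L$-linear independent set of classes pulled back from $K$ stays $L$-independent, and conversely a $K$-independent set stays $K$-independent, so $\rho^r_{\dR}(X_K) \le \rho^r_{\dR}(X_L)$ is automatic and we only need the reverse inequality.

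For the reverse inequality, suppose $c_{\dR}(Z_1), \dots, c_{\dR}(Z_m)$ are classes of subvarieties $Z_i \subset X_L$ that are linearly independent in $\H^{2r}_{\dR}(X_L/L)$. Each $Z_i$ is cut out by finitely many equations with coefficients in $L$, so there is a finitely generated $K$-subalgebra $R \subset L$ and subschemes $\mathcal{Z}_i \subset X_R \colonequals X_K \times_K \Spec R$, flat over $\Spec R$, whose fibers over the generic point recover the $Z_i$. Shrinking $\Spec R$ if necessary (a nonempty open subset still has a $K$-point, since $K$ is algebraically closed — this is where we use that $K$ is algebraically closed rather than merely infinite, via the Nullstellensatz), we may pick a closed point $s \in \Spec R$ with residue field $K$. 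The fibers $\mathcal{Z}_{i,s} \subset X_K$ are codimension $r$ subvarieties (or cycles) of $X_K$, and by the compatibility of cycle classes with specialization in a flat family — the classes $c_{\dR}(\mathcal{Z}_{i,s})$ arise from $c_{\dR}(\mathcal{Z}_i)$ as a member of a flat family, hence in a locally constant way in de~Rham cohomology with the trivial $R$-local system — their linear independence over $L$ propagates to linear independence of $c_{\dR}(\mathcal{Z}_{i,s})$ over $K$. Hence $\rho^r_{\dR}(X_K) \ge m$, and taking the supremum over all such $m$ gives $\rho^r_{\dR}(X_K) \ge \rho^r_{\dR}(X_L)$.

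The main obstacle is making the specialization step precise: one must verify that linear independence is an open condition in the family and that it survives passing to a $K$-rational fiber. The clean way to handle this is to choose a $K$-basis $e_1, \dots, e_N$ of $\H^{2r}_{\dR}(X_K/K)$, express each $c_{\dR}(\mathcal{Z}_i)$ in the induced $R$-basis of $\H^{2r}_{\dR}(X_R/R) \cong \H^{2r}_{\dR}(X_K/K)\otimes_K R$ — here one uses that $\H^{2r}_{\dR}(X_R/R)$ is a free $R$-module of rank $N$ and base change holds, which follows from Hodge-to-de~Rham degeneration and flatness, or more elementarily from the fact that everything is already defined over a field — and then observe that the $m \times N$ matrix of coefficients has a nonvanishing $m \times m$ minor over $L$, hence over $R$ after inverting that minor; specializing at a $K$-point where the minor is nonzero preserves independence. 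An alternative, perhaps cleaner, route avoids the family entirely: embed $K \toi L$, pick any finitely generated $K$-subextension $K' \subset L$ containing the coefficients of the $Z_i$, take a $K$-point of a model of $K'$ over $K$, and argue as above; or, in characteristic zero, use that $L$ is a directed union of finitely generated field extensions of $K$, each of which is the function field of a $K$-variety with a dense set of $K$-points, reducing to the transcendence-degree-one case by induction. Either way, the content is entirely the interplay of base change for de~Rham cohomology with the Nullstellensatz.
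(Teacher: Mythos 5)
Your argument is correct, and it rests on the same core mechanism as the paper's sketch: spread the cycles $Z_i \subset X_L$ out over a finitely generated $K$-algebra $R \subset L$, then specialize at a $K$-rational point of $\Spec R$, which exists because $K$ is algebraically closed. Where you differ is in how survival of linear independence is justified. The paper factors $c_{\dR}$ through cycles modulo algebraic equivalence and uses that this group does not change under extension of algebraically closed fields: after base change to $L$, the fiber $\mathcal{Z}_{i,s}\times_K L$ and $Z_i$ are fibers of a single flat family over a connected base, hence algebraically (so homologically) equivalent, and injectivity of $\H^{2r}_{\dR}(X_K/K)\otimes_K L \to \H^{2r}_{\dR}(X_L/L)$ finishes the count with no relative cycle class and no minor argument. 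You instead use the relative class $c_{\dR}(\mathcal{Z}_i)\in \H^{2r}_{\dR}(X_R/R)\cong \H^{2r}_{\dR}(X_K/K)\otimes_K R$ together with an openness-of-independence (nonvanishing minor) argument; this works, but the step you yourself flag as the main obstacle --- that the relative de~Rham cycle class specializes to the class of the fiber --- is precisely the deformation-invariance input that the paper's algebraic-equivalence bookkeeping obtains for free, so a complete write-up along your lines should either cite a construction of relative cycle classes compatible with base change (Gysin or Chern-character constructions) or fall back on the equivalence-class argument. Two smaller points: your remark that Hodge-to-de~Rham degeneration is unnecessary is right (flatness of $R$ over the field $K$ already gives base change), and your phrase about ``the trivial $R$-local system'' should really be the horizontality of cycle classes for the Gauss--Manin connection, which again is the same invariance statement in disguise.
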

\begin{proof}[Sketch of proof]
  In characteristic zero, the cycle class map $c_{\dR}$ is well defined, and it factors through the Chow groups $\CH^r$ (the algebraic cycle group $\cz^r$ modulo algebraic equivalence). But the Chow groups of $X_K$ and $X_L$ are canonically isomorphic.

  The inclusion $\CH^r(X_K) \toi \CH^r(X_L)$ is induced by pulling back varieties in $X_K$ to $X_L$. In the reverse direction, take a subvariety $V \subset X_L$. Without loss of generality, we may assume $L$ is the field of definition of $V$ over $K$. In particular, $L$ now has finite transcendence degree over $K$. We can find an affine $K$-variety $Y$ with function field $L$ over which $V$ extends to a flat family of varieties $\cv \to Y$ so that $\cv \toi X_K \times_K Y$ is a closed immersion. Any fiber of $\cv \to Y$ over a $K$-valued point defines an element in $\CH^r(X_K)$.
\end{proof}

\subsubsection{The obstruction map.}\label{sec:obstruction_map} The goal here is to give a partial description of the image of the inclusion $\iota \colon A^r(X)_{\Qq} \toi A^r(X_{\xp})_{\Qq}$. That is the image of the composed map $\cz^r(X) \to \cz^r(X_\xp) \to \H^{2r}_{\crys}(X_\xp/\Zxp)$. The following theorem instead allows us to describe the image of the composed map $\cz^r(\cxhat) \to \cz^r(X_\xp) \to \H^{2r}_{\crys}(X_\xp/\Zxp)$ (see also~\eqref{eq:factor_sp}).

\begin{definition}
  An algebraic cycle $\xi \in A^r(X_\xp)$ is \emph{unobstructed} (with respect to $\cx$) if $\xi \in \F^r_\cx\H^{2r}_{\crys}(X_\xp/\Zxp)$, with the filtration defined in~Definition~\ref{def:induced_filtration}.
  A cycle is \emph{obstructed} otherwise.
\end{definition}

Note that unobstructed cycles form a subgroup. The terminology is justified by the following theorem of Bloch, Esnault, and Kerz, which states that modulo torsion, the group of unobstructed cycles, can be lifted to $\cxhat$.

\begin{theorem}[{\cite[Theorem~1.3]{bloch-esnault-kerz-14}}]
  \label{thm:simplified bloch-esnault-kerz-14}
  Suppose $\cx/\Zz_\xp$ is smooth and projective. For each $r\ge 0$ and $p = \chr \k > \dim X_\xp + 6$ the image of the composition $\cz^r(\cxhat)\otimes \Qq \to \cz^r(X_\xp) \otimes \Qq \tos A^r(X_\xp)_{\Qq}$ coincides with the group of unobstructed cycles.
\end{theorem}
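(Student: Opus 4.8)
The statement is a reformulation of \cite[Theorem~1.3]{bloch-esnault-kerz-14}, so the plan is to spell out the dictionary between their hypotheses and conclusion and ours; no new argument is needed. First I would fix notation: set $W \colonequals \Zxp$, which is the ring of Witt vectors $W(\k)$ of the residue field $\k$ (finite, hence perfect, as required in \cite{bloch-esnault-kerz-14}); observe that $\cxhat$ is the $\xp$-adic formal completion of $\cx$, equivalently its completion along the special fiber $X_\xp$, and that the thickenings $\cx_n = \cx\otimes_W(W/\xp^n)$ are the ones appearing in \emph{loc.\ cit.} Bloch--Esnault--Kerz work with exactly this smooth proper formal scheme over $W$ and, after tensoring with $\Qq$, identify the image of the cycle-class map from cycles on $\cxhat$ into $\H^{2r}_{\crys}(X_\xp/W)$ with the subspace of classes lying in the Hodge filtration $\F^r$. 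The hypothesis that $p$ be large relative to $\dim X_\xp$ — which we have made explicit as $p > \dim X_\xp + 6$ — is what makes the integral crystalline cycle-class map and the compatibilities reviewed in \cite[\S 8]{bloch-esnault-kerz-14} available, together with the filtered form of the Berthelot comparison invoked below.

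Next I would carry out three identifications. (i) By definition $A^r(X_\xp)_{\Qq}$ is the image of $c_{\crys}\colon\cz^r(X_\xp)\otimes\Qq\to\H^{2r}_{\crys}(X_\xp/W)$, and the composition in the statement is just ``reduce a formal cycle to the special fiber, then apply $c_{\crys}$''; so the group to be described is the image inside $A^r(X_\xp)_{\Qq}$ of the cycles on $\cxhat$. (ii) The crystalline cycle class is additive and insensitive to rational equivalence, so whichever variant of the group of codimension-$r$ cycles on $\cxhat$ is used in \cite{bloch-esnault-kerz-14} — honest formal subschemes as in $\cz^r(\cxhat)$, or a formal Chow group — the resulting image in $\H^{2r}_{\crys}(X_\xp/W)$ after $\otimes\Qq$ is the same. (iii) For the filtration: the crystalline cohomology $\H^{2r}_{\crys}(X_\xp/W)$ is canonically identified with $\H^{2r}_{\dR}(\cx/W)$ through the Berthelot comparison \eqref{eq:berthelot_iso_simple} (using that $\cx$ is proper over the complete ring $W$), and \cite{bloch-esnault-kerz-14} put on $\H^{2r}_{\crys}(X_\xp/W)$ precisely the image of $\F^r\H^{2r}_{\dR}(\cx/W)$; by Definition~\ref{def:induced_filtration} this is exactly $\F^r_{\cx}\H^{2r}_{\crys}(X_\xp/W)$, so ``lies in $\F^r$'' and ``unobstructed'' name the same condition.

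With the dictionary in place, one inclusion is essentially formal: a formal cycle on $\cxhat$ has crystalline class in $\H^{2r}_{\crys}(\cxhat/W)$, which lands in the $\F^r$ piece of $\H^{2r}_{\dR}(\cx/W)$ because cycle classes always sit in the expected Hodge-filtration level, and this is carried to $\F^r_{\cx}\H^{2r}_{\crys}(X_\xp/W)$ by construction — so the image is contained in the unobstructed cycles. The reverse inclusion, that every unobstructed class is rationally the crystalline class of a formal cycle, is the content of \cite[Theorem~1.3]{bloch-esnault-kerz-14}, which we simply cite; its proof, via topological cyclic homology and the comparison with crystalline and de~Rham cohomology, is internal to that paper and we treat it as a black box.

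The one point I expect to require care is the passage between integral and rational coefficients. To know that the $\Qq$-span of the unobstructed cycles really equals $\{\xi\in A^r(X_\xp)_{\Qq} : \xi\in\F^r_{\cx}\H^{2r}_{\crys}(X_\xp/W)\otimes\Qq\}$, one wants $\F^r_{\cx}$ to be saturated in $\H^{2r}_{\crys}(X_\xp/W)$, so that a rational class landing in $\F^r_{\cx}\otimes\Qq$ is a $\Qq$-combination of honest unobstructed cycles; for the smooth hypersurfaces we ultimately apply this to, this is harmless, and in general it is precisely the sort of torsion subtlety treated in Section~\ref{sec:torsion_free_obstruction}. So the genuine mathematical difficulty lies entirely within \cite{bloch-esnault-kerz-14}; on our side the task is bookkeeping — matching the filtration $\F^r_{\cx}$ to theirs and tracking coefficients.
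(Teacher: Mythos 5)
Your proposal is correct and matches what the paper does: Theorem~\ref{thm:simplified bloch-esnault-kerz-14} is stated purely as a citation of \cite[Theorem~1.3]{bloch-esnault-kerz-14}, with no independent argument beyond the implicit translation of notation (identifying $\Zxp$ with $W(\k)$, $\cxhat$ with the formal completion along $X_\xp$, and the filtration $\F^r_{\cx}$ of Definition~\ref{def:induced_filtration} with the one used in \emph{loc.\ cit.}\ via the Berthelot comparison). Your dictionary and black-box citation are exactly that translation, so nothing further is needed.
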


\begin{definition}\label{def:obstruction_map}
  The \emph{obstruction map} of $X_\xp$ with respect to $\cx$ is the quotient map
  \[
    \pi_{\cx} \colon  \H_{\crys}^{2r}(X_\xp/\Zxp) \to \H_{\crys}^{2r}(X_\xp/\Zxp)/\F_\cx^r \H_{\crys}^{2r}(X_\xp/\Zxp).
  \]
  If it is clear from context, we may tensor $\pi_{\cx}$ with $\Qxp$ without changing notation. Restrictions of $\pi_{\cx}$ to a subspace $V$ of cohomology will be abbriviated to $\pi_{V}$.
\end{definition}

\begin{remark}
  If we are given $X_\xp$ but not $X$ the model $\cx/\Zxp$ is not unique.
  The choice of a model $\cx/\Zxp$ impacts which cycles are obstructed.
\end{remark}

\begin{remark}
  A theorem of this nature --- in particular, the ability to define the Chern class map for line bundles --- was the original motivation for the definition of crystalline cohomology~\cite[\S 7.4]{grothendieck--crystals}. For line bundles, this goal was realized by Berthelot and Ogus in~\cite{berthelot-ogus}.
\end{remark}

\subsection{Finding the image of the Chern class map via Tate's conjecture}\label{sec:tate}

Due to its computational complexity, we would like to avoid computing and representing actual subvarieties in $X$ or even $X_{\xp}$ as much as possible. On the other hand, the $\Qxp$-span of algebraic cycles in cohomology has, at least conjecturally, a computationally tractable description as \emph{Tate classes}~\cite{tate-bsd-66, milne-07}.

Recall that there is the arithmetic Frobenius map $X_\xp \to X_\xp$ over $\k$.
As crystalline cohomology is functorial, the relative Frobenius map induces a map on cohomology:
\begin{equation}\label{eq:frob}
  \Frob_{\k} \colon \H_{\crys}^{2r}(X_\xp/\Zxp) \to \H_{\crys}^{2r}(X_\xp/\Zxp).
\end{equation}

\begin{definition}
  The \emph{integral Tate classes} are the $q^{-r}\Frob_\k$-fixed elements of crystalline cohomology, they form $T^r(X_\xp)_{\integral} \subset \H_{\crys}^{2r}(X_\xp/\Zxp)$.
  The fixed elements of $q^{-r}\Frob_{\k}$ in $\H_{\crys}^{2r}(X_\xp/\Zxp) \otimes \Qxp$ are \emph{Tate classes} and they form the space $T^r(X_{\xp})$.
\end{definition}

\begin{conjecture}[Tate Conjecture]
  \label{conj:Tate conjecture}
  For each $r$, algebraic cycles span $T^r(X_{\xp})$, that is, $A^r(X_{\xp})\otimes_{\Zz} \Qxp \simeq T^r(X_\xp)$.
\end{conjecture}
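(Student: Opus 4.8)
This is one of the major open conjectures of arithmetic geometry, so rather than a proof I describe the one inclusion that is unconditional, sketch the strategy by which the reverse inclusion is established in the cases where it is known, and point to the genuine obstruction.

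The containment $A^r(X_\xp)_{\Qq}\otimes_{\Qq}\Qxp \hookrightarrow T^r(X_\xp)$ is unconditional: if $V\subset \overline{X}_\xp$ is an integral codimension-$r$ subvariety defined over a finite extension $\k'/\k$ with $q'=\#\k'$, then functoriality of the crystalline cycle class map together with its Frobenius-equivariance up to the Tate twist show that $\Frob_{\k'}$ acts on $c_{\crys}(V)$ by ${q'}^{\,r}$, so ${q'}^{-r}\Frob_{\k'}$ fixes it and $c_{\crys}(V)\in T^r(X_\xp)$. Passing to $\Qxp$-spans gives the inclusion, and it is injective because $A^r(X_\xp)$ is by definition a subgroup of $\H^{2r}_{\crys}(X_\xp/\Zxp)$. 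Thus the entire content of the conjecture is surjectivity: that every $q^{-r}\Frob_\k$-fixed class in $\H^{2r}_{\crys}(X_\xp/\Zxp)\otimes\Qxp$ is a $\Qxp$-combination of cycle classes.

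To attack surjectivity one proceeds case by case, since no uniform argument is known. First I would reduce to divisors ($r=1$) whenever possible: via products of curves, hyperplane sections, and compatibility of the cycle class map with Gysin maps and intersection products, a higher-codimension Tate class can often be transported to a divisor class on an auxiliary variety, and $r=1$ is the only regime with a robust toolkit. For divisors the plan is then (a) reformulate $T^1(X_\xp)$, via the Kummer sequence in $\ell$-adic cohomology, as the assertion that $\NS(\overline{X}_\xp)\otimes\Qq_\ell$ spans the Galois-fixed part of $\H^2_{\et}(\overline{X}_\xp,\Qq_\ell(1))$ (using that the $\ell$-adic and crystalline formulations are known to be equivalent), in the spirit of Tate's original argument; (b) when $X_\xp$ has additional structure, in particular when it is a K3 surface, transfer the problem to an auxiliary abelian variety through the Kuga--Satake construction and apply Tate's theorem on homomorphisms of abelian varieties over finite fields to produce the missing divisors, which is precisely the route of the works cited above; (c) carry the resulting classes back into $\H^{2r}_{\crys}$ using compatibility of the étale and crystalline cycle class maps with the comparison isomorphisms.

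The main obstacle --- and the reason the statement is still a conjecture --- is step (b) in any generality: there is no construction, not even a conjectural one, producing an algebraic cycle from a Frobenius-fixed cohomology class, unlike on the Hodge side where absolute and motivated cycles at least organize the search. Furthermore, even granting the Tate conjecture over $\overline{\k}$, descending a newly produced class down to $X_\xp$ requires controlling the Galois action on finitely many classes, which is where a hypothesis pinning down their exact field of definition would enter. I therefore do not prove the statement; it is recorded here only because in the sequel the computable space $T^r(X_\xp)$ serves as an upper-bound proxy for the otherwise inaccessible $A^r(X_\xp)$.
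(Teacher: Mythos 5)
This statement is recorded in the paper as a conjecture, with no proof offered (only the remark that the inclusion $A^r(X_\xp)\subset T^r(X_\xp)$ always holds and citations for the known cases of abelian varieties and K3 surfaces), and you correctly treat it the same way: your unconditional argument for the inclusion via Frobenius-equivariance of the crystalline cycle class map is sound and matches the paper's stance, while the surjectivity you rightly leave open. Nothing further is required here.
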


\begin{remark}
  The stronger statement $A^r(X_{\xp})\otimes_{\Zz} \Zxp = T^r(X_\xp)_{\integral}$ is called the \emph{integral Tate conjecture}. This version is often false due to torsion, though it may be true modulo torsion. In any case, when $r=1$, then the usual Tate conjecture implies the integral Tate conjecture~\cite{milne-07}.
\end{remark}

\begin{remark}
  Tate himself proved the Tate conjecture for abelian varieties over finite fields~\cite{tate-66}. More recently, the Tate conjecture was proven for K3 surfaces~\cite{charles-14, madapusi-13, kim-madapusi-16}.
For an overview of the Tate conjecture over finite fields, as well as recent progress on it, we recommend~\cite{milne-07, totaro-17}.
\end{remark}

\subsection{Enlarging the base field}\label{sec:enlarging base field}

Given the smooth projective variety $X/K$ we want to compute the dimension of $A^r(X_{\overline{K}})$, with $\overline{K}$ an algebraic closure of $K$. If $X_\xp$ is our choice of finite reduction, it is likely that $A^r(X_{\overline{K}})$ will not map into $T^r(X_{\xp})$ and we need to enlarge the residue field $\k$. The dimension of the space of Tate classes will increase as we enlarge $\k$ but will eventually stop growing. This terminal dimension, as well as the terminal dimension of the unobstructed Tate cycles, can be computed without ever enlarging the base field. We will explain how to do this here, paying attention to computational limitations.

Let $\chi_{2r} \in \Qq[t]$ be the characteristic polynomial of $q^{-r} \Frob_{\k}$, a scaling of the Frobenius map~\eqref{eq:frob}.
Factorize this characteristic polynomial over $\Qq[t]$ as follows
\begin{equation}
\label{eqn:Qfact}
      \chi_{2r}(t) = h(t) \prod_i \Phi_{i} (t)^{\gamma_i},
\end{equation}
where, for each $i$, the polynomial $\Phi_{i}(t) \in \Zz[t]$ is the $i$-th cyclotomic polynomial, i.e., the minimal polynomial of any primitive $i$-th root of unity, the exponents $\gamma_i \ge 0$, and no root of $h(t)$ is a root of unity. Set $u \colonequals \operatorname{lcm}\{i \mid \gamma_i \neq 0\}$.

Consider now the following subspace of $\H_{\crys}^{2r}(X_\xp/\Zxp)\otimes \Qxp$,
\begin{equation}\label{eq:Ttilde}
\begin{aligned}
\Ttilde^r(X_{\xp})  \colonequals & \ker\left( \Frob_\k^u - q^{ur} \right).
\end{aligned}
\end{equation}
Elements of $\Ttilde^r(X_{\xp})$ may be viewed as ``eventual Tate classes'' as they will span the Tate classes when the base field $\k$ is enlarged, as its dimension will match the rank of $A\bigl((X_\xp)_{\overline{k}}\bigr)$, as shown below.

\begin{proposition}
  Using the obstruction map (Definition~\ref{def:obstruction_map}) we obtain the following bound:
  \begin{equation}
  \dim A^r(X_{\overline{K}})_{\Qq} \le \dim  \ker \pi|_{\Ttilde^r(X_{\xp})}
  \end{equation}
\end{proposition}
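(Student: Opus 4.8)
The plan is to exhibit a finite field over which every algebraic cycle of $X_{\overline K}$ is already defined, observe that the classes of these cycles are simultaneously \emph{Tate} classes and \emph{unobstructed}, and then recognise the space of Tate classes over that field as a base change of $\Ttilde^r(X_\xp)$. First I would descend to a finite extension: in characteristic zero the cycle class map factors through $\CH^r$ and $\CH^r(X_{\overline K}) = \varinjlim_{L/K} \CH^r(X_L)$ over finite subextensions (the argument in the proof of Lemma~\ref{lem:rhor_over_CC_and_Qbar}), so since $A^r(X_{\overline K})_\Qq$ is finite dimensional there is a finite $L/K$ with $A^r(X_L)_\Qq = A^r(X_{\overline K})_\Qq$. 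Picking a prime $\xq\mid\xp$ of $L$, the scheme $\cx\otimes_{\Zxp}\co_{L_\xq}$ is a smooth model of $X_{L_\xq}$ with special fibre $X_\xp\otimes_\k\k(\xq)$; after enlarging $L$ (which cannot shrink $A^r(X_L)_\Qq$) I may assume $m\colonequals[\k(\xq):\k]$ is a multiple of $u$. Writing $\k_m\colonequals\k(\xq)$, $W_m\colonequals W(\k_m)$ and $\Qq_m\colonequals W_m\otimes_\Zz\Qq$, crystalline base change gives $\H^{2r}_{\crys}(X_\xp\otimes_\k\k_m/W_m) = \H^{2r}_{\crys}(X_\xp/\Zxp)\otimes_{\Zxp}W_m$, under which the $q^m$-power Frobenius is $\Frob_\k^m\otimes\id$, the induced Hodge filtration is $\F^\bullet_\cx\otimes_{\Zxp}W_m$, and therefore the obstruction map of $X_\xp\otimes_\k\k_m$ relative to this model is $\pi_m\colonequals\pi_\cx\otimes_{\Qxp}\id_{\Qq_m}$.

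Next I would push the cycles down with the specialisation map. Composing $\mathrm{sp}$ on cycles with the comparison isomorphism yields a $\Qq$-linear map $A^r(X_{\overline K})_\Qq = A^r(X_L)_\Qq \to A^r(X_\xp\otimes_\k\k_m)_\Qq$, which is injective because its composition with $A^r(X_\xp\otimes_\k\k_m)_\Qq\toi A^r(\overline X_\xp)_\Qq$ is the pairing-preserving specialisation map of the Remark following Lemma~\ref{lemma:iota}. Let $B$ be the image, a $\Qq$-subspace of the honest $\Qq$-span $A^r(X_\xp\otimes_\k\k_m)_\Qq$, so that $\dim_\Qq A^r(X_{\overline K})_\Qq = \dim_\Qq B = \dim_{\Qq_m}(B\otimes_\Qq\Qq_m)$. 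Then I would pin down $B\otimes_\Qq\Qq_m$ from two sides. Since the crystalline cycle class map is Frobenius-equivariant up to the Tate twist, $B\otimes_\Qq\Qq_m\subseteq A^r(X_\xp\otimes_\k\k_m)\otimes_\Zz\Qq_m\subseteq T^r(X_\xp\otimes_\k\k_m)$. And each class in $B$ is the reduction of a cycle on $X_L$, hence of a cycle on $\cx\otimes_{\Zxp}\co_{L_\xq}$, so its de~Rham class lies in $\F^r$ of $\H^{2r}_{\dR}(\cx\otimes_{\Zxp}\co_{L_\xq}/\co_{L_\xq}) \cong \H^{2r}_{\crys}(X_\xp/\Zxp)\otimes_{\Zxp}\co_{L_\xq}$; since $\F^r_\cx$ is a direct summand after $\otimes\Qq$, membership in $\F^r$ can be checked after the faithfully flat extension $W_m\to\co_{L_\xq}$, so the class already lies in $\F^r_\cx\otimes_{\Zxp}W_m$ and hence $B\otimes_\Qq\Qq_m\subseteq\ker\pi_m$. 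Combining, $B\otimes_\Qq\Qq_m\subseteq\ker(\pi_m|_{T^r(X_\xp\otimes_\k\k_m)})$.

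Finally I would identify $T^r(X_\xp\otimes_\k\k_m) = \ker(\Frob_\k^m - q^{mr})$ on $\H^{2r}_{\crys}(X_\xp/\Zxp)\otimes_{\Zxp}\Qq_m$ with $\Ttilde^r(X_\xp)\otimes_{\Qxp}\Qq_m$. By the factorisation~\eqref{eqn:Qfact} the roots of unity among the eigenvalues of $q^{-r}\Frob_\k$ are exactly the primitive $i$-th roots of unity with $\gamma_i\neq0$, hence $u$-th roots of unity; as $u\mid m$, an eigenvalue $\lambda$ satisfies $\lambda^m=1$ iff it is a root of unity iff $\lambda^u=1$. On a generalised eigenspace $V_\lambda$ with $\lambda^u=1$, writing $q^{-r}\Frob_\k|_{V_\lambda}=\lambda(1+N)$ with $N$ nilpotent, both $(q^{-r}\Frob_\k)^m-1$ and $(q^{-r}\Frob_\k)^u-1$ equal $N$ times a unit (characteristic zero), so both kernels are $\ker N$, while other eigenspaces contribute nothing; hence $\ker(\Frob_\k^m-q^{mr}) = \ker(\Frob_\k^u-q^{ur})$. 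Since $\pi_m=\pi_\cx\otimes\id$ and kernels commute with the flat extension $\Qxp\to\Qq_m$, this gives $\dim_{\Qq_m}\ker(\pi_m|_{T^r(X_\xp\otimes_\k\k_m)}) = \dim_{\Qxp}\ker(\pi_\cx|_{\Ttilde^r(X_\xp)})$, and therefore $\dim_\Qq A^r(X_{\overline K})_\Qq = \dim_{\Qq_m}(B\otimes_\Qq\Qq_m) \le \dim_{\Qxp}\ker(\pi_\cx|_{\Ttilde^r(X_\xp)})$.

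The hard part is the three-ring bookkeeping $\Zxp\subseteq W_m\subseteq\co_{L_\xq}$ of the first two steps — in particular ensuring that a cycle defined only over a ramified extension of $\Qxp$ still has crystalline class in $\F^r_\cx\otimes_{\Zxp}W_m$, which is where the direct-summand property of the Hodge filtration is used. It is perhaps worth noting that, in contrast to the interpretation of the resulting bound, the bound itself does not use the Bloch--Esnault--Kerz theorem; it uses only that algebraic cycle classes are of Hodge type $(r,r)$ and are Frobenius-invariant up to the Tate twist.
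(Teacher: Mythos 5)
Your argument is correct and, at bottom, runs on the same mechanism as the paper's proof: pass to a residue extension whose degree over $\k$ is divisible by $u$, use that the Frobenius of that extension acts as the linear extension of a power of $\Frob_\k$, so that the Tate classes there are exactly the span of $\Ttilde^r(X_\xp)$, note that the obstruction map (equivalently $\F^r_{\cx}$) extends linearly under this base change, and feed in the specializations of the characteristic-zero classes, which are simultaneously Tate classes and unobstructed. What you add beyond the paper's terse proof is genuinely useful: the explicit descent of $A^r(X_{\overline{K}})$ to a finite $L/K$, the three-ring bookkeeping for a possibly ramified $L_\xq/\Qxp$ (checking membership in $\F^r_{\cx}\otimes_{\Zxp}W_m$ after a faithfully flat extension), the eigenvalue argument identifying $\ker(\Frob_\k^m-q^{mr})$ with $\Ttilde^r(X_\xp)\otimes\Qq_m$, and the accurate remark that the inequality itself only needs that characteristic-zero classes are of type $(r,r)$ and Frobenius eigenvectors, whereas the paper's proof formally cites Theorem~\ref{thm:simplified bloch-esnault-kerz-14}, of which at most the easy containment is relevant here.

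The one step you assert rather than justify is the delicate one: the equality $\dim_\Qq B=\dim_{\Qq_m}$ of the span of $B$ inside $\H^{2r}_{\crys}(X_\xp/\Zxp)\otimes\Qq_m$. Writing $B\otimes_\Qq\Qq_m$ hides the issue: the abstract tensor product has the right dimension, but all of your containments (inside $T^r$, inside $\ker\pi_m$) concern its image in cohomology, and a priori the natural map from the tensor product to that image need not be injective, i.e.\ $\Qq$-independent classes could conceivably become $\Qq_m$-dependent. The claim is true here precisely because $B$ comes from characteristic zero: algebraic classes on $X_L$ lie in the rational Betti structure, so their span in de~Rham cohomology over $L_\xq$ has $L_\xq$-dimension equal to $\dim_\Qq A^r(X_L)_\Qq$, and specialization is induced by the isomorphism $\H^{2r}_{\dR}(X_{L_\xq}/L_\xq)\simeq\H^{2r}_{\crys}(X_\xp/\Zxp)\otimes_{\Zxp}L_\xq$, which preserves spans. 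This is worth spelling out, because for classes on $\overline{X}_\xp$ that are not known to lift, the analogous statement is exactly the point where the paper must assume the full Tate conjecture (Proposition~\ref{prop:decompositions_agree}); your proof is fine, but only because $B$ is the image of characteristic-zero cycles.
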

\begin{proof}
  Crystalline cohomology base changes like the de Rham cohomology. However, the natural action of $\Frob_{\k}$ on the extended scalars will not be linear. Instead, if $\k'/\k$ is a base extension then the natural action of $\Frob_{\k'}$ will correspond to the linear extension of $\Frob_{\k}^{[\k':\k]}$ from $\H_\crys^{2r}(X_\xp/\Zxp)$. It follows that if $[\k':\k]=u$ then the span of $\Ttilde^r(X_\xp)$ will give the new space of Tate classes. Further extensions of the base field will not increase the dimension of the space of Tate classes. Observing that the obstruction map $\pi_{\cx}$ extends linearly with base change, we conclude the proof using Theorem~\ref{thm:simplified bloch-esnault-kerz-14}.
\end{proof}

In practice we will approximate $\Frob_{\k}$ to a few $p$-adic digits (often $5$ to $20$ digits). Increasing the precision is costly and each power of $\Frob_{\k}$ will lose some of that precision. Considering that $u$ maybe quite large, we will compute $\Ttilde^r(X_\xp)$ in the following way which requires taking at most the $v$-th power of $\Frob_\k$, with $v \colonequals \max_{i} \{\deg \Phi_{i} \mid \gamma_i \neq 0\}$.

\begin{proposition}\label{prop:cylic_splitting}
  Let $T_{i}^r(X_\xp) \colonequals \ker\bigl( \Phi_{i}(q^{-r} \Frob_{\k}) \bigr)$. Then $\Ttilde^r(X_\xp) = \bigoplus_{i\ge 0} T_{i}^r(X_\xp)$, where the sum is taken over $i$ with $\gamma_i \neq 0$.
\end{proposition}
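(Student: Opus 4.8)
The plan is to identify $\Ttilde^r(X_\xp)$ with the kernel of $\phi^u-\id$ for the scaled Frobenius $\phi\colonequals q^{-r}\Frob_{\k}$ and then apply the primary (Chinese Remainder) decomposition coming from the factorization $t^u-1=\prod_{d\mid u}\Phi_d(t)$. First I would note that on $\H_{\crys}^{2r}(X_\xp/\Zxp)\otimes\Qxp$ we have $\Frob_{\k}=q^r\phi$, hence $\Frob_{\k}^u-q^{ur}=q^{ur}(\phi^u-\id)$; since $q^{ur}$ is a unit this gives $\Ttilde^r(X_\xp)=\ker(\phi^u-\id)$. Because $\chr\Qxp=0$, the polynomial $t^u-1$ is separable over $\Qxp$, so the cyclotomic factors $\Phi_d$ with $d\mid u$ are pairwise coprime in $\Qxp[t]$. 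Writing $1=\sum_{d\mid u}a_d(t)\prod_{e\neq d}\Phi_e(t)$ produces $\phi$-equivariant idempotents whose images are the subspaces $\ker\Phi_d(\phi)$; since $\prod_{d\mid u}\Phi_d(\phi)=\phi^u-\id$ annihilates $\Ttilde^r(X_\xp)$, while each $\ker\Phi_d(\phi)$ already lies in $\ker(\phi^u-\id)$ (as $\Phi_d\mid t^u-1$), we obtain
\[
\Ttilde^r(X_\xp)=\bigoplus_{d\mid u}\ker\Phi_d(\phi)=\bigoplus_{d\mid u}T_d^r(X_\xp).
\]

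Next I would determine which summands are nonzero. Every $i$ with $\gamma_i\neq 0$ divides $u=\operatorname{lcm}\{i\mid\gamma_i\neq 0\}$, so each $T_i^r(X_\xp)$ with $\gamma_i\neq 0$ appears in the displayed sum. Conversely, suppose $d\mid u$ and $T_d^r(X_\xp)=\ker\Phi_d(\phi)\neq 0$. Then some primitive $d$-th root of unity is an eigenvalue of $\phi$, hence a root of $\chi_{2r}$; as no root of $h$ is a root of unity, it must be a root of $\Phi_i$ for some $i$ with $\gamma_i\neq 0$, and since $\Phi_i$ is the minimal polynomial over $\Qq$ of a primitive $i$-th root of unity, this forces $i=d$, i.e.\ $\gamma_d\neq 0$. (Equivalently: $\Phi_d$ is irreducible over $\Qq$, so $\gcd(\Phi_d,\chi_{2r})\neq 1$ forces $\Phi_d\mid\chi_{2r}$, and then $\Phi_d\nmid h$ together with coprimality of distinct $\Phi_i$ yields $\gamma_d\geq 1$.) Discarding the vanishing summands leaves exactly $\Ttilde^r(X_\xp)=\bigoplus_{i:\gamma_i\neq 0}T_i^r(X_\xp)$, as claimed.

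I do not expect a genuine obstacle here: the argument is elementary linear and polynomial algebra. The one point requiring care is the interplay of coefficient fields. The operator $\phi$ acts on a $\Qxp$-vector space, whereas $\chi_{2r}$, the $\Phi_i$, and $h$ lie in $\Qq[t]$ and the relevant eigenvalues live in $\overline{\Qxp}$; one must check that separability of $t^u-1$ and pairwise coprimality of the $\Phi_d$ survive base change from $\Qq$ to $\Qxp$ (they do, as $\chr\Qxp=0$), and that the divisibility assertions relating $\Phi_d$ and $\chi_{2r}$ may be verified over $\Qq$ because the $\gcd$ of polynomials is unchanged under field extension. None of this is hard, but it is the only place where something could go wrong if one is careless.
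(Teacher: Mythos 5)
Your proof is correct and follows essentially the same route as the paper: both identify $\Ttilde^r(X_\xp)$ as the kernel of $\phi^u-\id$ for $\phi=q^{-r}\Frob_\k$ and invoke the primary decomposition with respect to the pairwise coprime cyclotomic factors of the squarefree polynomial $t^u-1$. Your explicit Bézout idempotents and the careful check that $T_d=0$ when $\gamma_d=0$ (via $\gcd(\Phi_d,\chi_{2r})$) merely spell out details the paper's citation of the primary decomposition theorem leaves implicit.
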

\begin{proof}
  The restriction of $q^{-r} \Frob_{\k}$ to $\Ttilde^r(X_\xp)$ is annihilated by the polynomial $t^u-1$. Therefore, its minimal polynomial will have only reduced cyclotomic factors. Now apply the primary decomposition theorem from linear algebra~\cite[\S 6.8, Theorem 12]{hoffman-kunze--linear_algebra}.
\end{proof}

\subsection{Improving the obstruction using the Frobenius decomposition}\label{sec:improve_obstruction}

The space of ``eventually Tate classes'' $\Ttilde \colonequals \Ttilde^r(X_\xp)$ from \eqref{eq:Ttilde} admits the Frobenius decomposition $\Ttilde = \bigoplus_{i} T_i$ of Proposition~\ref{prop:cylic_splitting}.
It is tempting to obstruct each $T_i$ individually in order to improve the upper bounds we can get for the dimension of $A^r(X_{\overline{K}})$. Assuming the full Tate conjecture, as described below, we show that this is possible here.

The full Tate conjecture assumes, in addition to the Tate conjecture, that the intersection product on algebraic cohomology classes is non-degenerate~\cite{milne-07}.
This additional requirement always holds for surfaces.
In particular, K3 surfaces over finite fields continue to satisfy the full Tate conjecture~\cite{charles-13, madapusi-13, kim-madapusi-16}.

For this section let $A \colonequals A^r(X_{\overline{K}})_\Qq$, $B \colonequals A^r(\overline{X}_\xp)_\Qq$ and $H \colonequals \H^{2r}_{\crys}(X_\xp/\Zxp)\otimes \Qp$.
We warn the reader that $B$ does not map into $H$. Pick a finite extension $\k'/\k$ where all algebraic cycle classes of $\overline{X}_{\xp}$ can be defined. Recall that $W(\k')$ is the ring of Witt vectors of $\k'$ and $Q(\k')$ is its fraction field.
Then $q^{-r} \Frob_\k$ maps $B$ equivariantly into
\begin{equation}\label{eq:hprime}
  H' \colonequals \H^{2r}_{\crys}(X_{\xp} \times_{\k} \k'/ W(\k')) \otimes_{W(\k')} Q(\k') \simeq H \otimes_{\Qxp}  Q(\k').
\end{equation}
The natural action of $\Frob_\k$ on $H'$ is \emph{not} the one that linearly extends $q^{-r}\Frob_\k$ from $H$.
Overcoming this non-linearity is the main technical obstacle in this section. Let $\sigma \colon Q(\k') \to Q(\k')$ be the field homomorphism induced by the Frobenius map of $\k$ on $\k'$. The natural $\Frob_\k$ action on $H'$ is the $\sigma$-linear extension of the $\Frob_\k$ action on $H$.

\begin{lemma}\label{lem:equivariant_inclusion}
  The image of the inclusion $\iota\colon A^r(X_{\overline{K}}) \toi A^r(\overline{X}_p)$ is invariant under the action of $q^{-r} \Frob_{\k}$.
\end{lemma}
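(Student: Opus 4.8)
The plan is to show that the specialization image $\iota(A) \subset B$ is stable under $q^{-r}\Frob_\k$ by exhibiting it as the image of a $\Frob_\k$-equivariant sublattice of cycles, then transporting equivariance through the cycle class map. The key point is that the specialization map $\mathrm{sp}\colon \cz^r(X_{\Qxp}) \to \cz^r(X_\xp)$, and more generally $\mathrm{sp}\colon \cz^r(X_{\overline{K}}) \to \cz^r(\overline{X}_\xp)$ obtained by base changing to $\overline{k}$, commutes with the natural Galois action: a cycle and its Galois conjugate specialize to a cycle and its conjugate. Concretely, $\overline{X}_\xp = X_\xp \times_\k \overline{k}$ carries the $\gal(\overline{k}/\k)$-action, whose topological generator is the arithmetic Frobenius of $\k$; since $A^r(X_{\overline{K}})$ is defined over $\overline K$ but already accounts for all cycles over the algebraic closure, its image under $\iota$ consists of cycle classes that, while individually defined over finite extensions of $\k$, form a set permuted by $\Frob_\k$. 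This gives that $\iota(A) \subset B$ is $\Frob_\k$-stable as a subset of $\cz^r(\overline{X}_\xp) \otimes \Qq$, hence as a subspace after applying the (Frobenius-equivariant) crystalline cycle class map $c_\crys$.

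More precisely, I would argue as follows. First, recall from diagram~\eqref{eq:cycle_class_maps} (and its evident base change to $\overline{k}$) that $B = A^r(\overline{X}_\xp)_\Qq$ is the $\Qq$-span of the image of $c_\crys\colon \cz^r(\overline{X}_\xp) \to \H^{2r}_\crys(\overline{X}_\xp/W(\overline{k}))$, and that $\iota(A)$ is the $\Qq$-span of classes of those cycles on $\overline{X}_\xp$ that arise by specialization from cycles on $X_{\overline{K}}$. Second, invoke functoriality of crystalline cohomology and of the cycle class map under the absolute Frobenius of $\k$: the relative Frobenius $F\colon \overline{X}_\xp \to \overline{X}_\xp^{(p)}$, combined with the isomorphism $\overline{X}_\xp^{(p)} \simeq \overline{X}_\xp$ coming from $\k$ being perfect with $\Frob_\k$ acting on $\overline{k}$, induces $\Frob_\k$ on cohomology and permutes the set of subvarieties; crucially this permutation preserves the subset of specializations of $\overline{K}$-subvarieties, because applying $\Frob_\k$ to the special fiber of a model over $\Zz_\xp$ (after base change) matches applying the corresponding field automorphism upstairs. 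Third, conclude that $q^{-r}\Frob_\k$ maps $\iota(A)$ into itself: the scaling by $q^{-r}$ is harmless since $\iota(A)$ is already a linear subspace, and $\Frob_\k$ preserves it by the previous step.

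The step I expect to be the main obstacle is making precise the compatibility of $\Frob_\k$ with specialization at the level of cycles over the algebraic closure --- that is, checking that for a subvariety $V \subset X_{\overline{K}}$ defined over a finite extension $K'/K$ with a prime $\xp' \mid \xp$ of good reduction, the Frobenius conjugate of $\mathrm{sp}(V)$ equals the specialization of the Frobenius conjugate of $V$. This requires a careful bookkeeping of the $\gal(\overline{k}/\k)$-action versus the decomposition group action in $\gal(\overline{K}/K)$ at $\xp$, using that $\xp$ is unramified so that the decomposition group surjects onto $\gal(\overline{k}/\k)$ with the arithmetic Frobenius lifting correctly. Once this is in place, the transport through $c_\crys$ is formal. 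I would also remark that, alternatively, one can bypass cycles entirely and argue on the cohomological side: $\iota(A)\otimes \Qp$ is the image of $A^r(X_{\Qxpbar})$ under the specialization isomorphism on de~Rham/crystalline cohomology, which is $\gal$-equivariant by construction, and the $\gal(\overline{k}/\k)$-action on the crystalline side is exactly generated by $\Frob_\k$; but the cycle-theoretic argument is cleaner for our purposes and keeps us inside $\H^{2r}_\crys(X_\xp/\Zxp)$ as emphasized in Section~\ref{sec:enlarging base field}.
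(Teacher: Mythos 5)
Your proposal is correct and follows essentially the same route as the paper: descend $A^r(X_{\overline{K}})$ to a finite extension $K'/K$, choose a prime $\xq$ of $K'$ above $\xp$, and lift $\Frob_\k$ to an element of the decomposition group acting on cycles upstairs, so that specialization intertwines the two actions and the image of $\iota$ is stable (the twist by $q^{-r}$ being harmless). The step you flag as the main obstacle is exactly what the paper dispatches by citing the standard fact that, $\xp$ being unramified, the Frobenius lifts into the subgroup of $\gal(K'/K)$ fixing $\xq$, so no further work is needed beyond that reference.
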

\begin{proof}
  We need to find an action on the left hand side that commutes with the Frobenius action on the right hand side. Note that there is a finite Galois extension $K'/K$ such that $A^r(X_{\overline{K}}) = A^r(X_{K'})$. We can also choose a prime $\xq$ of $K'$ lying above $\xp$ for the reductions. It is standard that one can lift the Frobenius action into the subgroup of $\gal(K'/K)$ that fixes~$\xq$~\cite[\S~8]{milneANT}.
\end{proof}

Let $\chi_A, \chi_B \in \Qq[t]$ be the characteristic polynomials of the $q^{-r} \Frob_\k$ action on $A$ and $B$ respectively. We will write $\chi_{\Ttilde} \in \Qq[t]$ for the characteristic polynomial of $q^{-r} \Frob_\k$ on $\Ttilde \subset H$ from~\eqref{eq:Ttilde}. We know that $q^{-r} \Frob_\k$ is of finite order on each of these spaces, therefore we can factor these characteristic polynomials as follows:
\begin{equation}
\begin{aligned}\label{eq:exponents}
  \chi_A =& \prod_{i} \Phi_i^{\alpha_i},\, \chi_B =& \prod_{i} \Phi_i^{\beta_i}, \text{ and } \chi_{\Ttilde} =& \prod_{i} \Phi_i^{\gamma_i},
\end{aligned}
\end{equation}
where $\Phi_i \in \Zz[t]$ is the cyclotomic polynomial for any primitive $i$-th root of unity.
Note that we want to determine the dimension $\dim A$ which equals $\sum_{i} \alpha_i \deg \Phi_i$.

Decompose these spaces into $A = \bigoplus_i A_i$, $B=\bigoplus_i B_i$ and $\Ttilde = \bigoplus_i T_i$ as in Proposition~\ref{prop:cylic_splitting}, with $A_i = \ker\left( \Phi_i(q^{-r} \Frob_\k) \right) $ and so on.

\begin{lemma}\label{lem:exponents}
  For each $i$ we have $\alpha_i \le \beta_i \le \gamma_i$.
\end{lemma}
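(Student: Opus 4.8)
The plan is to prove the two chains of inequalities separately: $\alpha_i\le\beta_i$ by a formal module-theoretic argument built on Lemma~\ref{lem:equivariant_inclusion}, and $\beta_i\le\gamma_i$ by invoking the full Tate conjecture together with the identification (from the two preceding propositions) of $\Ttilde$ as the space of ``stabilized'' Tate classes.

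For $\alpha_i\le\beta_i$: by Lemma~\ref{lem:equivariant_inclusion} the inclusion $\iota\colon A\toi B$ is equivariant for $q^{-r}\Frob_\k$, so $A$ is a $\Qq[t]$-submodule of $B$ with $t$ acting as $q^{-r}\Frob_\k$. Since $q^{-r}\Frob_\k$ has finite order on both spaces it is killed by a separable polynomial $t^M-1$, so these modules are semisimple; hence the decompositions $A=\bigoplus_i A_i$ and $B=\bigoplus_i B_i$ of Proposition~\ref{prop:cylic_splitting} are compatible with $\iota$, giving $A_i\subseteq B_i$ and therefore $\alpha_i\deg\Phi_i=\dim_\Qq A_i\le\dim_\Qq B_i=\beta_i\deg\Phi_i$. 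Equivalently, the characteristic polynomial of a submodule divides that of the ambient module, so $\chi_A\mid\chi_B$.

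For $\beta_i\le\gamma_i$: fix a finite extension $\k'/\k$ with $d\colonequals[\k':\k]$ a multiple of $u$, large enough that every class in $B$ is defined over $\k'$. On $H'=H\otimes_{\Qxp}Q(\k')$ of~\eqref{eq:hprime} the natural $\Frob_{\k'}$-action is the $Q(\k')$-linear extension of $\Frob_\k^d$, so the space of Tate classes of $\overline X_\xp$ realized over $\k'$ is $\ker\bigl((q^{-r}\Frob_\k)^d-1\bigr)\otimes_{\Qxp}Q(\k')$; since $u\mid d$ and every root-of-unity eigenvalue of $q^{-r}\Frob_\k$ on $H$ has order dividing $u$, this equals $\Ttilde\otimes_{\Qxp}Q(\k')=\bigoplus_i T_i\otimes_{\Qxp}Q(\k')$. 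On the other hand, the full Tate conjecture assumed in this section identifies $B\otimes_\Qq Q(\k')$ with this same space of Tate classes $\Frob_\k$-equivariantly, the non-degeneracy clause ensuring the cycle class map does not collapse $B$. Matching cyclotomic pieces identifies $B_i\otimes_\Qq Q(\k')$ with a subspace of $T_i\otimes_{\Qxp}Q(\k')$, and comparing $Q(\k')$-dimensions gives $\beta_i\deg\Phi_i=\dim_\Qq B_i\le\dim_{\Qxp}T_i=\gamma_i\deg\Phi_i$.

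I expect the only genuine obstacle to be the semilinearity flagged in Section~\ref{sec:improve_obstruction}: the natural $\Frob_\k$-action on $H'$ is merely $\sigma$-linear, so ``$\Frob_\k$-equivariant identification'' and ``matching cyclotomic pieces'' must be interpreted carefully. The way around it is to phrase everything through the genuinely $Q(\k')$-linear operator $\Frob_{\k'}=\Frob_\k^{d}$: the classes of $B$ become defined over $\k'$ and hence land in the $\Frob_{\k'}$-fixed subspace of $H'$, full Tate identifies their $Q(\k')$-span with that fixed subspace, and the choice $u\mid d$ makes that fixed subspace equal to $\Ttilde\otimes Q(\k')$. The finer $\Phi_i$-grading is then recovered because on $\Ttilde$ the operator $q^{-r}\Frob_\k$ already has order dividing $u$, so its primary decomposition is defined over $\Qq$ and survives the semilinear base change; tracking a $\Qq$-basis of $B_i$ through this identification yields the stated inequality, the remaining steps being routine bookkeeping.
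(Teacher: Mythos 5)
Your treatment of $\alpha_i\le\beta_i$ is correct and is essentially the paper's own argument: Lemma~\ref{lem:equivariant_inclusion} makes $\iota$ equivariant for $q^{-r}\Frob_\k$, so $A$ becomes a $\Qq[t]$-submodule of $B$ killed by a separable polynomial $t^M-1$, the decompositions of Proposition~\ref{prop:cylic_splitting} are compatible with $\iota$, and $A_i\subseteq B_i$ gives the inequality.

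The second inequality, however, has a genuine gap, and it sits exactly where you defer to ``routine bookkeeping''. The only inputs you actually use are: $B$ is $F$-stable with $F|_B$ a $\Qq$-linear operator of finite order, the elements of $B$ are $Q(\k')$-linearly independent, and (Tate over the single large field $\k'$) the $Q(\k')$-span of $B$ is $\Ttilde\otimes_{\Qxp}Q(\k')$. These inputs do not bound the cyclotomic multiplicities of $F|_B$ by those of $q^{-r}\Frob_\k$ on $\Ttilde$: for the $\sigma$-semilinear extension $F$, the kernel of $\Phi_i(F)$ is \emph{not} contained in $T_i'=T_i\otimes_{\Qxp}Q(\k')$, so the primary decomposition does not ``survive the semilinear base change'' in the sense you need. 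Concretely, let $\Ttilde=T_4$ be two-dimensional over $\Qxp$ with $q^{-r}\Frob_\k$ acting by the companion matrix of $\Phi_4=t^2+1$ (so $\gamma_1=0$, $\gamma_4=1$, $u=4$) and take $[\k':\k]=4$. Then $F^4=\id$ on $\Ttilde'$, so by Galois descent the $F$-fixed vectors form a two-dimensional $\Qxp$-subspace whose $Q(\k')$-span is all of $\Ttilde'$; the $\Qq$-span $B$ of a $\Qxp$-basis of this fixed space satisfies every formal hypothesis above, yet $F|_B=\id$, i.e.\ ``$\beta_1=2>\gamma_1=0$''. So the inequality is not a formal consequence of the top-level identification; it requires arithmetic input at the intermediate levels. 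That is precisely what the paper's suggested route (the argument of Proposition~\ref{prop:decompositions_agree}) supplies: for each relevant $j$ one uses that $\bigoplus_{j'\mid j}B_{j'}$ equals $A^r(X_{\xp^j})_\Qq$, whose classes are defined over $\k_j$ and therefore land in the crystalline cohomology over $W(\k_j)$, where they are killed by the genuinely linear operator $(q^{-r}\Frob_\k)^j-1$; the full Tate conjecture is then invoked over each $\k_j$, and an induction over the cyclotomic levels matches the pieces. (In the toy example this is exactly what fails: a class fixed by Frobenius would have to lie in $T_1$, which is zero there.) Alternatively, the paper's other route avoids the Tate conjecture entirely: compare with $\ell$-adic \'etale cohomology, where the cycle class map and the Frobenius action are honestly linear and the characteristic polynomial agrees with the crystalline one, so $\beta_i\le\gamma_i$ is the usual submodule inequality. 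This last point also affects the statement itself: the lemma carries no hypothesis in the paper, whereas your proof makes it conditional on the full Tate conjecture.
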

\begin{proof}
  The first inequality $\alpha_i \le \beta_i$ follows from Lemma~\ref{lem:equivariant_inclusion} since $B_i \subset A_i$. The inequality $\beta_i \le \gamma_i$ is more subtle, but standard. Either one can argue as in Proposition~\ref{prop:decompositions_agree} below, or deduce it from the Weil conjectures in comparison with the \'etale cohomology with $\Qq_{\ell}$-coefficients.
\end{proof}

For each $i$, let $T_i' \simeq T_i \otimes_{\Qxp} Q(\k')$ be the span of $T_i$ in $H'$, see~\eqref{eq:hprime}.

\begin{proposition}\label{prop:decompositions_agree}
Assuming the full Tate conjecture for $X$, we have $T'_i \cap B = B_i$.
\end{proposition}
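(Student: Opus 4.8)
The plan is to transport the asserted equality from $B$ to $\widetilde T' \colonequals \widetilde T \otimes_{\Qxp} Q(\k') = \bigoplus_i T_i'$, where the two decompositions will be manifestly the same, and then to intersect back with $B$. The one nontrivial point — as flagged in the paragraph before the statement — is the non-linearity of the natural Frobenius on $H'$, and the work will be in reconciling it with the linear operator that cuts out the $T_i'$.

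First I would fix a convenient residue field. Replace $\k'$ by a finite extension, still denoted $\k'$, over which every algebraic cycle class of $\overline X_\xp$ is defined \emph{and} with $[\k':\k]$ a multiple of $u$; this changes neither $A$, $B$, the spaces $T_i$, nor the exponents in \eqref{eq:exponents}, so no generality is lost. With this choice $\Frob_\k^{[\k':\k]}$ acts on $\widetilde T$ by the scalar $q^{[\k':\k]r}$, so the Tate classes of $\overline X_\xp$ over $\k'$ are exactly $\widetilde T' \subseteq H'$; and by Proposition~\ref{prop:cylic_splitting} (base changed to $Q(\k')$) one has $\widetilde T' = \bigoplus_i T_i'$.

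Next I would invoke the full Tate conjecture over $\k'$. It holds for $X$ by hypothesis, hence for $X_\xp \times_\k \k'$, so the crystalline cycle class map gives a $Q(\k')$-linear isomorphism $B \otimes_\Qq Q(\k') \xrightarrow{\ \sim\ } \widetilde T'$: here the non-degeneracy of the intersection pairing — the extra content of the \emph{full} conjecture — is what guarantees that the classes of a $\Qq$-basis of $B$ stay $Q(\k')$-linearly independent, so the map is an isomorphism and not merely a surjection, and by functoriality it carries the $q^{-r}\Frob_\k$-action on $B$ to the natural Frobenius action on $\widetilde T'$. Now I would match the decompositions inside $\widetilde T'$: each $T_i' = T_i \otimes_{\Qxp} Q(\k')$ is preserved both by the $Q(\k')$-linear extension of $q^{-r}\Frob_\k$ from $H$ — on which it is precisely the $\ker\Phi_i$-summand — and by $\gal(Q(\k')/\Qxp)$, since $T_i$ is already defined over $\Qxp$; the natural ($\sigma$-linear) Frobenius on $\widetilde T'$ is the composite of these two commuting operators, so it respects $\widetilde T' = \bigoplus_i T_i'$ as well. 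Pulling this back along the Tate isomorphism gives $B \otimes Q(\k') = \bigoplus_i \widehat B_i$ with $\widehat B_i$ the preimage of $T_i'$, stable under the relevant operators; since the operator on $B$ defining the $B_i$ is $\Qq$-linear and $\Phi_i$ is self-reciprocal (so the arithmetic-versus-geometric normalization of Frobenius is immaterial), one identifies $\widehat B_i = B_i \otimes_\Qq Q(\k')$. Intersecting inside $\widetilde T'$ then finishes it: $T_i' \cap B = \widehat B_i \cap B = (B_i \otimes_\Qq Q(\k')) \cap B = B_i$, the last step because $B = \bigoplus_j B_j$ is already a decomposition over $\Qq$.

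The hard part is exactly this matching step. The natural Frobenius on $H'$ is only $\sigma$-linear over $Q(\k')$, whereas $T_i'$ is carved out by a $\Qxp$-linear operator and $B$ is only a $\Qq$-form sitting inside a $Q(\k')$-vector space; so the genuine content is that the $\sigma$-linear and the linear normalizations of Frobenius induce the \emph{same} $\Phi_i$-isotypic pieces on the image of $B$. I expect to handle this by keeping careful track of the precise Galois element of $\gal(Q(\k')/\Qxp)$ relating the two normalizations and using that cyclotomic polynomials are, up to sign, invariant under $t \mapsto t^{-1}$; alternatively, one can run the entire argument in $\ell$-adic étale cohomology (where the Galois action is honestly linear) and transfer the conclusion via the agreement of Frobenius characteristic polynomials, which is how one also obtains the inequality $\beta_i \le \gamma_i$ used in Lemma~\ref{lem:exponents}.
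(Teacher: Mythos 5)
Your scaffolding is fine and close in spirit to the paper's: the full Tate conjecture over $\k'$ (via non-degeneracy of the intersection pairing) makes $B$ a $\Qq$-structure on $\Ttilde' = \bigoplus_i T_i'$, forcing in particular $\beta_i=\gamma_i$ in \eqref{eq:exponents}; each $T_i'$ is stable under the natural $\sigma$-linear Frobenius; and the whole proposition reduces to your "matching step" $\widehat B_i = B_i\otimes_\Qq Q(\k')$, equivalently $B_i\subseteq T_i'$. But that step — which you yourself flag as "the hard part" and defer with "I expect to handle this by\dots" — is exactly the mathematical content of the proposition, and the justifications you offer would not close it. The Tate isomorphism $\theta\colon B\otimes_\Qq Q(\k')\to\Ttilde'$ intertwines only the \emph{composite} $\sigma$-semilinear operators ($q^{-r}\Frob_\k$ on $B$ tensored with $\sigma$, versus the natural Frobenius on $H'$); it does not intertwine the factors $q^{-r}\Frob_\k\otimes 1$ and $1\otimes\sigma$ separately, because $B$ is not contained in $H$: the $\Qq$-form $B$ and the $\Qxp$-form $H$ are two unrelated rational structures inside $H'$, and $T_i'$ is cut out by the \emph{linear} operator attached to the $H$-structure. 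So $\widehat B_i$ is only known to be stable under the semilinear operator, and such subspaces of $B\otimes_\Qq Q(\k')$ need not be of the form $S\otimes_\Qq Q(\k')$ with $S\subseteq B$; self-reciprocality of $\Phi_i$ (arithmetic versus geometric normalization) is not the issue. Worse, pure semilinear bookkeeping cannot succeed: once $u\mid[\k':\k]$ the linearization $\Frob_\k^{[\k':\k]}$ acts on all of $\Ttilde'$ by $q^{r[\k':\k]}$, and, concretely, if $2\mid[\k':\k]$ and $\sigma(c)=-c$ with $c\neq 0$, then for $0\neq v_0\in T_1$ the vector $v_0\otimes c\in T_1'$ is killed by $\Phi_2$ of the normalized semilinear Frobenius while lying in $T_1'$, not $T_2'$. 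Hence "$b$ is killed by $\Phi_i$ of the semilinear Frobenius" — the only property $\theta$ transports from $B_i$ — does not place $b$ in $T_i'$. Your $\ell$-adic fallback "transfer via characteristic polynomials" also only yields multiplicity data like $\beta_i\le\gamma_i$ (Lemma~\ref{lem:exponents}), not an identification of subspaces inside crystalline cohomology.

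The missing input is geometric: the classes in $B_{\le i}$ come from cycles defined over the degree-$i$ subextension $\k_i$, so they already lie in $H\otimes_{\Qxp}Q(\k_i)$, where $q^{-ri}\Frob_\k^i-1$ is an honest linear operator annihilating them. This is precisely how the paper argues: it inducts on $i$, identifies $B_{\le i}$ with $A^r(X_{\xp^i})$ as the Galois-fixed classes over $\k_i$, uses the linear operator over $W(\k_i)$ together with the $Q(\k')$-linear independence of $B$ to get $\Ttilde'_{\le i}\cap B=B_{\le i}$, and then uses Frobenius-stability of the $T_j'$ plus the inductive hypothesis to pin $T_i'\cap B$ into $B_i$. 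To make your write-up a proof you must supply this descent through the intermediate extensions (or an equivalent argument using the fields of definition of the cycles); as it stands the decisive step is asserted, and the repair strategies you sketch would fail without that extra input.
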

\begin{proof}
  For each $i$, let $\k_i/\k$ be a degree $i$ extension and let $H_i$ be the crystalline cohomology of $X_{\xp^i} \colonequals X_\xp \times_\k \k_i$. The operator $q^{-ri} \Frob_\k^i - 1$ is linear on $H_i$ and it annihilates $A^r(X_{\xp^i})$. Moreover, $B_{\le i} = \bigoplus_{j \le i} B_j$ coincides with $A^r(X_{\xp^i})$ because both sides are the fixed elements of $A^r(\overline{X}_\xp)$ with respect to the absolute Galois group of $\k_i$.

  Having assumed the full Tate conjecture, we may conclude that $B_{\le i}$ is a $\Qq$-substructure for $T_{\le i}'$. In particular, elements of $B$ are $Q(\k')$-linearly independent in the ambient space $H'$. This implies the equality $\Ttilde'_{\le i} \cap B = B_{\le i}$. The proposition for $i=1$ follows immediately. Fix $i$ and assume that the proposition holds for each $j<i$.

  Although the $\Frob_\k$ action is not $Q(\k')$-linear on $T_i'$, the space $T_i'$ is nevertheless a $\Qq$-vector space that is $\Frob_\k$ invariant. Thus, the intersection $T_i' \cap B$ is a $\Frob_\k$ invariant $\Qq$-subspace of $B_{\le i}$. In particular, it admits a decomposition into a direct sum, with each component lying in one $B_j$ for $j \le i$. But distinct spaces $T_j'$ are disjoint and each $T_j'$ contains $B_j$ when $j<i$. It follows immediately that the intersection $T_i' \cap B$ must lie in $B_i$. The equality follows.
\end{proof}

For each $i \in \Zz_{>0}$ define the integer $r_i$ as
\begin{equation}
  r_i = \left\{
    \begin{array}{lr}
      0 & \pi_{T_i} = 0 \\
      \deg \Phi_i & \pi_{T_i} \ne 0
    \end{array}
    \right. .
\end{equation}

\begin{corollary}\label{cor:better_bound}
  Assuming the full Tate conjecture, $\dim_{\Qq} A^r(X_{\overline{K}}) \le \dim_{\Qxp} \Ttilde - \sum_i r_i$.
\end{corollary}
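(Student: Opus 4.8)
The plan is to reduce the estimate to its individual cyclotomic pieces. Writing $\chi_A=\prod_i\Phi_i^{\alpha_i}$ and $\chi_{\Ttilde}=\prod_i\Phi_i^{\gamma_i}$ as in~\eqref{eq:exponents}, the primary decomposition underlying Proposition~\ref{prop:cylic_splitting} gives $\dim_\Qq A=\sum_i\alpha_i\deg\Phi_i$ and $\dim_{\Qxp}\Ttilde=\sum_i\gamma_i\deg\Phi_i$, while $\sum_i r_i=\sum_{i\,:\,\pi_{T_i}\neq 0}\deg\Phi_i$ (note $r_i=0$ whenever $T_i=0$, so this sum is finite). Since $\alpha_i\le\gamma_i$ by Lemma~\ref{lem:exponents}, the asserted inequality
\[
  \dim_\Qq A\ \le\ \dim_{\Qxp}\Ttilde-\sum_i r_i
\]
follows termwise from the implication ``$\pi_{T_i}\neq 0\Rightarrow\gamma_i-\alpha_i\ge 1$'', and I would instead establish its contrapositive: \emph{if $\alpha_i=\gamma_i$ then $\pi_{T_i}=0$}. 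So fix an index $i$ with $\alpha_i=\gamma_i$; by the squeeze $\alpha_i\le\beta_i\le\gamma_i$ we then also have $\beta_i=\gamma_i$, and the task becomes to prove $T_i\subseteq\ker\pi_\cx$.

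The first step is to locate the cycle classes contributing to $A_i$ inside crystalline cohomology. By Lemma~\ref{lem:equivariant_inclusion} the map $\iota$ commutes with $q^{-r}\Frob_\k$, hence sends $A_i$ into $B_i$, and $\alpha_i=\beta_i$ upgrades this to an isomorphism $\iota\colon A_i\isoto B_i$. Then I would invoke the full Tate conjecture through the proof of Proposition~\ref{prop:decompositions_agree}: the classes of $B$ are $Q(\k')$-linearly independent inside $H'\simeq H\otimes_{\Qxp}Q(\k')$ from~\eqref{eq:hprime}, and $B_i=T_i'\cap B$, so the natural map $B_i\otimes_\Qq Q(\k')\to T_i'$ is injective; comparing $Q(\k')$-dimensions, $\dim_\Qq B_i=\beta_i\deg\Phi_i=\gamma_i\deg\Phi_i=\dim_{Q(\k')}T_i'$, so it is in fact an isomorphism. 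Composing, the crystalline cycle classes of the elements of $A_i$ span $T_i'$ over $Q(\k')$.

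The second step is to show these classes are unobstructed for an appropriate model. By Lemma~\ref{lem:equivariant_inclusion} the cycles underlying $A_i$ are defined over a finite extension $K'/K$ carrying a prime $\xq\mid\xp$; enlarging $K'$ — which does not change $A^r(X_{\overline K})$ — I may assume in addition that the residue field $\k'$ of $\xq$ carries all of $B$ and that $K'/K$ is unramified at $\xp$, the latter because good reduction makes the Galois representation on $A^r(X_{\overline K})$ unramified at $\xp$ (compare with the $\ell$-adic étale cohomology). Then $\cx_W\colonequals\cx\times_{\Zxp}W(\k')$ is a smooth projective model of $X_\xp\times_\k\k'$ over $W(\k')=\co_{K',\xq}$, with generic fiber $X\times_K K'_\xq$, and the dimension and residue characteristic are unchanged, so Theorem~\ref{thm:simplified bloch-esnault-kerz-14} applies to it: closing the cycles underlying $A_i$ up flatly in $\cx_W$ presents their specializations as elements of $\cz^r(\widehat{\cx_W})$, so their crystalline classes lie in $\F^r_{\cx_W}\H^{2r}_\crys(X_\xp\times_\k\k'/W(\k'))$. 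Since the Hodge filtration base-changes along $\Qxp\hookrightarrow Q(\k')$, compatibly with the Berthelot comparison isomorphism and with Definition~\ref{def:induced_filtration}, this $\F^r$-piece equals $\F^r_\cx H\otimes_{\Qxp}Q(\k')=\ker(\pi_\cx\otimes Q(\k'))$. Combining the two steps, $T_i'\subseteq\ker(\pi_\cx\otimes Q(\k'))$, that is $T_i\otimes_{\Qxp}Q(\k')\subseteq\ker\pi_\cx\otimes_{\Qxp}Q(\k')$; descending along the field extension $Q(\k')/\Qxp$ gives $T_i\subseteq\ker\pi_\cx$, hence $\pi_{T_i}=0$, as required.

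The step I expect to be the main obstacle is the interlock in the second step between the two external inputs: the full Tate conjecture (via Proposition~\ref{prop:decompositions_agree}, which is what lets me replace $T_i'$ by the $Q(\k')$-span of honest cycle classes) and the Bloch--Esnault--Kerz theorem (which controls those cycle classes, but only for the enlarged model $\cx_W/W(\k')$). The delicate points are that one really can enlarge $K'$ to be unramified at $\xp$ — so that $\co_{K',\xq}$ is the \emph{unramified} ring $W(\k')$ and $\cx_W=\cx\times_{\Zxp}\co_{K',\xq}$ — and that the Hodge filtration, the Berthelot isomorphism, and the de~Rham and crystalline cycle class maps are all compatible with the base change $\Zxp\to W(\k')$, so that ``unobstructed for $\cx_W$'' genuinely translates back to membership in $\ker(\pi_\cx\otimes Q(\k'))$. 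The remaining ingredients — the dimension bookkeeping, the equivariance of $\iota$, and the faithfully flat descent along $Q(\k')/\Qxp$ — are routine.
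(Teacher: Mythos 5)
Your argument is correct and is essentially the paper's own proof read contrapositively: the termwise reduction via $\alpha_i\le\beta_i\le\gamma_i$ (Lemma~\ref{lem:exponents}), the identification of the $Q(\k')$-span of $B_i$ with $T_i'$ under the full Tate conjecture (Proposition~\ref{prop:decompositions_agree}), and the unobstructedness of classes that lift to characteristic zero (via Theorem~\ref{thm:simplified bloch-esnault-kerz-14}) are exactly the ingredients the paper combines to show that $\pi_{T_i}\neq 0$ forces $\alpha_i<\beta_i=\gamma_i$. Where you go beyond the paper is the base-change bookkeeping in your second step; there, the inference that $K'/K$ may be taken unramified at $\xp$ because the Galois representation on $A^r(X_{\overline{K}})$ is unramified is loose (invariance of a class under inertia does not give a field of definition for the cycle), but it is harmless: one can average the cycles over inertia without changing their specialized classes, or bypass the issue entirely, since all that is needed is $A_i\subset\ker\pi$ after extension of scalars, which is how the paper uses it in the proof of Proposition~\ref{prop:even_better_bound}.
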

\begin{proof}
With notation as in~\eqref{eq:exponents} we recall $\dim A = \sum_i \alpha_i \deg \Phi_i$, with corresponding sums for the dimensions of $B$ and $\Ttilde$. Thus, we need only show $\alpha_i < \gamma_i$ when the obstruction map on $T_i$ is non-zero. If $\beta_i = \gamma_i$ then, in light of Proposition~\ref{prop:decompositions_agree}, the span of $B_i$ and $T_i$ must agree in $H'$. If $\pi_{T_i} \neq 0$ then the containment $A_i \subset B_i$ must be strict by Theorem~\ref{thm:simplified bloch-esnault-kerz-14}, implying $\alpha_i < \beta_i = \gamma_i$. On the other hand, if $\beta_i < \gamma_i$ then there is nothing more to do.
\end{proof}

\begin{remark}
  We can use the bound in Corollary~\ref{cor:better_bound} even when computing the obstructions with finite precision. Let $r_{i,N}$ to be $0$ or $\deg \Phi_i$ depending on whether $\pi_{T_i} \equiv 0 \bmod p^N$. Clearly, $r_{i,N} \le r_i$ and $\dim_{\Qxp} \Ttilde - \sum_i r_{i,N}$ also serves as an upper bound.
\end{remark}

The result of Corollary~\ref{cor:better_bound} is easy to use. However, we can improve these bounds by computing more. Define the following map on $T_i$
\begin{equation}\label{eqn:pii}
  \pi_{i} : v \mapsto \left(\pi(v), \pi\circ q^{-r} \Frob_\k(v), \dots, \pi \circ \left(q^{-r} \Frob_\k\right)^{u_i -1}(v)\right).
\end{equation}
Observe that this map is $\Qxp$-linear and let $L_i = \ker \pi_i$. The space $L_i$ is the largest $\Frob_\k$-invariant subspace of $\ker \pi_{T_i}$.

\begin{proposition}\label{prop:even_better_bound}
  Assuming the full Tate conjecture, we have the following inequality
  \[
    \dim_{\Qq} A^r(X_{\overline{K}}) \le \sum_i \dim_{\Qxp} L_i.
  \]
\end{proposition}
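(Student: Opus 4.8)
The plan is to prove the inequality one cyclotomic component at a time. Writing $A:=A^r(X_{\overline K})_\Qq=\bigoplus_i A_i$ as in \eqref{eq:exponents}, we have $\dim_\Qq A=\sum_i\dim_\Qq A_i$, so it is enough to establish $\dim_\Qq A_i\le\dim_\Qxp L_i$ for each $i$ and sum. Throughout, set $F:=q^{-r}\Frob_\k$; the central bookkeeping point is to keep its two incarnations apart — the $\Qxp$-linear operator on $H=\H^{2r}_\crys(X_\xp/\Zxp)\otimes\Qxp$ that is used to build the map $\pi_i$ of \eqref{eqn:pii} and $L_i=\ker\pi_i\subseteq T_i$, and the natural $\sigma$-semilinear operator on $H'$ of \eqref{eq:hprime}, under which $\iota(A^r(X_{\overline K}))\subseteq B:=A^r(\overline X_\xp)_\Qq$ is stable by Lemma~\ref{lem:equivariant_inclusion}. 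As $\iota$ is injective and $F$-equivariant, $\iota(A_i)=\ker\bigl(\Phi_i(F)\big|_{\iota(A)}\bigr)$ is $F$-stable; by Proposition~\ref{prop:decompositions_agree} it sits inside $B_i\subseteq T_i':=T_i\otimes_\Qxp Q(\k')\subseteq H'$.

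Next I would extract the two consequences of the full Tate conjecture that I need. From the proof of Proposition~\ref{prop:decompositions_agree}, a $\Qq$-basis of $B$ is $Q(\k')$-linearly independent in $H'$; hence, writing $V\subseteq H'$ for the $Q(\k')$-span of $\iota(A_i)$, we get $\dim_{Q(\k')}V=\dim_\Qq\iota(A_i)=\dim_\Qq A_i$. Since $\dim_{Q(\k')}\!\bigl(L_i\otimes_\Qxp Q(\k')\bigr)=\dim_\Qxp L_i$, the whole proposition comes down to the single inclusion $V\subseteq L_i\otimes_\Qxp Q(\k')$ inside $H'$.

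To prove that inclusion — and this is where the semilinearity of $F$ forces some care — I would first describe $L_i\otimes_\Qxp Q(\k')$ intrinsically through the semilinear $F$. With $\pi'\colon H'\to\bigl(H/\F^r_\cx H\bigr)\otimes_\Qxp Q(\k')$ the $Q(\k')$-linear extension of $\pi$, a short computation gives
\[
  L_i\otimes_\Qxp Q(\k') \;=\; \bigl\{\,w\in T_i' : \pi'(F^j w)=0 \text{ for } 0\le j<u_i\,\bigr\},
\]
both sides being equal to $\bigl(\bigcap_{j=0}^{u_i-1}F^{-j}\ker\pi_{T_i}\bigr)\otimes_\Qxp Q(\k')$; here one uses that $F$ restricts to the $\Qxp$-linear operator on $T_i\otimes 1$, is a $\sigma$-semilinear bijection of $T_i'$, and that applying a power of $\sigma$ to the $Q(\k')$-coefficients of a tensor does not change whether it vanishes. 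Granting this description, $V\subseteq L_i\otimes_\Qxp Q(\k')$ follows from two facts: (a) $V$ is stable under the semilinear $F$, which is immediate since $V$ is the $Q(\k')$-span of the $F$-stable set $\iota(A_i)$ and $F$ is $\sigma$-semilinear; and (b) $\pi'$ kills $\iota(A_i)$, hence $V$, by $Q(\k')$-linearity of $\pi'$. For then any $w\in V$ satisfies $F^j w\in V\subseteq\ker\pi'$ for all $j$, so $w$ lies in the displayed set.

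I expect (b) to be the main obstacle; it is the statement that the reduction of an algebraic cycle coming from characteristic zero is unobstructed after the matching enlargement of the base field. The plan is: choose a finite extension $K'/K$ with $A^r(X_{\overline K})=A^r(X_{K'})$ and a good-reduction prime $\xq\mid\xp$ as in the proof of Lemma~\ref{lem:equivariant_inclusion}, enlarging $K'$ so that $\k'$ is the residue field at $\xq$; note that the natural model of $X_{K'}$ at $\xq$ is $\cx\times_\Zxp\co_{K',\xq}$, whose de~Rham and crystalline cohomologies and Hodge filtration are base-changed from those of $\cx/\Zxp$, so its obstruction map is exactly $\pi'$; then apply the Hodge-theoretic fact that a de~Rham cycle class lies in $\F^r$, which through the commutative square in \eqref{eq:cycle_class_maps} forces the crystalline class of the reduction of any cycle on $X_{K'}$ into $\F^r_\cx$ after base change. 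This is the assertion $\pi_\cx(A^r(\cdot))=0$ recorded in Section~\ref{sec:obstruction_map} (see also \cite[\S 8]{bloch-esnault-kerz-14}), applied over $\co_{K',\xq}$, and it yields $\pi'(\iota(A^r(X_{K'})_\Qq))=0$, hence (b). The remaining ingredients — the semisimple decomposition $A=\bigoplus_i A_i$, the $F$-equivariance of $\iota$, the flatness identity $\ker(\pi_i\otimes\mathrm{id}_{Q(\k')})=L_i\otimes_\Qxp Q(\k')$, and the linear-algebra identity displayed above — are routine.
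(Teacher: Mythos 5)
Your proof is correct and follows essentially the same route as the paper's: the componentwise reduction to $\dim_{\Qq} A_i \le \dim_{\Qxp} L_i$, the use of the full Tate conjecture via the $Q(\k')$-linear independence of $B$ (through Proposition~\ref{prop:decompositions_agree}), and the unobstructedness of classes coming from characteristic zero are exactly the paper's ingredients. The only difference is bookkeeping for the semilinearity: where the paper twists the $Q(\k')$-action on the codomain of the extended map $\pi_i^!$ so that it becomes linear and identifies its kernel with the span of $L_i$, you instead characterize $L_i \otimes_{\Qxp} Q(\k')$ inside $T_i'$ via the semilinear Frobenius, using that applying $\sigma$ to coefficients preserves subspaces defined over $\Qxp$ — an equivalent maneuver (and your more explicit verification that $\pi'$ kills $\iota(A_i)$ just fills in a step the paper leaves implicit).
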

\begin{proof}
  Let $u_i$ be the degree of $\Phi_i$. We extend the map $\pi_i$ to the span $T_i'$ of $T_i$ in $H'$ as follows. On each element we use the same definition
  \begin{equation}
    \pi_{i}^! : v \mapsto \left(\pi(v), \pi\circ q^{-r} \Frob_\k(v), \dots, \pi \circ \left(q^{-r} \Frob_\k\right)^{u_i -1}(v)\right).
  \end{equation}
  However, we modify the $Q(\k')$-action on the codomain so that $Q(\k')$ acts $\sigma^i$-linearly on the $i$-th coordinate. As a consequence, $\pi_i^{!}$ is $Q(\k')$-linear. Therefore, the kernel of $\pi_i^{!}$ is the span $L_i'$ of $L_i$ in $H'$.

  Since $A_i \subset B_i$ is in the kernel of $\pi$ and $A_i$ is invariant under Frobenius, $A_i$ is contained in the kernel of $\pi_i^!$. Thus, the $Q(\k')$-span of $A_i$ lies in $L_i'$. Since elements of $B_i$ are $Q(\k')$-linearly independent, the same holds for $A_i$. We conclude that
  $\dim_{\Qq} A_i \le \dim_{Q(\k')} L_i' = \dim_{\Qxp} L_i$.
\end{proof}

\subsection{Bounds on the characteristic polynomial of the Frobenius}
\label{sec:bounds of charpoly}

Let $\overline{X}_\xp$ denote the scheme $X_\xp \times_\k \overline{\k}$ for an algebraic closure $\overline{\k}$ of $\k$. Let $q=p^n$ be the number of elements in $\k$.  The Weil conjectures (now a theorem~\cite{deligne-weil}) tell us that the Hasse--Weil zeta function of $X_\xp$ has the form
\begin{equation}
  Z(X_{\xp}, t) \colonequals \exp\left( \sum_{m=1} ^{\infty} \frac{ \# X_\xp(\Ff_{q^m})}{m} t^m \right) = \prod_{i = 0}^{2 \dim(X_\xp)} P_i{(X_\xp, t)}^{{(-1)}^i},
\end{equation}
where $P_i(X_\xp,t)$ is the reciprocated characteristic  polynomial of the Frobenius action on $\H^i_{\crys}(X_\xp/\Zxp)$.
\begin{equation}
P_i(X, t) \colonequals \det(1 - t \Frob_\k) \in 1 + t\Zz[t].
\end{equation}
The polynomial $P_i(X_\xp,t)$ has integer coefficients, has constant term $1$, and all of its roots in $\Cc$ have Euclidean norm $q^{-i/2}$.  This information is crucial in deducing $P_i$'s from an approximate representation of the Frobenius~(see Section~\ref{sec:compute chi frob}).

Note that the $\chi_{2r}$, defined in Section~\ref{sec:enlarging base field}, and $P_{r}$ are related by
\begin{equation}
  t^{\deg \chi_{2r}}\chi_{2r}(1/t) = P_{2r}(X,t q^{-r}).
\end{equation}

\begin{remark}
  Tate conjecture states that the real roots of $P_{2r}(X_{\xp} \times_{\k} \k',q^{-r}t) \in \Qq[t]$ for a sufficiently large (but finite) extension $\k'/\k$ coincides with $\dim A^r(\overline{X}_{\xp})$.
  Thus, we expect the parity of $\dim A^r(\overline{X}_{\xp})$ to match the parity of $\dim \H_{\crys}^{2r}(X_{\xp}/\Zxp) = \deg P_{2r}$.
\end{remark}

\section{Computing in crystalline cohomology}\label{sec:computing_in_crystalline}

Computations in crystalline cohomology are made possible by comparing it with two other cohomology theories. Berthelot's comparison theorem relates crystalline cohomology to a characteristic~$0$ de~Rham cohomology. Monsky--Washnitzer cohomology, on the other hand, allows one to represent the Frobenius map explicitly. In this section, we outline this construction.

The de~Rham cohomology is amenable to computation in general.
For hypersurfaces in particular, Griffiths' explicit description of a basis makes these computations highly practical, see Section~\ref{sec:griffiths}.  The approximations of the action of the Frobenius are made in terms of the this basis.

This approach to approximating the Frobenius action was first conceptualized by Kedlaya~\cite{kedlaya--practice}, first implemented by Abbott, Kedlaya and Roe~\cite{abbott-kedlaya-roe-10}, and shown to be practical in larger characteristic by Costa and Harvey~\cite{harvey-hypersurface1, harvey-hypersurface2, harvey-hypersurface3, costa-phd}.

We summarize the required statements from~\cite{abbott-kedlaya-roe-10} and re-frame some using $\Zxp$ coefficients as opposed to $\Qxp$ coefficients (e.g. Proposition~\ref{prop:gamma_equals_Hlog}).

\subsection{Crystalline to de~Rham cohomology}\label{sec:crystalline_to_dR}

The de~Rham cohomology of an affine variety does not behave well in finite characteristic. On the other hand, in characteristic zero, the cohomology of a hypersurface is made explicit by working with the (affine) complement of the hypersurface. The de~Rham cohomology of a \emph{log pair} is what is needed to carry the characteristic zero advantages to positive characteristic.

Let $\cy$ be a smooth proper variety over $\Zxp$ and let $\cx \toi \cy$ be a subvariety that is a relative normal crossing divisor. Such a pair $(\cy,\cx)$ is called a \emph{smooth proper pair} over $\Zxp$~(\cite[Section~2.2]{abbott-kedlaya-roe-10}). Denote the complement of $\cx$ in $\cy$ by $\cu \colonequals \cy \setminus \cx$. In practice, we will take $\cy = \Pp_{\Zxp}^{2r+1}$ and $\cx$ a smooth hypersurface.

The de~Rham cohomology, as well as the crystalline cohomology, of a smooth proper pair $(\cy,\cx)$ is well defined~\cite[Chapter 2]{shiho-02}.
The cohomology of the hypersurface complement $\cu$, at least over the generic fiber, can be computed via the pair $(\cy,\cx)$.
Indeed, there exists a natural isomorphism~\cite[Theorem 6.4]{kato-89}:
\begin{equation}
  \H_{\dR}^i((\cy,\cx)/\Zxp)\otimes \Qxp \simeq \H_{\dR}^i(\cu_{\Qxp}/\Qxp).
\end{equation}

We may compare the de~Rham and crystalline cohomologies of a smooth proper pair $(\cy,\cx)$ by the following generalization of Berthelot's comparison theorem. We denote by $\cy_\xp$ and $\cx_\xp$ the special fibers of $\cy$ and $\cx$ over $\Zxp$.

\begin{theorem}[Berthelot comparison theorem~\cite{berthelot-97, shiho-02}]
  \label{thm:comparison integral}
  For each $i\ge 0$, there is a canonical isomorphism
  \begin{equation*}
    H^{i}_\dR ((\cy, \cx)/\Zxp) \simeq H^{i}_\crys (\cy_\xp, \cx_\xp).
  \end{equation*}
The functoriality of crystalline cohomology thus equips the de~Rham cohomology of $(\cy,\cx)$ with a Frobenius action.
\end{theorem}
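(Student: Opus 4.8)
The plan is to exhibit both sides as instances of Kato's logarithmic crystalline formalism and then apply the log version of Berthelot's comparison theorem. First I would endow $\cy_\xp$ with the logarithmic structure associated to the normal crossing divisor $\cx_\xp$, so that $(\cy_\xp,\cx_\xp)$ becomes a fine (even saturated) log scheme, log smooth over $\spec \k$ with its trivial log structure; by definition $\H^i_\crys(\cy_\xp,\cx_\xp)$ is the cohomology of the structure sheaf on the log crystalline site of $(\cy_\xp,\cx_\xp)$ relative to the PD base $(\spec\Zxp,(p),\gamma)$, where $(p)\subset\Zxp$ carries its canonical divided powers. (These exist for every prime $p$, as $v_p(p^n/n!)\ge 0$, so no hypothesis on the residue characteristic is needed here; the bound $p>\dim X_\xp+6$ enters only later, in Theorem~\ref{thm:simplified bloch-esnault-kerz-14}.) On the de~Rham side, I would endow $\cy$ with the log structure attached to the divisor $\cx$; then $(\cy,\cx)/\Zxp$ is log smooth, and $\H^i_\dR((\cy,\cx)/\Zxp)$ is by definition the hypercohomology over $\cy$ of the log de~Rham complex $\Omega^\bullet_{\cy/\Zxp}(\log\cx)$ — the same complex whose generic fibre computes $\H^i_\dR(\cu_{\Qxp}/\Qxp)$ via the isomorphism of \cite{kato-89} recalled above.

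The comparison is then the logarithmic analogue of Berthelot's theorem \cite{berthelot-97, shiho-02}: whenever a log smooth, $p$-adically formally smooth and proper lift over $\Zxp$ of a log smooth $\k$-scheme exists, the log crystalline cohomology of the special fibre is computed by the log de~Rham complex of that lift; and here $(\cy,\cx)/\Zxp$ is precisely such a lift of $(\cy_\xp,\cx_\xp)/\k$. The engine is Kato's log crystalline Poincaré lemma: the linearization of $\Omega^\bullet_{\cy/\Zxp}(\log\cx)$ on the log crystalline site of the special fibre resolves the structure sheaf, and since a lift exists the log crystalline topos is equivalent to one built from the $p$-adic thickenings of that lift. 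Taking $R\Gamma$ then yields a quasi-isomorphism $R\Gamma_\crys(\cy_\xp,\cx_\xp)\simeq R\Gamma(\cy,\Omega^\bullet_{\cy/\Zxp}(\log\cx))$, hence the isomorphism on each $\H^i$; Shiho's treatment \cite{shiho-02} gives this with $\Zxp$-coefficients rather than merely rationally, which is the integral form claimed. The Frobenius action in the last clause then comes for free: the Frobenius of $(\cy_\xp,\cx_\xp)$ is a morphism of log schemes lying over the Frobenius of $\k$, so by functoriality of the log crystalline site it acts semilinearly on $\H^i_\crys(\cy_\xp,\cx_\xp)$, and transporting it across the comparison isomorphism equips $\H^i_\dR((\cy,\cx)/\Zxp)$ with a Frobenius.

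The main obstacle is not in the structure of the argument but in the bookkeeping of the log structures near the crossings of $\cx_\xp$: locally $\cx=\{x_1\cdots x_k=0\}$, and one must verify that the chart $\Nn^k\to\co_\cy$ exhibits $(\cy,\cx)$ as fine and log smooth over $\Zxp$ (with trivial log base) and reduces compatibly to $(\cy_\xp,\cx_\xp)/\k$, so that the log Poincaré lemma and the attendant control of the divided power envelopes along the log thickenings apply uniformly. This is exactly what is carried out in \cite{kato-89, berthelot-97, shiho-02}, so for the purposes of this paper it suffices to invoke it; but it is the step that cannot be skipped, and it explains why the hypothesis that $(\cy,\cx)$ is a \emph{smooth proper pair} — relative normal crossings rather than an arbitrary divisor — is essential.
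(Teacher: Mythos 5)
Your proposal is correct and follows essentially the route the paper relies on: the paper does not prove Theorem~\ref{thm:comparison integral} itself but imports it from~\cite{berthelot-97, shiho-02} (via the smooth-proper-pair formalism of~\cite{abbott-kedlaya-roe-10} and~\cite{kato-89}), and your sketch --- log structures attached to the relative normal crossing divisor, Kato's log crystalline Poincar\'e lemma, Shiho's integral comparison for a log smooth proper lift, and Frobenius by functoriality of the log crystalline site --- is exactly the argument those references carry out. The only step you pass over lightly is the identification of $\H^i_\dR((\cy,\cx)/\Zxp)$ with the limit over the thickenings $\cy\otimes\Zxp/\xp^n$ (properness and finite generation of the cohomology), but that too is part of the cited machinery rather than a gap in your reasoning.
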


Let us remark that if $\cx=\emptyset$ then $\H^i_{\dR}((\cy,\cx)/\Zxp)$ is denoted by $\H^i_{\dR}(\cy/\Zxp)$ as it coincides with the usual de~Rham cohomology of $\cy$. The analogous notational convention holds for the crystalline cohomology.

\subsection{Splitting the cohomology of a hypersurface}

We will first recall the characteristic zero statement regarding the splitting of the cohomology of a smooth hypersurface. Then, we will give the equivalent statement over $\Zxp$.

\subsubsection{Splitting in characteristic zero} Let $K$ be a characteristic zero field and $X \subset \Pp^{n+1}_K$ a smooth hypersurface with complement $U$. By the Lefschetz hyperplane theorem and Poincar\'e duality, the only non-trivial cohomology group of $X$ is in degree $n$. Moreover, there is a natural splitting of the cohomology of $X$ which is orthogonal with respect to the cup product:
\begin{equation}
  \H^n_{\dR}(X/K) \simeq \H^{n+2}_{\dR}(\Pp_K^{n+1}/K) \oplus^{\perp} \H^{n+1}_{\dR}(U/K).
\end{equation}
The projection onto the first factor, the cohomology of $\Pp_K^{n+1}$, has the following geometric interpretation. Thinking of the underlying complex analytic manifolds, there is a natural map from the singular homology of $\Pp^{n+1}_{\Cc}$ to the homology of $X_{\Cc}$ obtained by intersecting a homology class in $\Pp^{n+1}$ with $X$. The dual of this map gives the projection onto the first factor above. The kernel of this projection is called the \emph{primitive part of the cohomology} and is canonically identified with $\H^{n+1}_{\dR}(U)$. Note that if $n$ is odd then $\H^{n+2}_{\dR}(\Pp_K^{n+1})$ is zero.

\subsubsection{Splitting over the ring of Witt vectors} We will now work over the Witt vectors $\Zxp$ of the residue field $\k$ of characteristic $p$. We take $\cx \toi \Pp^{2r+1}_{\Zxp}$ to be a smooth hypersurface of even dimension, the odd case being much simpler and of less interest for our purposes. Recall that the de~Rham cohomology groups of  $\Pp_{\Zxp}^{2r+1}$, $\cx$, and $(\Pp_{\Zxp}^{n+1},\cx)$ admit canonical Frobenius actions via Theorem~\ref{thm:comparison integral}.  The goal of this section is to state and prove Proposition~\ref{prop:split_cohomology}, which is not explicitly stated in~\cite{abbott-kedlaya-roe-10} but follows from the arguments presented there.

To put Proposition~\ref{prop:split_cohomology} in context, let us recall some of the simpler results. From~\cite[Corollary~3.1.4]{abbott-kedlaya-roe-10} we have
\begin{equation}\label{eq:cohom_of_Pn}
  \H_{\dR}^{i}(\Pp^{n}_{\Zxp}) = \left\{
    \begin{array}{lr}
      \Zxp & i=\{0,2,\dots,2n\} \\
      0   & \text{otherwise}
    \end{array},
    \right. \qquad \forall n \ge 0.
\end{equation}
Furthermore, by arguing as in~\cite[Lemma~2.4]{pan-20} or \cite{deligne-illusie}, we see that the cohomology groups $\H^i_{\dR}(\cx/\Zxp)$ are $\Zxp$-torsion-free for all $i$. Comparing with characteristic zero, we conclude that:
\begin{equation}
  \H_{\dR}^{i}(\cx) = \left\{
    \begin{array}{lr}
      \Zxp & i\in \{0,2,\dots,2n\}\setminus \{2r\} \\
      0   & \text{otherwise}
    \end{array}
    \right. .
\end{equation}

\begin{proposition}\label{prop:split_cohomology}
  Suppose the degree of $\cx$ is not divisible by $p$. Then, there is a natural Frobenius equivariant splitting of the cohomology of $\cx$:
  \begin{equation*}
    \H_{\dR}^{2r}(\cx/\Zxp) \simeq \H_{\dR}^{2r+2}(\Pp^{2r+1}_{\Zxp}/\Zxp)(1) \oplus \H_{\dR}^{2r+1}((\Pp_{\Zxp}^{n+1},\cx)/\Zxp)(1).
  \end{equation*}
\end{proposition}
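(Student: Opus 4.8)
The plan is to promote the characteristic-zero splitting recalled just above to an integral, Frobenius-equivariant statement by running the Gysin/residue exact sequence of the pair $(\Pp^{2r+1}_{\Zxp},\cx)$ over $\Zxp$ and then splitting it explicitly via the projection formula. The hypothesis $p\nmid\deg\cx$ will enter at exactly one point: it is precisely what prevents $p$-torsion from appearing in the relevant cokernel, so that the splitting exists over $\Zxp$ and not merely after inverting $p$.

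First I would invoke the localization (Gysin) long exact sequence for the smooth proper pair $(\Pp^{2r+1}_{\Zxp},\cx)$ in de~Rham cohomology over $\Zxp$; via the Berthelot comparison isomorphism (Theorem~\ref{thm:comparison integral}) it is identified with the crystalline Gysin sequence of $\cx_\xp\hookrightarrow\Pp^{2r+1}_{\k}$, so it carries a Frobenius action and all its maps are Frobenius equivariant up to the evident Tate twists (this is the point that ``follows from the arguments'' of~\cite{abbott-kedlaya-roe-10}, cf.\ also~\cite{shiho-02}). Write $\iota\colon\cx\hookrightarrow\Pp^{2r+1}_{\Zxp}$, let $\iota^*$ be restriction, $\iota_*$ the Gysin pushforward, and $\mathrm{Res}$ the residue. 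Using the cohomology of projective space and the Lefschetz vanishing already recorded above, namely $\H^{2r+1}_{\dR}(\Pp^{2r+1}_{\Zxp})=0$ and $\H^{2r\pm1}_{\dR}(\cx)=0$, the relevant stretch of the sequence collapses to a left-exact sequence
\[
  0\to\H^{2r+1}_{\dR}((\Pp^{2r+1}_{\Zxp},\cx))(1)\xrightarrow{\ \mathrm{Res}\ }\H^{2r}_{\dR}(\cx)\xrightarrow{\ \iota_*\ }\H^{2r+2}_{\dR}(\Pp^{2r+1}_{\Zxp})(1),
\]
and continuing the sequence shows $\H^{2r+2}_{\dR}((\Pp^{2r+1}_{\Zxp},\cx))=0$ once $\iota_*$ is known to be surjective (next step). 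Here $\H^{2r}_{\dR}(\cx)$ is finite free as recorded above, $\H^{2r+2}_{\dR}(\Pp^{2r+1}_{\Zxp})\cong\Zxp$ by~\eqref{eq:cohom_of_Pn}, and $\H^{2r+1}_{\dR}((\Pp^{2r+1}_{\Zxp},\cx))(1)$ is finite free automatically, being a submodule of $\H^{2r}_{\dR}(\cx)$ through $\mathrm{Res}$.

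The second step produces the splitting. Let $h\in\H^2_{\dR}(\Pp^{2r+1}_{\Zxp})$ be the hyperplane class, $h_\cx=\iota^*h$, and $d=\deg\cx$, so $[\cx]=d\,h$. The projection formula gives
\[
  \iota_*(h_\cx^{\,r})=h^{\,r}\cup[\cx]=d\,h^{\,r+1},
\]
and $h^{\,r+1}$ generates $\H^{2r+2}_{\dR}(\Pp^{2r+1}_{\Zxp})\cong\Zxp$. Since $p\nmid d$, the scalar $d$ is a unit in $\Zxp$, so $\iota_*$ is surjective and its restriction to the cyclic submodule $\Zxp\,h_\cx^{\,r}$ is an isomorphism onto $\H^{2r+2}_{\dR}(\Pp^{2r+1}_{\Zxp})(1)$. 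The inverse of this restriction is a $\Zxp$-linear section of $\iota_*$, and it is Frobenius equivariant because $\Frob_{\k}(h_\cx^{\,r})=q^{\,r}h_\cx^{\,r}$ matches the Frobenius scaling of $\H^{2r+2}_{\dR}(\Pp^{2r+1}_{\Zxp})(1)$ after the Tate twist. Hence the sequence above is short exact and splits, yielding
\[
  \H^{2r}_{\dR}(\cx)\ =\ \Zxp\,h_\cx^{\,r}\ \oplus\ \ker\iota_*\ =\ \Zxp\,h_\cx^{\,r}\ \oplus\ \mathrm{Res}\!\left(\H^{2r+1}_{\dR}((\Pp^{2r+1}_{\Zxp},\cx))(1)\right)
\]
as $\Zxp$-modules with Frobenius, the first summand being identified with $\H^{2r+2}_{\dR}(\Pp^{2r+1}_{\Zxp})(1)$ via $\iota_*$ (equivalently, via $\iota^*$ composed with the canonical isomorphism $\H^{2r+2}_{\dR}(\Pp^{2r+1}_{\Zxp})(1)\cong\H^{2r}_{\dR}(\Pp^{2r+1}_{\Zxp})$). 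Tensoring with $\Qxp$ recovers the orthogonal characteristic-zero splitting, and reducing modulo $\xp$ gives the corresponding intrinsic statement; one may add that the decomposition is orthogonal for the cup product, since the adjunction $\langle\iota_*\alpha,\beta\rangle_{\Pp}=\langle\alpha,\iota^*\beta\rangle_{\cx}$ identifies $\ker\iota_*$ with $(h_\cx^{\,r})^{\perp}$.

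The main obstacle I anticipate is not the linear algebra but the input: pinning down that the localization sequence for the log pair $(\Pp^{2r+1}_{\Zxp},\cx)$ genuinely exists over $\Zxp$ (and not only after inverting $p$) and carries a Frobenius-equivariant structure compatible with Theorem~\ref{thm:comparison integral}, together with keeping the Tate twists on $\mathrm{Res}$ and $\iota_*$ correctly placed. Everything else reduces to the one-line computation $\iota_*(h_\cx^{\,r})=d\,h^{\,r+1}$ and the observation that $d$ is a $p$-adic unit, so I would make sure the hypothesis $p\nmid\deg\cx$ is used exactly there and not tacitly invoked elsewhere.
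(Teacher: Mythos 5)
Your proposal is correct and follows essentially the same route as the paper: the paper also runs the long exact sequence of the proper pair $(\Pp^{2r+1}_{\Zxp},\cx)$ over $\Zxp$, collapses it using \eqref{eq:cohom_of_Pn} and torsion-freeness, and splits it via the computation $\varsigma\circ\iota^*_{2r}(h^r)=\deg(\cx)\,h^{r+1}$ (your projection-formula step), with $p\nmid\deg\cx$ entering exactly there and the Tate twists/Frobenius equivariance handled by the comparison results of~\cite{abbott-kedlaya-roe-10}. The only cosmetic difference is that you phrase the splitting map as the Gysin pushforward restricted to $\Zxp h_\cx^r$, while the paper phrases it as the composition $\varsigma\circ\iota^*_{2r}$ being an isomorphism; these are the same argument.
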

\begin{proof}
  Let $\cy=\Pp_{\Zxp}^{2r+1}$.
  There is a long exact sequence associated to proper pairs~\cite[Proposition~2.2.8]{abbott-kedlaya-roe-10}. Combining with~\eqref{eq:cohom_of_Pn} and ignoring the Frobenius action we get the following exact sequence:
  \begin{equation}\label{eq:mini_long_exact_sequence}
    0 \too \H_{\dR}^{2r+1}((\cy,\cx)/\Zxp) \too \H_{\dR}^{2r}(\cx/\Zxp) \overset{\varsigma}{\too} \H_{\dR}^{2r+2}(\cy/\Zxp).
  \end{equation}
  Corresponding to the inclusion $\iota\colon \cx \toi \cy$ we have pullback maps on cohomology
  \begin{equation}
  \iota^*_j \colon \H_{\dR}^{j}(\cy/\Zxp) \to \H_{\dR}^{j}(\cx/\Zxp).
  \end{equation}
  Let $h$ be the first Chern class of the line bundle $\co_{\cy}(1)$ and $h^k$ its $k$-th self product.
  It is easy to see that $\varsigma\circ \iota^*_{2r}(h^r) = \deg(\cx) h^{r+1}$. Since powers of $h$ generate the cohomology of $\cy$, and because $p$ does not divide the degree of $\cx$, the composition $\varsigma \circ \iota^*_{2r}$ is an isomorphism. In particular, the last arrow in the sequence~\eqref{eq:mini_long_exact_sequence} is a surjection and the sequence splits.

  To make the splitting equivariant with respect to the action of the Frobenius we must twist the two components. Recall that the cohomology of $\cx$ is torsion-free. Thus each component injects into its tensor with $\Qxp$. Now use the discussion following Definition~2.3.3 and Proposition~2.4.1 itself in ~\cite{abbott-kedlaya-roe-10} to conclude the proof.
\end{proof}

\subsection{Torsion-free obstruction space}\label{sec:torsion_free_obstruction}

Let $\F^\bullet\H_{\dR}^{2r}(\cx/\Zxp)$ denote the (decreasing) Hodge filtration on the cohomology of the smooth hypersurface $\cx \subset \Pp^{2r+1}$.

\begin{proposition}\label{prop:torsion_free_quotients}
  For any $a,b\ge0$ the quotient
  \begin{equation}\label{eq:quotients}
    \F^{a}\H_{\dR}^{2r}(\cx/\Zxp)/ \F^{a+b}\H_{\dR}^{2r}(\cx/\Zxp)
  \end{equation}
  is $\Zxp$-torsion-free.
\end{proposition}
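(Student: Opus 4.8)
The plan is to reduce the statement to a torsion-freeness assertion about the graded pieces of the Hodge filtration, and then to extract that from the integral Hodge theory of $\cx/\Zxp$. Write $\H := \H_{\dR}^{2r}(\cx/\Zxp)$; by (the argument of) \cite[Lemma~2.4]{pan-20} this is a finitely generated torsion-free, hence free, $\Zxp$-module. Since $b\ge 0$ the Hodge filtration gives inclusions $\F^{a+b}\H \subseteq \F^{a}\H \subseteq \H$, so the quotient in~\eqref{eq:quotients} is a $\Zxp$-submodule of $\H/\F^{a+b}\H$. A submodule of a torsion-free module is torsion-free, so it suffices to prove that $\H/\F^{c}\H$ is torsion-free for every $c\ge 0$ --- equivalently, that each $\F^{c}\H$ is a saturated submodule of $\H$, equivalently (since $\Zxp$ is a discrete valuation ring and $\H$ is finite free) that each $\F^{c}\H$ is a direct summand of $\H$.

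For this it is in turn enough to show that every graded piece $\Gr^{j}_{\F}\H = \F^{j}\H/\F^{j+1}\H$ is $\Zxp$-torsion-free: then $\H/\F^{c}\H$ is an iterated extension of the modules $\Gr^{j}_{\F}\H$ for $0\le j<c$, and an extension of a torsion-free $\Zxp$-module by a torsion-free $\Zxp$-module is again torsion-free (a torsion element of the middle term maps to $0$ in the torsion-free quotient, hence lies in the torsion-free sub, hence is $0$). So the whole proposition reduces to: $\Gr^{j}_{\F}\H_{\dR}^{2r}(\cx/\Zxp)$ is $\Zxp$-torsion-free for all $j$.

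To establish that, I would push the Deligne--Illusie-type argument already used for torsion-freeness (\cite[Lemma~2.4]{pan-20}, \cite{deligne-illusie}) a step further: in the present range of primes it shows not only that $\H_{\dR}^{\bullet}(\cx/\Zxp)$ is torsion-free but that each Hodge cohomology group $\H^{q}(\cx,\Omega^{p}_{\cx/\Zxp})$ is free over $\Zxp$ and that the Hodge--de~Rham spectral sequence $E_{1}^{p,q}=\H^{q}(\cx,\Omega^{p}_{\cx/\Zxp})\Rightarrow \H^{p+q}_{\dR}(\cx/\Zxp)$ degenerates at $E_{1}$ over $\Zxp$; consequently $\Gr^{j}_{\F}\H = E_{\infty}^{j,\,2r-j}=\H^{2r-j}(\cx,\Omega^{j}_{\cx/\Zxp})$ is free. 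An alternative, staying closer to this section, is to invoke Proposition~\ref{prop:split_cohomology}: being a splitting of filtered modules, it reduces the claim to the rank-one summand $\H_{\dR}^{2r+2}(\Pp^{2r+1}_{\Zxp}/\Zxp)(1)$, where the filtration has a single jump so all subquotients are $0$ or $\Zxp$, and to the primitive summand $\H_{\dR}^{2r+1}((\Pp^{2r+1}_{\Zxp},\cx)/\Zxp)(1)$, for which the Griffiths/Abbott--Kedlaya--Roe description \cite{abbott-kedlaya-roe-10} provides a $\Zxp$-basis of residues of rational forms adapted to the pole-order (that is, Hodge) filtration and identifies $\Gr^{j}$ with a graded piece of the Jacobian ring $\Zxp[x_{0},\dots,x_{2r+1}]/(\partial_{0}f,\dots,\partial_{2r+1}f)$ for $\cx=Z(f)$; smoothness of $\cx/\Zxp$ makes the $\partial_{i}f$ a regular sequence, so this ring is $\Zxp$-flat and its graded pieces are free.

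The genuine content is precisely this last input --- the integral refinement of the Hodge theory of $\cx$ (degeneration of Hodge--de~Rham over $\Zxp$, respectively $\Zxp$-flatness of the Jacobian ring), which is where the hypotheses on the prime ($p\nmid\deg\cx$, and $p$ not too small relative to $2r$) are actually used. Everything before it is formal module theory over the discrete valuation ring $\Zxp$; the point to be careful about is that the degeneration/flatness must be asserted over $\Zxp$ itself, not merely over its fraction field and residue field.
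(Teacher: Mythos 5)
Your proposal is correct and follows essentially the same route as the paper: reduce to torsion-freeness of the graded pieces $\Gr^{j}_{\F}$ via the formal observation that a filtered $\Zxp$-module with torsion-free graded pieces is torsion-free, and obtain that torsion-freeness from $E_1$-degeneration of Hodge--de~Rham à la Deligne--Illusie together with constancy/freeness of the Hodge cohomology groups (the paper gets the latter from the fact that hypersurface Hodge numbers depend only on degree and dimension). Your alternative route through Proposition~\ref{prop:split_cohomology} and the Jacobian ring is also sound and is essentially what the paper carries out separately for the primitive part in Section~\ref{sec:griffiths} (Lemmas~\ref{lem:graded_gamma} and~\ref{lem:S_tors_free}).
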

\begin{proof}
  The Hodge to de~Rham spectral sequence degenerates at $E_1$~\cite[Theorem~4.2.2]{deligne-illusie, illusie-91}.
  Therefore, for any $s\ge 0$, the graded piece
  \begin{equation}
    \Gr_s \colonequals \F^{s}\H_{\dR}^{2r}(\cx/\Zxp)/ \F^{s+1}\H_{\dR}^{2r}(\cx/\Zxp)
  \end{equation}
  is isomorphic to $\H^{2r-s}(\cx,\Omega_{\cx/\Zxp}^s)$. Because the Hodge numbers of a smooth hypersurface depends only on degree and dimension (and not on the base field), we conclude that the graded pieces, $\Gr_s$, are torsion-free.

  The Hodge filtration on cohomology descends to a filtration on~\eqref{eq:quotients} whose non-zero graded pieces coincide with the graded pieces, $\Gr_{\bullet}$, of the cohomology. A filtered module with torsion-free graded pieces is torsion-free.
\end{proof}

Recall the set-up from Section~\ref{sec:lifting_cycles}.
Let $\pi_{T}$ be the obstruction map~\eqref{eq:intro_pi} from $T^r(X_\xp)$ and let $\pi_A$ be the obstruction map from $A^r(X_\xp)$. Proposition~\ref{prop:torsion_free_quotients} implies the following result.

\begin{corollary}\label{cor:torsion_free_obstruction}
  A Tate class $v \in T^r(X_\xp)$ is unobstructed if and only if a non-zero integral multiple of $v$ is unobstructed. In symbols:
  \[
    \pi_T(v) = 0 \iff \left( \exists n\in \Zz_{>0},\, \pi_T(nv)=0 \right).
  \]
 The equivalent formulation using $\pi_A$ also holds. \qed
\end{corollary}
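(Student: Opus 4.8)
The plan is to reduce everything to one fact: the codomain of the obstruction map is $\Zxp$-torsion-free, which is exactly what Proposition~\ref{prop:torsion_free_quotients} supplies once it is transported through the Berthelot comparison isomorphism. So the bulk of the work is already done; what remains is to identify the right quotient and check that the identification is integral.

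First I would dispose of the easy implication. The map $\pi_T$ is by construction the restriction to $T^r(X_\xp)$ of the $\Zxp$-linear quotient map $\pi_{\cx}$ of Definition~\ref{def:obstruction_map}, so if $\pi_T(v)=0$ then $\pi_T(nv)=n\,\pi_T(v)=0$ for every $n\in\Zz_{>0}$. Only the converse has content, so from now on I assume $\pi_T(nv)=0$ for some $n\in\Zz_{>0}$ and aim to deduce $\pi_T(v)=0$. Let $\bar v$ be the image of $v$ in the codomain $\H_{\crys}^{2r}(X_\xp/\Zxp)/\F_{\cx}^{r}\H_{\crys}^{2r}(X_\xp/\Zxp)$ of $\pi_{\cx}$; then $n\bar v=0$, so it suffices to show this codomain has no nonzero $\Zxp$-torsion, which forces $\bar v=0$, i.e.\ $v\in\F_{\cx}^{r}\H_{\crys}^{2r}(X_\xp/\Zxp)$ — precisely the assertion that $v$ is unobstructed.

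To see the codomain is torsion-free, I would recall from Definition~\ref{def:induced_filtration} that $\F_{\cx}^{\bullet}\H_{\crys}^{2r}(X_\xp/\Zxp)$ is the image of the Hodge filtration on $\H_{\dR}^{2r}(\cx/\Zxp)$ under the comparison isomorphism~\eqref{eq:berthelot_iso_simple}; since that map is an isomorphism of $\Zxp$-modules, it identifies the codomain of $\pi_{\cx}$ with $\H_{\dR}^{2r}(\cx/\Zxp)/\F^{r}\H_{\dR}^{2r}(\cx/\Zxp)$. The latter is the quotient~\eqref{eq:quotients} with $a=0$ and $b=r$ (using $\F^{0}\H_{\dR}^{2r}(\cx/\Zxp)=\H_{\dR}^{2r}(\cx/\Zxp)$), hence $\Zxp$-torsion-free by Proposition~\ref{prop:torsion_free_quotients}. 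The $\pi_A$-version then follows verbatim: $A^r(X_\xp)$ is a subgroup of $\H_{\crys}^{2r}(X_\xp/\Zxp)$, so $nv$ again lies in $A^r(X_\xp)$, and $\pi_A$ is a restriction of the same $\pi_{\cx}$ whose codomain we have just shown is torsion-free.

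The only step that needs any care — and it is the "main obstacle" only in the sense of bookkeeping — is making sure the identification of the two quotients is genuinely at the level of $\Zxp$-coefficients: one must invoke Theorem~\ref{thm:comparison integral} integrally rather than its rationalization, and one must note that the induced filtration $\F_{\cx}^{\bullet}$ is defined before tensoring with $\Qxp$, so that "$\Zxp$-torsion-free" really does transfer across the comparison. Beyond that there is no genuine difficulty once Proposition~\ref{prop:torsion_free_quotients} is available.
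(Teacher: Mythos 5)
Your proof is correct and follows the same route the paper intends: the corollary is stated as an immediate consequence of Proposition~\ref{prop:torsion_free_quotients}, i.e.\ the codomain of $\pi_{\cx}$ is identified (integrally, via Theorem~\ref{thm:comparison integral}) with the quotient~\eqref{eq:quotients} for $a=0$, $b=r$, which is $\Zxp$-torsion-free, so $n\pi_T(v)=0$ forces $\pi_T(v)=0$. Your write-up simply makes explicit the bookkeeping the paper leaves to the reader, including the verbatim transfer to $\pi_A$.
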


By itself, Corollary~\ref{cor:torsion_free_obstruction} does not imply that the cokernel of the inclusion map $A^r(\widehat \cx) \toi A^r(X_{\xp})$ is $\Zz$-torsion-free. This is because the theorem of Bloch--Esnault--Kerz (Theorem~\ref{thm:simplified bloch-esnault-kerz-14}) works with rational coefficients and thus implies only that a \emph{multiple} of an obstructed class in $A^r(X_\xp)$ can formally lift.

Nevertheless, for divisors on surfaces ($r=1$), it is known that the cokernel of $A^1(X) \to A^1(X_{\xp})$ is torsion-free. This follows from the much stronger~\cite[Theorem~4.1.2.1]{raynaud--picard} or its generalization~\cite[Theorem~1.4]{elsenhans-jahnel-11b}.

\subsection{Primitive cohomology after Griffiths}\label{sec:griffiths}
In this section we will describe Griffiths' basis for the primitive cohomology $\Hprim \subset \H_{\dR}^n(\cx/\Zxp)$ of a smooth hypersurface $\cx \subset \Pp^{n+1}_{\Zxp}$ (Definition~\ref{def:griffiths_basis}). The main result leading up to this definition is Proposition~\ref{prop:gamma_equals_Hlog}.

Let $\Pp$ denote $\Pp^{n+1}_{\Zxp}$ throughout this section and let $d$ be the degree of $\cx$. Recall that $\Hprim$ is naturally isomorphic to $\Hlog \colonequals \H_{\dR}^{n+1}((\Pp,\cx)/\Zxp)$. When tensored with $\Qxp$, this logarithmic cohomology coincides with the cohomology of the generic fiber $U$ of the complement $\cu \colonequals \Pp \setminus \cx$ of our hypersurface $\cx$. That is,
\begin{equation}\label{eq:log_vs_U}
  \Hlog \otimes_{\Zxp} \Qxp \simeq \H_{\dR}^{n+1}(U/\Qxp).
\end{equation}
The $(n+1)$-th de~Rham cohomology of the affine variety $U$ is readily computed. Consider the following map, induced by exterior differentiation on global sections,
\begin{equation}
  F \colonequals \H^0(\Pp, \Omega^{n}_{\Pp/\Zxp}(n\cdot \cx))  \overset{\dd}{\too} G \colonequals \H^0(\Pp, \Omega^{n+1}_{\Pp/\Zxp}((n+1)\cdot \cx)) .
\end{equation}
We will denote the quotient $G/\dd F$ by $\Gamma$. Using characteristic 0 arguments~\cite[\S 6.1]{voisin-2007-volII}, we know $\Gamma \otimes_{\Zxp} \Qxp \simeq \H_{\dR}^n(U/\Qxp)$. In light of~\eqref{eq:log_vs_U}, we will compare $\Hlog$ with $\Gamma$.

We have a filtration $G^\bullet$ on $G$ induced by the pole order of forms:
\begin{equation}
  G^j \colonequals \im \left(  \H^0(\Pp, \Omega^{n+1}_{\Pp/\Zxp}((n+1-j)\cdot \cx)) \to \H^0(\Pp, \Omega^{n+1}_{\Pp/\Zxp}((n+1)\cdot \cx))\right) .
\end{equation}
The induced filtration on $\Gamma$ will be denoted by $\Gamma^\bullet$, so that $\Gamma^j = \im \left( G^j \to \Gamma \right) $.

Let $R = \Zxp[x_0,\dots,x_{n+1}]$ and $f \in R$ be a polynomial defining $\cx$.
Let $J = (\del_0 f,\dots,\del_{n+1} f)$ in $R$ be the Jacobian ideal of $f$, where $\del_i$ denotes differentiation by $x_i$. We will write $S$ for the quotient $R/J$. Homogeneous components of $R,S,J$ will be denoted by subscripts. Let $N_j \colonequals (n+1-j)d - n-2$ for $j=0,\dots,n$. We have a natural isomorphism
\begin{equation}\label{eq:R_to_G}
  R_{N_j} \isoto G^j : p \mapsto \frac{p}{f^{n+1-j}}\vol_{\Pp},
\end{equation}
here $\vol_{\Pp}$ is a natural generator of $\Omega^{n+1}_{\Pp/\Zxp}(n+2)$ given by
\begin{equation}
  \vol_{\Pp} \colonequals \sum_{i=0}^{n+1} (-1)^{i} x_i \dd x_0 \wedge \dots \wedge \widehat{\dd x_i} \wedge \dots \wedge \dd x_{n+1}.
\end{equation}

\begin{lemma}\label{lem:graded_gamma}
  If $p > n+1$ and $p \nmid d$ then the map $R_{N_j} \tos \Gamma^j$ induces an isomorphism
  \[
    \left( R/J \right)_{N_j} \isoto \Gamma^j/\Gamma^{j+1}.
  \]
\end{lemma}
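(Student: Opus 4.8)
The plan is to prove the two inclusions of kernels separately and then promote the resulting surjection to an isomorphism by a rank count over the discrete valuation ring $\Zxp$. Surjectivity of the composite $R_{N_j}\to G^j\to\Gamma^j\to\Gamma^j/\Gamma^{j+1}$ is immediate: the first arrow is the isomorphism~\eqref{eq:R_to_G}, $G^j\to\Gamma^j$ is onto by the definition of $\Gamma^j$, and the last arrow is a quotient map. To see that $J_{N_j}$ lies in the kernel it suffices, since $J$ is generated by the partials $\del_i f$, to treat an element $p=x^{\alpha}\del_i f$ with $|\alpha|=N_j-(d-1)$; if $N_j<d-1$ then $J_{N_j}=0$ and there is nothing to do, and when $N_j\ge d-1$ one automatically has $1\le n-j\le n$. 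The classical Griffiths reduction of pole order (see~\cite[\S~6.1]{voisin-2007-volII}) gives, modulo $\dd F$,
\[
  \frac{x^{\alpha}\del_i f}{f^{\,n+1-j}}\,\vol_{\Pp}\;=\;\frac{1}{\,n-j\,}\cdot\frac{\del_i(x^{\alpha})}{f^{\,n-j}}\,\vol_{\Pp},
\]
and the right-hand side has pole order $n-j=(n+1)-(j+1)$, hence lies in $G^{j+1}$ and represents an element of $\Gamma^{j+1}$. The only arithmetic input is the invertibility of $n-j$ in $\Zxp$, which holds because $1\le n-j\le n<p$ under the hypothesis $p>n+1$. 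Thus the map factors through a surjection $\overline{\phi}\colon (R/J)_{N_j}\tos\Gamma^j/\Gamma^{j+1}$.

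For the reverse inclusion I would argue by comparing $\Zxp$-ranks. Because $\cx/\Zxp$ is smooth, the partials $\del_0 f,\dots,\del_{n+1}f$ have no common zero in $\Pp$ over either the residue field $\k$ or the fraction field $\Qxp$, so in each of the Cohen--Macaulay rings $\k[x_0,\dots,x_{n+1}]$ and $\Qxp[x_0,\dots,x_{n+1}]$ they form a homogeneous system of parameters, hence a regular sequence. The Koszul complex then computes the Hilbert series of both Jacobian rings as $\bigl((1-t^{\,d-1})/(1-t)\bigr)^{n+2}$, which is independent of the base field. Since base change along $\Zxp\to\k$ and $\Zxp\to\Qxp$ is exact on $R$ and carries $J$ onto the respective Jacobian ideals, one gets $(R/J)_m\otimes_{\Zxp}\k\simeq(R_\k/J_\k)_m$ and $(R/J)_m\otimes_{\Zxp}\Qxp\simeq(R_{\Qxp}/J_{\Qxp})_m$, of the same dimension $c_m$ for every $m$; a finitely generated module over the discrete valuation ring $\Zxp$ whose residue-field and fraction-field dimensions agree is free, so $(R/J)_{N_j}$ is $\Zxp$-free of rank $c_{N_j}$. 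On the other hand, tensoring $\overline{\phi}$ with $\Qxp$ recovers the classical Griffiths isomorphism $(R_{\Qxp}/J_{\Qxp})_{N_j}\isoto\Gamma^j_{\Qxp}/\Gamma^{j+1}_{\Qxp}$ in characteristic zero (\cite[\S~6]{voisin-2007-volII}), so $(\Gamma^j/\Gamma^{j+1})\otimes_{\Zxp}\Qxp$ also has dimension $c_{N_j}$. A surjection from a free $\Zxp$-module of rank $c_{N_j}$ onto a finitely generated $\Zxp$-module whose fraction-field dimension is $c_{N_j}$ must, by the structure theorem over a discrete valuation ring, have free target of rank $c_{N_j}$ and be an isomorphism; this finishes the proof.

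The step I expect to be the main obstacle is this last comparison: over $\Zxp$ there is no independent description of $\Gamma^j/\Gamma^{j+1}$ beyond the surjection $\overline{\phi}$ itself, so all control over its integral structure has to be extracted from the $\Zxp$-freeness of $(R/J)_{N_j}$ together with the characteristic-zero theorem, which makes it essential to keep track of exactly which base changes (of $R$, of $J$, of $G$, and of $\dd F$) are exact. The second delicate point is the integral validity of the pole-order reduction, i.e.\ which denominators appear along the way, but $p>n+1$ is comfortably sufficient for it.
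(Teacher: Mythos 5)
Your proof is correct, and it splits from the paper's argument halfway through. The first half — surjectivity plus the containment $J_{N_j}\subseteq\ker\bigl(R_{N_j}\to\Gamma^j/\Gamma^{j+1}\bigr)$ via the integral Griffiths relations, with $p>n+1$ guaranteeing that the multiplier $m=n-j\le n$ is a unit in $\Zxp$ — is exactly what the paper does (its proof also uses $p\nmid d$ to get $f\in J$, which your reduction does not need at this stage). For injectivity, however, the paper simply adapts Griffiths' pole-order argument integrally in both directions, asserting that an element of $G$ can have its pole order reduced modulo $\dd F$ \emph{only if} its numerator lies in $J$; you instead deduce injectivity by a rank count over the discrete valuation ring: $(R/J)_{N_j}$ is $\Zxp$-free because the Hilbert series of the Jacobian ring is the same over $\k$ and over $\Qxp$ (this is precisely the content of the paper's subsequent Lemma~\ref{lem:S_tors_free}, whose proof is independent of the present lemma, so no circularity), the base change of the surjection to $\Qxp$ is the classical characteristic-zero Griffiths isomorphism, and a surjection from a free module onto a finitely generated module of the same generic rank over a DVR has torsion-free kernel of generic rank zero, hence is an isomorphism. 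What your route buys is that you never have to re-verify the delicate "only if" half of Griffiths' reduction with integral coefficients — the characteristic-zero theorem is used as a black box — at the price of checking that all the relevant constructions ($J$, $G^j$, $\dd F$, $\Gamma^j$, and the quotient $\Gamma^j/\Gamma^{j+1}$) commute with the flat base change $\Zxp\to\Qxp$, which you correctly flag and which does hold since cohomology of line bundles on $\Pp^{n+1}_{\Zxp}$ and formation of images and quotients commute with flat base change. One small gloss: your claim that smoothness of $\cx$ alone forces the partials to have no common zero over the residue field tacitly uses Euler's relation $d\,f=\sum_i x_i\del_i f$ together with $p\nmid d$; the hypothesis is in force, but it should be said, as this is exactly where $p\nmid d$ enters your argument.
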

\begin{proof}
  This is standard in characteristic 0~\cite{voisin-2007-volII,griffiths--periods}. The image of $F \to G$ is generated by elements of the form
\begin{equation}
  \left( \frac{g}{f^m} - \frac{g f}{f^{m+1}}  \right) \vol_{\Pp}
  \quad \text{and} \quad
  \left( \frac{ \partial g / \partial x_i }{f^m} -  m\frac{g \partial f / \partial x_i}{f^{m+1}} \right)  \vol_{\Pp}
\end{equation}
each $m=1,\dots,n+1$ and $g\in R_{N_{n+1-m}}$. Since $p \nmid d$, $f \in J$ and since $p > n+1$ we can divide by $m$ in the relations above. It follows that, arguing as in~\cite[\S 4]{griffiths--periods}, the pole order of an element in $G$ can be reduced modulo $\dd F$ if and only if it is in the image of $J$.
\end{proof}

\begin{lemma}\label{lem:S_tors_free}
  If $p \nmid d$ then $R/J$, and in particular $\left( R/J \right)_N$ for each $N \ge 0$, is $\Zxp$-torsion-free.
\end{lemma}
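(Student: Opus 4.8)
The plan is to prove that multiplication by a uniformizer $\varpi$ of $\Zxp$ is injective on $S \colonequals R/J$; since $\Zxp$ is a discrete valuation ring this is exactly $\Zxp$-torsion-freeness, and it passes to each graded piece $(R/J)_N$, which is a $\Zxp$-direct summand of $S$. Write $\overline{R} \colonequals R\otimes_{\Zxp}\k = \k[x_0,\dots,x_{n+1}]$, $\overline{f} \colonequals f\bmod\varpi$, and $\overline{J} \colonequals (\del_0\overline{f},\dots,\del_{n+1}\overline{f})$, the Jacobian ideal of the special fibre.

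First I would check that $\del_0\overline{f},\dots,\del_{n+1}\overline{f}$ form a regular sequence in $\overline{R}$. Since $\cx$ is smooth over $\Zxp$, its special fibre $\cx_\xp\subset\Pp^{n+1}_{\k}$ is smooth over $\k$, so the Jacobian criterion gives that $\overline{f},\del_0\overline{f},\dots,\del_{n+1}\overline{f}$ have no common zero in $\Pp^{n+1}_{\overline{\k}}$. Here is where the hypothesis $p\nmid d$ enters: by Euler's identity $\sum_i x_i\del_i\overline{f}=d\cdot\overline{f}$, any common zero $P$ of the partials alone satisfies $d\,\overline{f}(P)=0$, hence $\overline{f}(P)=0$ since $d$ is invertible in $\k$, which would contradict smoothness; therefore already $\del_0\overline{f},\dots,\del_{n+1}\overline{f}$ have no common zero in $\Pp^{n+1}_{\overline{\k}}$. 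As these are $n+2$ forms of positive degree, their common zero locus in $\Aa^{n+2}_{\overline{\k}}$ is the origin, so $\overline{R}/\overline{J}$ is Artinian; thus $\del_0\overline{f},\dots,\del_{n+1}\overline{f}$ is a homogeneous system of parameters in the Cohen--Macaulay graded ring $\overline{R}$, hence a regular sequence. Consequently the Koszul complex $K_\bullet\bigl(\del_0\overline{f},\dots,\del_{n+1}\overline{f};\overline{R}\bigr)$ is a resolution of $\overline{R}/\overline{J}$, and in particular its homology vanishes in positive degrees.

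Next I would transfer this to $R$ by a flatness argument. The ring $R$ is free over $\Zxp$, so the Koszul complex $K_\bullet \colonequals K_\bullet(\del_0 f,\dots,\del_{n+1}f;R)$ consists of free $\Zxp$-modules, and $K_\bullet\otimes_{\Zxp}\k$ is precisely the complex over $\overline{R}$ considered above (differentiation commutes with reduction mod $\varpi$). Multiplication by $\varpi$ gives a short exact sequence of complexes $0\to K_\bullet\xrightarrow{\ \varpi\ }K_\bullet\to K_\bullet\otimes_{\Zxp}\k\to 0$, whose long exact homology sequence has tail
\[
 H_1\bigl(K_\bullet\otimes_{\Zxp}\k\bigr)\too H_0(K_\bullet)\xrightarrow{\ \varpi\ } H_0(K_\bullet)\too H_0\bigl(K_\bullet\otimes_{\Zxp}\k\bigr)\too 0.
\]
By the previous paragraph $H_1\bigl(K_\bullet\otimes_{\Zxp}\k\bigr)=0$, while $H_0(K_\bullet)=R/J=S$; hence $\varpi\colon S\to S$ is injective. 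This shows $S$ is $\Zxp$-torsion-free, and the statement for each $(R/J)_N$ follows at once.

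The only delicate step is the first one: it is exactly the input $p\nmid d$, via Euler's relation, that upgrades smoothness of $\cx_\xp$ to the statement that the partials of $\overline{f}$ by themselves cut out only the origin, which is what makes the Koszul complex over $\overline{R}$ acyclic in positive degrees. Without it, $H_1\bigl(K_\bullet\otimes_{\Zxp}\k\bigr)$ can be nonzero and $S$ can pick up $\varpi$-torsion. Everything after that is formal homological algebra together with the freeness of $R$ over $\Zxp$.
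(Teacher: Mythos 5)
Your proof is correct, and it takes a genuinely different route to the conclusion than the paper does, while sharing the same key geometric input. The paper argues by a dimension count over the two fibers: since $f \in J$ when $p \nmid d$ (Euler's relation) and $\cx$ is smooth, the partials form a regular sequence over both the residue field $\k$ and the fraction field $\Qxp$, so $S \otimes_{\Zxp} K$ is a zero-dimensional complete intersection cut out by $n+2$ forms of degree $d-1$ for either choice of $K$; its Hilbert series is therefore independent of $K$, and equality of the two fiber dimensions of each finitely generated graded piece over the discrete valuation ring $\Zxp$ forces freeness, hence torsion-freeness. You instead work entirely at the special fiber: the same smoothness-plus-Euler argument shows $\del_0\bar f,\dots,\del_{n+1}\bar f$ is a homogeneous system of parameters, hence a regular sequence, in the Cohen--Macaulay ring $\bar R$, so the Koszul complex over $\bar R$ is acyclic in positive degrees; then the long exact sequence for multiplication by the uniformizer (equivalently, the observation that $\operatorname{Tor}_1^{\Zxp}(S,\k)$ is controlled by $H_1$ of the special-fiber Koszul complex) gives injectivity of $\varpi$ on $S=R/J$ directly. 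Your approach buys independence from the generic fiber and makes the mechanism of torsion-freeness transparent (vanishing of a Tor group), at the cost of a little more homological machinery; the paper's approach is shorter once one quotes the standard fact that the Hilbert series of a graded complete intersection depends only on the degrees of the regular sequence. All the delicate points in your write-up check out: the identification $H_0(K_\bullet)=R/J$, the base change of the Koszul complex modulo $\varpi$, the flatness of $R$ over $\Zxp$ needed for the short exact sequence of complexes, and the reduction of torsion-freeness over a DVR to $\varpi$-injectivity, which indeed passes to the graded pieces $(R/J)_N$.
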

\begin{proof}
  Let $K$ stand for either the residue field or of the function field of $\Zxp$. In either case, with $S=R/J$, $S \otimes_{\Zxp} K$ is a local complete intersection ring of dimension zero. Since $J\otimes K$ is generated by $n+2$ elements forming a regular sequence each of degree $d-1$, the Hilbert series of $S\otimes K$ is independent of $K$. We used the smoothness of $\cx$ here and that $f \in J$ when $p \nmid d$. Since $\dim_K S\otimes K$ is independent of $K$, $S$ is torsion-free.
\end{proof}

\begin{lemma}
  If $p \nmid d$ and $p > n+1$ then $\Gamma$ is torsion-free.
\end{lemma}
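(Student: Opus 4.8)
The plan is to realize $\Gamma$ as a $\Zxp$-module equipped with a \emph{finite} filtration whose successive quotients are $\Zxp$-torsion-free, and then to conclude by the elementary principle already invoked at the end of the proof of Proposition~\ref{prop:torsion_free_quotients}: a module admitting a finite filtration with torsion-free graded pieces is itself torsion-free.

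The filtration to use is the pole-order filtration $\Gamma^\bullet$. First I would verify that it exhausts $\Gamma$ and terminates. One has $\Gamma^0 = \im(G^0 \to \Gamma) = \Gamma$ since $G^0 = G$; the inclusions $G^{j+1}\subseteq G^j$ give $\Gamma^{j+1}\subseteq \Gamma^j$; and $\Gamma^{n+1} = 0$, because $G^{n+1}$ is the image in $G$ of $\H^0(\Pp,\Omega^{n+1}_{\Pp/\Zxp})$, which vanishes ($\Omega^{n+1}_{\Pp/\Zxp}\simeq \co_{\Pp}(-n-2)$ has no nonzero global sections). Hence
\[
  \Gamma = \Gamma^0 \supseteq \Gamma^1 \supseteq \cdots \supseteq \Gamma^{n}\supseteq \Gamma^{n+1} = 0 .
\]
Under the standing hypotheses $p\nmid d$ and $p > n+1$, Lemma~\ref{lem:graded_gamma} identifies each graded piece $\Gamma^j/\Gamma^{j+1}$ (for $0\le j\le n$) with $(R/J)_{N_j}$, and Lemma~\ref{lem:S_tors_free} shows $(R/J)_{N_j}$ is $\Zxp$-torsion-free; so every graded piece of the displayed filtration is torsion-free. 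Finally, if $x\in\Gamma$ is annihilated by some nonzero $a\in\Zxp$, its image in $\Gamma^0/\Gamma^1$ is torsion, hence zero, so $x\in\Gamma^1$; iterating down the filtration forces $x\in\Gamma^{n+1} = 0$.

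I do not expect a real obstacle here: the argument is essentially bookkeeping on top of Lemmas~\ref{lem:graded_gamma} and~\ref{lem:S_tors_free}. The one point deserving a moment's attention is checking that the pole-order filtration genuinely bottoms out at $0$ — equivalently that $\H^0(\Pp,\Omega^{n+1}_{\Pp/\Zxp}) = 0$ — which is immediate from the structure of the top exterior power of the relative cotangent bundle of projective space. Alternatively, one could bypass the filtration entirely and mimic the Hilbert-series computation of Lemma~\ref{lem:S_tors_free} directly, comparing $\dim_{\Qxp}\Gamma\otimes_{\Zxp}\Qxp$ with $\dim_{\k}\Gamma\otimes_{\Zxp}\k$, but reusing the filtration already set up is shorter.
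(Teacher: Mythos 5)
Your proof is correct and follows essentially the same route as the paper: the paper's proof simply notes that Lemmas~\ref{lem:graded_gamma} and~\ref{lem:S_tors_free} make the graded pieces of the pole-order filtration on $\Gamma$ torsion-free, and invokes (implicitly, as it did explicitly in Proposition~\ref{prop:torsion_free_quotients}) the fact that a finitely filtered module with torsion-free graded pieces is torsion-free. Your extra check that the filtration terminates, via $\H^0(\Pp,\Omega^{n+1}_{\Pp/\Zxp})=0$, is a harmless elaboration of what the paper leaves implicit.
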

\begin{proof}
  Lemmas~\ref{lem:graded_gamma} and~\ref{lem:S_tors_free} imply that the graded pieces of $\Gamma$ are torsion-free.
\end{proof}

In light of the three lemmas above, we conclude that $\Gamma$ injects into $\Gamma \otimes_{\Zxp} \Qxp$, which in turn is isomorphic to $\Hlog \otimes \Qxp$ and $\Hprim \otimes \Qxp$.  Recall that the de~Rham cohomology of $\cx$ is torsion-free and by Proposition~\ref{prop:split_cohomology} so is $\Hlog$. Thus we have two injections:
\begin{equation}\label{eq:two_injections}
  \Gamma \toi \Hprim \otimes \Qxp \hookleftarrow \Hlog, \quad \text{if } p\nmid d ,\, p> n+1.
\end{equation}

\begin{proposition} \label{prop:gamma_equals_Hlog}
  If $p \nmid d$ and $p> n+1$ then the images of $\Gamma$ and $\Hlog$ in~\eqref{eq:two_injections} coincide. In particular, $\Gamma$ is isomorphic to $\Hlog$.
\end{proposition}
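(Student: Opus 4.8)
The plan is to realize $\Gamma$ and $\Hlog$ as two lattices inside the fixed $\Qxp$-vector space $V \colonequals \Hprim \otimes_{\Zxp}\Qxp$ and to show they coincide by comparing them with respect to a common filtration. Both modules are finitely generated and torsion-free over the discrete valuation ring $\Zxp$ -- for $\Gamma$ by the three lemmas above, for $\Hlog$ by Proposition~\ref{prop:split_cohomology} -- hence free, and tensoring either one with $\Qxp$ returns $V$ (via \eqref{eq:log_vs_U} and the characteristic-zero Griffiths description). So $\Gamma$ and $\Hlog$ are commensurable full lattices in $V$, and the task is to prove that they are in fact equal.

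First I would compare filtrations. Over $\Qxp$, Griffiths' theory~\cite{griffiths--periods, voisin-2007-volII} identifies the pole-order filtration $\Gamma^\bullet\otimes\Qxp$ on $V$ with the Hodge filtration $\F^\bullet\Hlog\otimes\Qxp$; call the resulting filtration $\F^\bullet V$. Both lattices are \emph{saturated} for $\F^\bullet V$: one has $\Gamma\cap\F^j V=\Gamma^j$ because $\Gamma/\Gamma^j$ is an iterated extension of the torsion-free modules $(R/J)_{N_i}$ (Lemmas~\ref{lem:graded_gamma} and~\ref{lem:S_tors_free}), and $\Hlog\cap\F^j V=\F^j\Hlog$ because the Hodge-graded pieces of $\Hlog$ are torsion-free (Proposition~\ref{prop:torsion_free_quotients}, applied to $\Hlog$ as a direct summand of $\H^{2r}_{\dR}(\cx/\Zxp)$ by Proposition~\ref{prop:split_cohomology}). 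For two commensurable saturated lattices $M\subseteq N$ the length of $N/M$ is the sum over $j$ of the lengths of the cokernels of the inclusions $\F^j M/\F^{j+1}M\hookrightarrow\F^j N/\F^{j+1}N$ inside $\F^j V/\F^{j+1}V$; so, granting one of the two containments, say $\Hlog\subseteq\Gamma$, it suffices to check that $\F^j\Hlog/\F^{j+1}\Hlog=\Gamma^j/\Gamma^{j+1}$ as subgroups of $\F^j V/\F^{j+1}V$ for every $j$.

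The containment $\Hlog\subseteq\Gamma$ and the comparison of graded pieces are where the hypotheses $p\nmid d$ and $p>n+1$ do real work. For the containment I would use the natural morphism relating the log--de~Rham complex of $(\Pp,\cx)$ to the global pole-order forms computing $\Gamma$, in the form worked out in~\cite[\S\S 2.3--2.4]{abbott-kedlaya-roe-10} and reframed over $\Zxp$ as described at the start of Section~\ref{sec:computing_in_crystalline}. For the graded pieces, Lemma~\ref{lem:graded_gamma} already gives $\Gamma^j/\Gamma^{j+1}\cong (R/J)_{N_j}$ compatibly with $\F^j V/\F^{j+1}V\cong (R/J\otimes\Qxp)_{N_j}$; on the $\Hlog$ side I would invoke the integral degeneration of the Hodge--de~Rham spectral sequence of the pair $(\Pp,\cx)$ (Deligne--Illusie~\cite{deligne-illusie, illusie-91}, legitimate since $p$ exceeds the relative dimension and the pair lifts to $\Zxp$), giving $\F^j\Hlog/\F^{j+1}\Hlog\cong \H^{n+1-j}(\Pp,\Omega^j_{\Pp/\Zxp}(\log\cx))$, and then the Poincar\'e residue sequence $0\to\Omega^j_{\Pp/\Zxp}\to\Omega^j_{\Pp/\Zxp}(\log\cx)\to\Omega^{j-1}_{\cx/\Zxp}\to 0$ together with \eqref{eq:cohom_of_Pn} to identify the primitive part with $(R/J)_{N_j}$ over $\Zxp$. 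Comparing the two identifications then closes the argument via the length count.

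I expect the genuine obstacle to lie in the integral bookkeeping of this last step. Over $\Qxp$ the entire statement is the classical Griffiths dictionary; the content of the integral refinement is only that the pole-order reduction procedure behind Lemma~\ref{lem:graded_gamma} introduces no denominators beyond $1/d$ and $1/m$ with $m\le n+1$, which is exactly what $p\nmid d$ and $p>n+1$ guarantee. The subtle point is to be sure that the Hodge-theoretic isomorphism $\F^j\Hlog/\F^{j+1}\Hlog\cong (R/J)_{N_j}$ and the pole-order isomorphism of Lemma~\ref{lem:graded_gamma} are literally the same map $\F^j V/\F^{j+1}V\to (R/J\otimes\Qxp)_{N_j}$, and not merely two isomorphisms of abstract $\Zxp$-modules -- otherwise equality of the lattices on graded pieces could fail even though their ranks agree. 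If reconstructing the explicit comparison turns out to be delicate, the cleanest fallback is simply to cite the integral comparison already carried out in~\cite[\S\S 2.3--2.4]{abbott-kedlaya-roe-10}.
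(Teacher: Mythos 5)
Your argument does not close as written, and the gap sits exactly where the real content of the statement lies. First, the containment $\Hlog\subseteq\Gamma$ is asserted but not established: at the integral level there is no natural map of complexes in either direction between the logarithmic de~Rham complex of $(\Pp,\cx)$ and the global pole-order forms defining $\Gamma$ (log forms have only simple poles, while the pole-order complex is not a subcomplex of the log complex), so both lattices are only given as submodules of $\Hprim\otimes\Qxp$ via~\eqref{eq:two_injections} and~\eqref{eq:log_vs_U}, and proving either containment already requires the denominator bookkeeping in the Griffiths--Dwork reduction (that only $d$ and integers $m\le n+1$ are inverted) -- which is precisely the substance of the result you are trying to prove, not an input you can quote from the general setup of \cite[\S\S 2.3--2.4]{abbott-kedlaya-roe-10}. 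Second, even granting a containment, your length count does not finish: knowing that $\Gamma^j/\Gamma^{j+1}$ and $\F^j\Hlog/\F^{j+1}\Hlog$ are each abstractly isomorphic to $(R/J)_{N_j}$ (Lemmas~\ref{lem:graded_gamma}, \ref{lem:S_tors_free}, Proposition~\ref{prop:split_cohomology}, Proposition~\ref{prop:torsion_free_quotients}) only says the graded sublattices of $\F^jV/\F^{j+1}V$ have equal rank; an inclusion of equal-rank lattices such as $p\Zxp\subset\Zxp$ still has nonzero cokernel. You flag this yourself ("literally the same map"), but flagging it is not resolving it, and without that compatibility the filtration-saturation scheme proves nothing beyond what was already known rationally.

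The honest completion of your argument is your fallback: cite the integral comparison of Abbott--Kedlaya--Roe. That is exactly what the paper does -- its entire proof is a citation of Remark~3.4.4 and Corollary~3.4.7 of \cite{abbott-kedlaya-roe-10}, which is where the valuation bounds on the pole-reduction procedure are carried out and the two lattices are compared under the hypotheses $p\nmid d$, $p>n+1$. So your proposal, to the extent it is correct, collapses onto the paper's one-line proof, except that you point to the wrong place in that reference (\S\S 2.3--2.4 concern the cohomology of pairs and the Frobenius setup, not the integral lattice comparison, which is \S 3.4). If you want a genuinely self-contained proof along your lines, the missing ingredient you must supply is an explicit integral map (or explicit matrices for the change of basis) realizing the graded comparison compatibly with the two identifications with $(R/J)_{N_j}$, with denominators controlled by $d$ and $m\le n+1$.
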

\begin{proof}
  This is now a consequence of Remark~3.4.4 and Corollary~3.4.7 of~\cite{abbott-kedlaya-roe-10}.
\end{proof}

Impose the grevlex monomial ordering on $R$ and on $R \otimes \k \simeq \Ff_q[x_0,\dots,x_{n+1}]$. As a result, there is a natural ordered monomial $\k$-basis $B''$ for the quotient $R/J \otimes \k$. Let $B' \subset R$ be monomials that map to $B''$. By Nakayama's lemma, $B'$ is a $\Zxp$-basis for the free-module $R/J$.
We will write $B \subset \Hprim$ for the image of $B' \cap \bigoplus_{j=0}^{n} \left( R/J \right)_{N_j}$ with respect to the following composition of maps:
\begin{equation}
 \bigoplus_{j=0}^{n} \left( R/J \right)_{N_j} \tos \Gamma \isoto \Hprim.
\end{equation}

\begin{definition}\label{def:griffiths_basis}
  When $p \nmid d$ and $p > n+1$ then $B \subset \Hprim$ will be called \emph{the Griffiths basis} for the primitive cohomology of $\cx$.
\end{definition}

A particularly potent feature of the Griffiths basis is that it ``respects the Hodge filtration'' in the following sense. Write $B = (w_1,\dots,w_s)$ in increasing grevlex ordering. If $\F^\bullet \Hprim$ is the Hodge filtration on the primitive cohomology then
\begin{equation}
  \forall j =0,\dots,n,\, \exists i_j,\,\, \text{s.t.}\,\, \F^j \Hprim = \Zxp \langle w_1,\dots,w_{i_j} \rangle.
\end{equation}
This follows from the fact that pole order filtration on $\Gamma\otimes \Qxp$ coincides with the Hodge filtrations on $\H_{\dR}^n(U/\Qxp)$ and on $\Hprim \otimes \Qxp$~\cite{voisin-2007-volII}.

\subsubsection{Approximating the Frobenius matrix in terms of the Griffiths basis}\label{sec:approximate_frobenius}

We comment briefly on how to compute an approximation of Frobenius action $\Frob_\k$ on the primitive cohomology $\Hprim$ of a hypersurface $\cx$. We will focus on the even dimensional case, but the odd dimensional case requires minimal change.
We recommend~\cite{abbott-kedlaya-roe-10}, \cite{costa-phd} or~\cite{chk} for a more detailed discussion.
We assume $p \nmid d$ and $p > n+1$ throughout.

Recall $\cu = \Pp \setminus \cx$, $U = \cu \otimes \Qxp$ and that the de~Rham cohomology $\Hu \colonequals \H^{n+1}_{\dR}(U/\Qxp)$ of $U$ is identified with $\Hprim \otimes \Qxp$. Note also that $\Hprim$ is torsion-free and the natural inclusion
\begin{equation}
  \Hprim(-1) \toi \Hprim \otimes \Qxp \isoto \Hu
\end{equation}
is equivariant with respect to the Frobenius action (\cite[Proposition~2.4.1]{abbott-kedlaya-roe-10}).

In the previous section, we represented elements of $\Hprim$ and of $\Hu$ using polynomials. We will work with the Griffiths basis $B \subset \Hprim$. Let $x^{\beta_1},\dots, x^{\beta_s} \in \Zxp[x_0,\dots,x_{n+1}]$ be the monomials that map to $B$. The image of $B$ in $\Hu$ is given by~\eqref{eq:R_to_G}. Namely, with $l_i = \big(\deg\big(x^{\beta_i}\big)+n+2\big)/d$ the following elements give the corresponding basis in $\Hu$:
\begin{equation}
  \eta_i \colonequals \frac{x^{\beta_i}}{f^{\ell_i}}\vol_{\Pp} \bmod G,\quad i=1,\dots,s.
\end{equation}

Here, one switches to another cohomology theory. Since $U$ is affine, $\Hu$ can be computed using the Monsky--Washnitzer cohomology. The advantage of Monsky--Washnitzer cohomology in this context is that it provides great flexibility in how one can choose to represent the Frobenius matrix in the chain of forms that compute the cohomology~\cite[Definition 2.4.2]{abbott-kedlaya-roe-10}.

The Frobenius action can be described on $\eta_i$ and Griffiths--Dwork reduction allows one to re-cast $\Frob_\k(\eta_i)$ in terms of the Griffiths basis again. The catch is that $\Frob_\k(\eta_i)$ becomes an infinite sum, each term having coefficients with higher and higher valuation. Truncating the infinite sequence and applying the Griffiths--Dwork reduction gives an approximation of $\Frob_\k(\eta_i)$ in the Griffiths basis. The degree of truncation required to attain the desired precision is discussed in~\cite[\S 3.4]{abbott-kedlaya-roe-10}.

\section{Main Algorithm}\label{sec:algorithm}

This section states and explains the steps of the main algorithm of this paper, Algorithm~\ref{alg:main}.
The version here provides an upper bound for the geometric middle Picard number of a given smooth hypersurface.
A simple variation gives bounds on the geometric Picard number of a Jacobian, see~Section~\ref{sec:jacobian}.
More sophisticated variations allow for the study of non-degenerate hypersurfaces in simplicial toric varieties~\cite{chk}.

The algorithm takes the equation $f \in K[x_0,\dots,x_{2r+1}]$ of a smooth hypersurface $X \subset \Pp^{2r+1}_K$ as input.
The output is an upper bound for the geometric middle Picard number, $\rho^r(X \times_K \overline{K})$, of~$X$.
Optionally, one may choose to specify a lower bound for the precision $N$ used to approximate the Frobenius map. For small $N$, the upper bound may improve as $N$ is increased. The bound will eventually stabilize. One may also provide a lower bound on the characteristic $p$ of the prime $\xp \subset \co_K$ to be used for the good reduction of $X$.

\stepcounter{ouralgo}

\begin{algorithm}[hb]
  \SetAlgoRefName{\theouralgo}
  \DontPrintSemicolon
  \SetAlgoVlined
  \SetCommentSty{}
  \SetKwInOut{Input}{Input}
  \SetKwInOut{Output}{Output}
  \SetKwFunction{nextgoodprime}{next\_good\_prime}
  \SetKwFunction{griffithsbasis}{griffiths\_basis}
  \SetKwFunction{frobenius}{frobenius}
  \SetKwFunction{lcmrootorder}{lcm\_root\_order}
  \SetKwFunction{cp}{weil\_characteristic\_polynomial}
  \SetKwFunction{BoundRank}{BoundRank}
  \SetKwFunction{tatebasis}{tate\_basis\_matrix}
  \SetKwFunction{projection}{projection\_map}
  \SetKwFunction{Rank}{bound\_rank}
  \SetKwFunction{cycfac}{cyclotomic\_factors}
  \SetKwFunction{stack}{column\_matrix}

  \SetKwFunction{Max}{max}
  \SetKwData{primebound}{char\_bound}
  \SetKwData{precbound}{precision\_bound}
  \SetKwData{dimTi}{dimTi}
  \SetKwData{bound}{bound}
  \SetKwProg{Function}{function}{}{end function}
  \Input{
    $f \in \co_K[x_0, x_1, \dots, x_{2r +1}]$; Optional: $\precbound, \primebound \in \Nn$
  }
  \Output{
  An upper bound for the geometric middle Picard number of $X=Z(f)$.
    }
    \Function{\BoundRank{$f; \precbound=1, \primebound=3$}}{

      \tcp{Pick a good prime, with residue characteristic at least \primebound, \S~\ref{sec:pick a good prime}}
      $\xp \colonequals \nextgoodprime{f, \Max{2r + 6, \primebound}}$\;

      \tcp{Compute a Griffiths basis for primitive cohomology, \S~\ref{sec:compute griffiths}}
      $B \colonequals \griffithsbasis{f, $\xp$}$\;

      \tcp{Compute a minimal working precision, \S~\ref{sec:working prec}}
      $q := \# \k$, $m \colonequals \# B + 1$\;
      $N \colonequals $ \Max$\left(\precbound, \log_p \left( 2 \binom{m}{\left\lceil m/2 \right\rceil}{q^{r \left\lceil m/2 \right\rceil }}\right)\right)$
      \tcp*[l]{See equation~\eqref{eqn:weilbound}}

      \tcp{Compute an $N$-digit approximation of the Frobenius matrix, \S~\ref{sec:compute frob}}
      $[ \Frob_{\k,N} ] \colonequals \frobenius{f, $\xp$, N, B}$\;

      \tcp{Compute the characteristic polynomial of the Frobenius matrix, \S~\ref{sec:compute chi frob}}
      $\chi(q^{-r}\Frob_\k) \colonequals $\cp{$\Frob_{\k,N}, N$}\;

      \tcp{Represent the obstruction map, \S~\ref{sec:compute obstruction map}}
      $[\pi_{\cx}] \colonequals \projection{B}$
      \tcp*[l]{$\pi_{\cx} \colon \H^{2r} _{\dR} (\cx) \to \H^{2r} _{\dR} (\cx) / \F^{r} \H^{2r} _{\dR} (\cx)$}

      \tcp{Loop over the cyclotomic factors of $\chi(q^{-r} \Frob_\k)$, \S~\ref{sec:factor chi frob}}
      $\bound \colonequals 0$ \;
      \For{$\Phi_{i}^{\gamma_i} \ \in \cycfac(\chi(q^{-r} \Frob_\k))$}{

        \tcp{Approximate the space of Tate classes, \S~\ref{sec:approximate tate}}
        $B_{T_{i, N}} \colonequals $\tatebasis{$\Phi_{i}, \Frob_{\k,N}, N$} \tcp*[l]{columns form a basis for~$T_{i, N}$}

        \tcp{Approximate the map $\pi_i$, \S~\ref{sec:approximate pii}}
        $[ \pi_{i, N} ] \colonequals [\pi_{\cx} ] \cdot $ \stack{$1 , [\Frob_{\k,N}] , \cdots, [\Frob_{\k,N}  ]^{\deg \Phi_i -1}$}$ \cdot B_{T_{i, N}} $\;

        \tcp{Increment by the bound on the dimension of $L_i \colonequals \ker \pi_i$,  \S~\ref{sec:dim Li}}
        $\bound \colonequals \bound + \deg \Phi_i ^{\gamma_i} - \Rank([\pi_{i, N}])$ \\
  }
    \Return $\bound$.
  }
    \caption{Computing an upper bound for the geometric middle Picard number.}
    \label{alg:main}
  \end{algorithm}

\subsection{Clarification of the steps in the algorithm}

\subsubsection{Pick a good prime}\label{sec:pick a good prime}
Pick the first prime number $p$ exceeding $2r+6$ and {\tt char\_bound}. Choose an unramified prime $\frakp$ of $K$ lying above $p$. By clearing the denominators of the polynomial $f$ defining $X$, create a model $\cx/\Zxp$ of $X$.  Check if the model $\cx/\Zxp$ is smooth. If not, pick another prime and repeat.

It is possible to eliminate the simple but haphazard searching method above: Compute the discriminant of $X$ over $\Zz$ and avoid the primes dividing this discriminant. However, discriminants tend to be huge.

\subsubsection{Compute a Griffiths basis for primitive cohomology}\label{sec:compute griffiths}
In Section~\ref{sec:griffiths} we describe how to compute a Griffiths basis for the primitive part of $\H^{2r} _\dR (\cx/\Zxp) $.
Choosing the polarization to complete the basis, we represent the arithmetic
Frobenius map $\Frob_\k$ on the cohomology $\H^{2r} _{\dR}(\cx/\Zxp) \simeq \H^{2r} _\crys(X_\frakp/\Zxp)$ via the Berthelot comparison theorem, see Theorem~\ref{thm:comparison integral}.
This square matrix with $\Zxp$ entries will be denoted by $[ \Frob_\k ]$.

\subsubsection{Compute a minimal working precision}\label{sec:working prec}
The dimension of the middle cohomology of a hypersurface of dimension $2r$ and degree $d$ is given by the formula
\begin{equation}
m \colonequals\frac{(d-1)^{2 r+2}+2 d-1}{d}.
\end{equation}
In any case, $m=\# B + 1$ where $B$ is the Griffiths basis for the primitive cohomology.

The Weil conjectures imply that the reciprocal characteristic polynomial of $\Frob_\k$ on $\H^{2r} _{\dR}(\cx/\Zxp)$ has integer coefficients with constant term equal to $1$, i.e.,
\begin{equation}
  P_{2r}(X ,t) \colonequals \det(1 - t \Frob_\k) \in 1 + t \Zz[t].
\end{equation}
Moreover, this polynomial is completely determined by the coefficients of $t^i$ with $i = 0, \dots, \left\lceil m/2 \right\rceil$, and these coefficients have absolute value at most
\begin{equation}
  \label{eqn:weilbound}
\binom{m}{\left\lceil m/2 \right\rceil}{q^{r \left\lceil m/2 \right\rceil }}.
\end{equation}
Thus if $p^N$ exceeds twice this bound, then $P_{2r}(X ,t)$ is determined by its reduction modulo $p^N$.
The bound above might be significantly improved by employing Newton identities, see~\cite[slide 8]{kedlaya-oxfordtalk}.
If the precision requested by the user is not sufficient to lift $P_{2r}(X ,t)$, we increase it accordingly.

\subsubsection{Compute an $N$-digit approximation of the Frobenius matrix}\label{sec:compute frob}
With $N$ as above, we may compute a matrix $[\Frob_{\k,N}]$ approximating the matrix $[\Frob_\k]$ to $N$ $p$-adic digits as explained in~\cite{abbott-kedlaya-roe-10}, \cite{costa-phd} or~\cite{chk}. We sketched the idea in Section~\ref{sec:approximate_frobenius}.
In practice, one may use the library \verb|controlledreduction|\footnote{This library is made available in \texttt{SageMath}~\cite{sage} through the wrapper \url{https://github.com/edgarcosta/pycontrolledreduction}.}.

\subsubsection{Compute the characteristic polynomial of the Frobenius matrix}
\label{sec:compute chi frob}
As explained above we may deduce $P_{2r}$ from $\det(1 - t\Frob_{\k,N})$.
In practice, we compute $\chi(\Frob_{\k,N})(t) \equiv \chi(\Frob_\k)(t) \bmod{p^N}$ and lift each coefficient to the interval $[-p^N /2, p^N /2]$.
By the discussion above, the representatives of the last coefficients of $t^i$ for $i = \left\lceil m/2 \right\rceil, \ldots, m$ match exactly their lifts, and the remaining coefficients are deduced using the functional equation
\begin{equation}
\chi(\Frob_\k)(t) = \pm \bigl(q^{-r} t\bigr)^m \chi(\Frob_\k)\bigl(q^{2r}/t \bigr).
\end{equation}
The sign of the functional equation is the sign of $\det(\Frob_k)$, which we are able to compute to one significant $p$-adic digit.

  \subsubsection{Represent the obstruction map}\label{sec:compute obstruction map}
  The obstruction map $\pi_{\cx}$ (see \S~\ref{sec:obstruction_map}) annihilates the polarization.
  Thus, it remains to describe $\pi_{\cx}$ on the primitive cohomology.
  Since the Griffiths basis on the primitive cohomology respects the filtration (see \S~\ref{sec:griffiths}), the map $\pi_{\cx}$ on the primitive cohomology in the Griffiths basis is just the projection onto the last few coordinates. Let $[\pi_{\cx}]$ be the matrix representation of this projection.

\subsubsection{Extract cyclotomic factors from the characteristic polynomial}\label{sec:factor chi frob}
Factorize the characteristic polynomial over $\Qq[t]$ as in~\eqref{eqn:Qfact}.

    \subsubsection{Approximate the space of Tate classes}\label{sec:approximate tate}
    Let $\Phi_i(t)$ be a cyclotomic factor computed in the previous step, let $B_{T_{i, N}}$ be a basis for the eigenspace $T_{i, N} \colonequals \ker \bigl(\Phi_i \bigl( q^{-r} \Frob_{\k,N} \bigr)\bigr)$.
      This $T_{i, N}$ approximates $T^r _i(X_\xp) \colonequals \ker \bigl(\Phi_i \bigl( q^{-r} \Frob_{\k} \bigr)\bigr)$ from Proposition~\ref{prop:cylic_splitting}.

      The kernels are computed using standard algorithms that keep track of the $p$-adic precision, see~\cite{caruso-roe-vaccon-18}. The computations equip $T_{i,N}$ with a basis which we record in the columns of a matrix $B_{T_{i, N}}$.

\subsubsection{Approximate the map \texorpdfstring{$\pi_i$}{pi_i}}\label{sec:approximate pii}
  Restricting $\pi_{\cx}$ on to $T_{i,N}$, we obtain the approximation
    \begin{equation}
      \pi_{T_{i,N}} \colon T_{i,N} \to \H^{2r} _{\dR} (\cx) / \F^{r} \H^{2r} _{\dR} (\cx)
    \end{equation}
    of the obstruction map $\pi_{T_i}$ on $T^r _i(X_\xp)$.
    Therefore, the matrix representation $[\pi_{T_{i,N}}]$ of $\pi_{T_{i,N}}$ is then equal to $[\pi_{\cx}] \cdot B_{T_{i, N}}$.
    Similarly, we may approximate the map
  \begin{equation}
    \begin{aligned}
      \pi_i \colon T_i ^r (X_p) &\to \left(\H^{2r} _{\dR} (\cx) / \F^{r} \H^{2r} _{\dR} (\cx) \right)^{\deg \Phi_i} \\
      v &\mapsto (\pi(v), \pi\circ q^{-r}\Frob_\k(v), \dots, \pi \circ (q^{-r} \Frob_\k)^{\deg \Phi_i -1}(v)),
    \end{aligned}
\end{equation}
from Section~\ref{sec:improve_obstruction}, is approximated by
\begin{equation}
[ \pi_{i, N} ] \colonequals [\pi_{\cx} ] \cdot
\left(
\begin{tabular}{c}
1 \\
$\phantom{{}^{\deg \Phi_i -1}}q^{-r}[\Frob_{\k,N}]\phantom{{}^{\deg \Phi_i -1}}$\\
$\vdots$\\
$\phantom{{}^{\deg \Phi_i -1}}(q^{-r} [\Frob_{\k,N}  ])^{\deg \Phi_i -1}$
\end{tabular}
\right)
\cdot B_{T_{i, N}}.
\end{equation}

\subsubsection{Bound dimension of $L_i$}\label{sec:dim Li}
The approximation $\pi_{i, N}$ of $\pi_{i}$ allows for the computation of a lower bound $b_{i,N}$ on the rank of $\pi_{i}$, see for example~\cite[Algorithm~1.62]{abbott-kedlaya-roe-10}.
Although we will not know when this happens, if $N$ is large enough, then the rank $b_{i,N}$ of $\pi_{i,N}$ will match the rank of $\pi_{i}$.
    Nonetheless, as $L_i \colonequals \ker \pi_i$, we have
    \begin{equation}
      \dim L_i = \dim T_i - \rk \pi_{i} \leq \dim T_i - b_{i,N}.
    \end{equation}

\subsubsection{Return the upper bound on the middle Picard rank}
From the previous argument at the end of the for loop we have $\sum_i \dim_{\Qxp} L_i \leq \texttt{bound}$.
Combining this with Proposition~\ref{prop:even_better_bound} we obtain the sought inequality:
\begin{equation}\label{eq:algo_bound}
  \dim_{\Qq} A^{r} (X_{\overline{K}}) \leq \sum_i \dim_{\Qxp} L_i \leq \texttt{bound}.
\end{equation}

\begin{remark}
  Let us warn once again that, even in the favorable conditions provided by K3 surfaces, the inequality~\eqref{eq:algo_bound} can be sharp.
  See Example~\ref{ex:weak} for a demonstration.
\end{remark}

\section{Examples}\label{sec:examples}

We now give explicit illustrations of the methods developed in this paper.
We have implemented a version of Algorithm~\ref{alg:main}, called \verb|crystalline_obstruction|\footnote{For its implementation in \texttt{SageMath}~\cite{sage}, see \url{https://github.com/edgarcosta/crystalline_obstruction}.},
where the prime is also given as input.
We will show its basic usage below.

There are three sets of examples below.
In the first set, we work with Jacobians of curves.
The Hodge structure on their cohomology is inherited from the Hodge structure of the corresponding curves.
The advantage is that the dimension of the cohomology is small enough that we can write the Frobenius matrices explicitly.

In the second set, we illustrate the basic usage on surfaces in projective space.
We checked all of the Picard numbers in the \emph{quartic database}\footnote{One may view the database at \url{https://pierre.lairez.fr/quarticdb/}.} of~\cite{lairez-sertoz}.
In every case, our upper bounds agreed with the numbers listed there. We also comment on the performance gain in using the obstruction method.

In the third set, we give pathological examples. For example, the obstruction space associated to a quintic surface is four dimensional. We give an example where the image of the obstructed Tate classes span a one dimensional space although four dimensions of algebraic classes must be obstructed.

Our convention for writing a $p$-adic number modulo $p^N$ is as follows: given $a \in \Qp$ we write $a \equiv p^m \cdot b \bmod p^N$ where $m \in \Zz$, $b\in \Zz_{\ge 0}$, $p \nmid b$.

\subsection{Jacobians of plane curves}\label{sec:jacobian}

The discussion on hypersurfaces above allows us to compute the Hodge decomposition on the first cohomology of a smooth curve.  Since the cohomology of the Jacobian is isomorphic to the natural Hodge structure on the wedge powers of the first cohomology of the curve, we can treat Jacobians explicitly.  We can thus use Algorithm~\ref{alg:main} with minor modifications, as illustrated below.  The purpose of this change in context is to be able to display complete examples in print.

\begin{example}\label{ex:genus2 trivial} We start with a genus $2$ curve
  \begin{equation}
    C/\Qq \colon y^2 - (4x^5 - 36x^4 + 56x^3 - 76x^2 + 44x - 23) = 0.
  \end{equation}
  Choose the prime $\xp = (31)$ and write $f$ for the equation of $C$.
  This curve has LMFDB label \href{https://www.lmfdb.org/Genus2Curve/Q/1104/a/17664/1}{\textsf{1104.a.17664.1}}. Let $J$ denote the Jacobian of $C$. We will compute the geometric Picard number of $J$.

When given a curve, the code \texttt{crystalline\_obstruction} understands that the intention is to compute with the Jacobian of the curve. The command is simple:
\begin{verbatim}
sage: from crystalline_obstruction import crystalline_obstruction
sage: crystalline_obstruction(f, p=31, precision=3)
\end{verbatim}
The first entry of the output is the upper bound (in this case sharp) on the geometric Picard number of $J$, and the second is a dictionary recording relevant intermediate results.  The entire computation for this example takes less than a second giving the output:
\begin{verbatim}
(1, {'precision': 3, 'p': 31, 'rank T(X_Fpbar)': 2, 'factors': [(t - 1, 2)],
     'dim Ti': [2], 'dim Li': [1]})
\end{verbatim}

We will now walk through the intermediate steps of the computation. We first check that $J$ has good reduction over $\frakp$.
Let $\cc$ and $\cj$ denote the natural models of $C$ and $J$ over $\Zxp$, respectively.
As discussed in Section~\ref{sec:computing_in_crystalline}, we may compute an approximation of $\Frob_\k$ on $\H^{1} _\dR(\cc/\Zxp)\otimes_{\Zxp} \Qxp \simeq \H^{1} _\dR(\cj/\Zxp) \otimes_{\Zxp} \Qxp$ and its characteristic polynomial:

\begin{equation}
  \begin{aligned}
    \Frob|_{\H^{1} _\dR(\cc/\Qxp)} \equiv&
\left(\begin{array}{rrrr}
31 \cdot 482  & 31 \cdot 284  & 16241  & 3075  \\
31 \cdot 386  & 31 \cdot 886  & 2644  & 12126  \\
31 \cdot 284  & 31 \cdot 659  & 6336  & 9750  \\
31 \cdot 194  & 31 \cdot 876  & 27408  & 10841
\end{array}\right)
\pmod{31^3},
\\
    \chi =& 1 - 3t + 14t^{2} - 93t^{3} + 961t^{4}.
\end{aligned}
\end{equation}
The natural Hodge structures on both sides of the isomorphism $\H^{2} _\dR(\cj/\Qxp) \simeq \Lambda^2 \H^{1} _\dR(\cj/\Qxp)$ agree with one another. Thus the wedge products of the Griffiths basis we used for $\cc$ will reveal the Hodge structure on higher cohomologies of $\cj$.

From $\chi$ and the approximate Frobenius above we can deduce
\begin{equation}
  \begin{gathered}
    \det(t - \Frob|_{\H^{2} _\crys(\cj_\frakp)(1)}) = (t - 1)^2 (31t^{4} + 48t^{3} + 43t^{2} + 48t + 31)/31,
    \\
  \Frob|_{\H^{2} _\dR(\cj/\Qxp)} \equiv
\left(\begin{array}{rrrrrr}
31^{2} \cdot 19  & 31 \cdot 660  & 31 \cdot 776  & 31 \cdot 843  & 31 \cdot 506  & 22499  \\
31^{2} \cdot 18  & 31 \cdot 250  & 31 \cdot 459  & 31 \cdot 270  & 31 \cdot 683  & 10699  \\
31^{2} \cdot 3  & 31 \cdot 154  & 31 \cdot 636  & 31 \cdot 261  & 31^{2} \cdot 24  & 3010  \\
31^{2} \cdot 22  & 31 \cdot 557  & 31 \cdot 664  & 31 \cdot 392  & 31^{2} \cdot 23  & 10438  \\
31^{2} \cdot 30  & 31 \cdot 77  & 31 \cdot 516  & 31^{2} \cdot 26  & 31 \cdot 449  & 3650  \\
31^{2} \cdot 7  & 31 \cdot 668  & 31 \cdot 509  & 31 \cdot 277  & 31 \cdot 513  & 17591
\end{array}\right)
 \pmod{31^3},
\end{gathered}
\end{equation}
and
    $T^1(\cj_\xp) = \Span(v_1, v_2)$,
where, having lost a digit of precision, we have
\begin{equation}
  \begin{aligned}
  v_1 \equiv& \left(356,\,37,\,831,\,0,\,295,\,31\right) \pmod{31^2}
  \\
  v_2  \equiv& \left(4,\,957,\,3,\,1,\,0,\,0\right) \pmod{31^2}.
  \end{aligned}
\end{equation}
The Hodge numbers for $\H^{2} _\dR(\cj/\Qxp)$ are $(1, 4, 1)$, and the projection
\begin{equation}
  \pi_{\cj} \colon \H^{2} _\dR(\cj/\Qxp) \to \H^{2} _\dR(\cj/\Qxp) / \F^1 \H^{2} _\dR(\cj/\Qxp) \simeq \Qxp
\end{equation}
corresponds to the projection onto the last coordinate in the basis chosen above. This gives
\begin{equation}
  \pi_T \equiv
  \left(
  \begin{array}{cc}
    31 & 0
  \end{array}
  \right) \pmod{31^2}
\end{equation}
in the basis $v_1,v_2$ for $T^1(\cj_\xp)$.  Thus, the rank of the obstruction map $\pi_T$ is at least $1$. This is the first entry in the output of our code.

We also know that the polarization lifts and thus $\dim_\Qq A^1(J_{\overline{\Qq}})_\Qq = \rk \End(J_{\overline{\Qq}}) = 1$.
\end{example}

\begin{example}
  Let us now look at an example where the rank of the Jacobian is known to be $2$.
  In this case, an upper bound of $2$ will not prove that the rank is indeed $2$. However, our numerical methods has the advantage that \emph{if} the rank was $1$ then the obstruction map would be non-zero at high enough precision. Observing that the obstruction is zero to higher and higher precision would be compelling evidence that the rank is $2$.

  We pick the genus $2$ curve $C \colon y^2 = x^{5} - 2x^{4} + 7x^{3} - 5x^{2} + 8x + 3$ over $\Qq$ with LMFDB label \href{https://www.lmfdb.org/Genus2Curve/Q/30976/a/495616/1}{\textsf{30976.a.495616.1}}.
  Mutatis mutandis, we may repeat Example~\ref{ex:genus2 trivial}, for $\xp \neq (2), (11)$. However, the Jacobian $J$ of $C$ will now have real multiplication by $\Qq(\sqrt{17})$, see~\cite{cmsv}.
  Therefore, $\dim_{\Qq} A^1(J_{\overline{\Qq}}) =2$.

Working with finite precision, it is impossible conclude that the obstruction map is identically zero.
We observe however that even over a large prime such as $p = 4999$ the obstruction map on the two dimensional $T^1(\cj/\Qxp)$ is zero modulo $p^{100}$. Even with these large numbers, the computation took about 3 minutes.
\end{example}

\begin{example} In this example, we will show how one can obstruct more than one linearly independent cycle at a single prime. Unlike~\cite{elsenhans-jahnel-11b}, this method does not require an investigation of the geometry of algebraic cycles.

  Consider the following genus 3 plane curve, its defining equation will be denoted by $f$:
  \begin{equation}
  C/\Qq \colon x y^{3} + x^{3} z - x y^{2} z + x^{2} z^{2} + y^{2} z^{2} - y z^{3} = 0.
  \end{equation}

  The following command takes less than a second to return the answer, which shows that $\dim A^1(\overline{J})=1$ where $J/\Qq$ is the Jacobian of $C$:
\begin{verbatim}
sage: crystalline_obstruction(f, p=31, precision=3)
(1, {'precision': 3, 'p': 31, 'rank T(X_Fpbar)': 3, 'factors': [(t - 1, 3)],
     'dim Ti': [3], 'dim Li': [1]})
\end{verbatim}
With $\xp=(31)$ and $\cc/\Zxp$, $\cj/\Zxp$ as in Example~\ref{ex:genus2 trivial}, we demonstrate the intermediate steps:
\begin{equation*}
  \begin{aligned}
    \chi =&  1  + 78t^{2} +  408t^{3} + 2418t^{4} + 7688t^{5} + 29791t^{6}
    \\
    \Frob|_{\H^{1} _\dR(\cc/\Qxp)} =&
\left(\begin{array}{rrrrrr}
31 \cdot 104  & 31 \cdot 218  & 31 \cdot 7  & 27783  & 2569  & 7195  \\
31 \cdot 351  & 31 \cdot 494  & 31 \cdot 690  & 19524  & 8323  & 1421  \\
31 \cdot 50  & 31 \cdot 237  & 31 \cdot 829  & 13467  & 20050  & 19610  \\
31 \cdot 19  & 31 \cdot 733  & 31 \cdot 377  & 20592  & 23805  & 15085  \\
31 \cdot 482  & 31 \cdot 610  & 31 \cdot 793  & 12397  & 28951  & 6604  \\
31 \cdot 710  & 31 \cdot 860  & 31 \cdot 689  & 19294  & 18382  & 25376
\end{array}\right)
\pmod{31^3}.
  \end{aligned}
\end{equation*}
Now the Hodge numbers for $\H^{2} _\dR(\cj/\Qxp)$ are $(3, 9, 3)$ and we have
  \begin{multline}
    \det(t -  \Frob|_{\H^{2} _\crys(\cj_\frakp)(1)}) = \frac{1}{31^3} (t - 1)^3
    \bigl(31^3
    + 15 \cdot 31^2 t
    -642 \cdot 31 t^2
  \\
    + 14545 t^3
    + 36060  t^{4}
    + 18063  t^5
    + 3172 t^6
    + 18063  t^7
    + \cdots\bigr).
  \end{multline}
  We find that the space of Tate classes, $T^1(\cj/\Qxp)=\ker \left( \Frob_\k - 1 \right)$, is three dimensional. The obstruction map $\pi_T$ is approximated by the following matrix with respect to a basis:
\begin{equation}
\pi_T \equiv \left(\begin{array}{rrr}
31 \cdot 240  & 0 & 31 \cdot 1  \\
31 \cdot 515  & 31 \cdot 1  & 0 \\
0 & 0 & 0
\end{array}\right)
\pmod{31^3}.
\end{equation}
Thus, $\rk \pi_T \geq 2$ and we conclude $\dim A^1(J_{\Qbar}) = \dim \End(J_{\Qbar}) = 1$.
\end{example}

\begin{example}
  We may also use our method to directly bound the dimension of the geometric endomorphism algebra of an abelian variety.
  We do this via divisorial correspondences.

  If $X$ and $Y$ are two projective smooth varieties over a number field $K$, then
  \begin{equation}
    \begin{aligned}
      \NS(X \times Y) &= \NS(X) \oplus \NS(Y) \oplus \DC(X,Y)\\
                      &\subset \H^2 _\dR(X/K) \oplus \H^2 _\dR(Y/K) \oplus \H^1 _\dR(X/K) \otimes \H^1 _\dR(Y/K),
  \end{aligned}
  \end{equation}
  where $\NS$ denotes the Néron--Severi group and $\DC$ denotes divisorial correspondences. The Néron--Severi group $\NS$ coincides with our $A^1$.

  Furthermore, by \cite[VI \S 2 Thm 2]{lang-1983} we have the following isomorphism
  \begin{equation}
    \DC(X, Y) \simeq \operatorname{Hom}(\Alb(X), \xP(Y)),
  \end{equation}
  where $\Alb(X)$ is the Albanese variety of $X$ and $\xP(Y)$ is the Picard variety of $Y$.

  Note that the Hodge structure of  the tensor $\H^1 _\dR(\cX/\Qp) \otimes \H^1 _\dR(\cy/\Qp)$ is induced by the Hodge structure of its two factors. Therefore, knowing the Hodge structure on the factors allows us to apply our method to bound the dimension of $\DC(X_{\Qbar}, Y_{\Qbar})$.

  For example, taking $X=Y=C$ for a curve $C/K$, we can bound the dimension of the endomorphism algebra $\End(J_{\Qbar})$ of the Jacobian $J$ of $C$. The method is analogous to the previous examples. We let $\cj/\Zxp$ denote the natural model of $J$ and replace $\H^{2} _\dR(\cj/\Qxp)$ with $\H^{1} _\dR(\cj/\Qxp) \otimes \H^{1} _\dR(\cj/\Qxp)$ in the computations. 

  Our code automates this procedure with the keyword argument \verb|tensor=True|. For example, take the smooth plane curve
  \begin{equation}
    C/\Qq \colon -y^{4} + x^{3} z + 2 x^{2} z^{2} - x z^{3} = 0.
  \end{equation}
  The following commands show that $\dim \End(J_{\Qbar}) \leq 9$ and $\dim \NS(J_{\Qbar}) \leq 5$, respectively.
\begin{verbatim}
sage:crystalline_obstruction(f, p=43, precision=4,tensor=True)
(9, {'precision': 4, 'p': 43, 'rank T(X_Fpbar)': 18,
  'factors': [(t + 1, 4), (t - 1, 6), (t^2 + 1, 4)],
  'dim Ti': [4, 6, 8], 'dim Li': [2, 3, 4]})
sage:crystalline_obstruction(f, p=43, precision=4)
(5, {'precision': 4, 'p': 43, 'rank T(X_Fpbar)': 7,
  'factors': [(t + 1, 2), (t - 1, 3), (t^2 + 1, 1)],
  'dim Ti': [2, 3, 2], 'dim Li': [1, 2, 2]})
\end{verbatim}

\end{example}

\subsection{Surfaces in projective space}

Quartic surfaces and quintic surfaces are the prime examples in this section. We present some successful examples and some examples that demonstrate inherent weaknesses of the method.

\subsubsection{K3 surfaces}

We begin with quartic smooth surfaces in $\ppp$.
The middle cohomology of a K3 surface has dimension 22 with Hodge numbers~$(1, 20, 1)$.

\begin{example}\label{ex:database}
  We applied our Algorithm~\ref{alg:main} to each of the \numprint{184725} quartic surfaces in the \emph{quartic database}\footnote{One may explore the database at \url{https://pierre.lairez.fr/quarticdb/}.}~\cite{lairez-sertoz}. For each quartic, we found an upper bound that agreed with the Picard number listed there (and, as expected, could not find a smaller upper bound).

We will now compare the performance improvement of using the obstruction map. We distinguish between three different methods: The \emph{Galois obstruction method} applies Algorithm~\ref{alg:main} thereby using the obstruction map on each of the Galois orbits separately as in~Proposition~\ref{prop:even_better_bound}. The \emph{vanilla obstruction method} deviates from Algorithm~\ref{alg:main} only in ignoring the Galois structure on the space of Tate classes, that is, the obstruction map is considered on the entire space of Tate classes.
For each surface, we start with the prime $23$ and move up to the next prime if the upper bound does not match the number listed in the database or if the prime is not a good prime.
We skip the primes less than $23$, as the computation to deduce the Hasse--Weil zeta for these requires more precision~\cite[Section~1.6.2]{costa-phd}, and hence more time.

In applying the \emph{van~Luijk method}~\cite{vanluijk-07} to each surface, it would make sense to stop after \emph{two primes} have been found where the upper bound is optimal, whereby the discriminants can be compared. This means it would require, on average, twice as many prime reductions as the vanilla obstruction method. Almost the entirety of the computation per prime is spent on computing the zeta function, a feature common to all three methods. Therefore, the van~Luijk method would take roughly twice as long as the vanilla obstruction method. On Figure~\ref{fig:number_of_attempts}, its slope would be half that of the vanilla obstruction --- or about one fifth of the Galois obstruction method.

  The entire computation for the Galois obstruction method took about 10 months of CPU time.
  The vanilla obstruction method takes about 16 months. Arguing as above, we expect the van~Luijk method to take about 32 months. The Figures~\ref{fig:number_of_attempts} and~\ref{fig:time_vs_percentile} give more detailed information.

\begin{figure}
  \centering
  \includegraphics[width=0.9\linewidth]{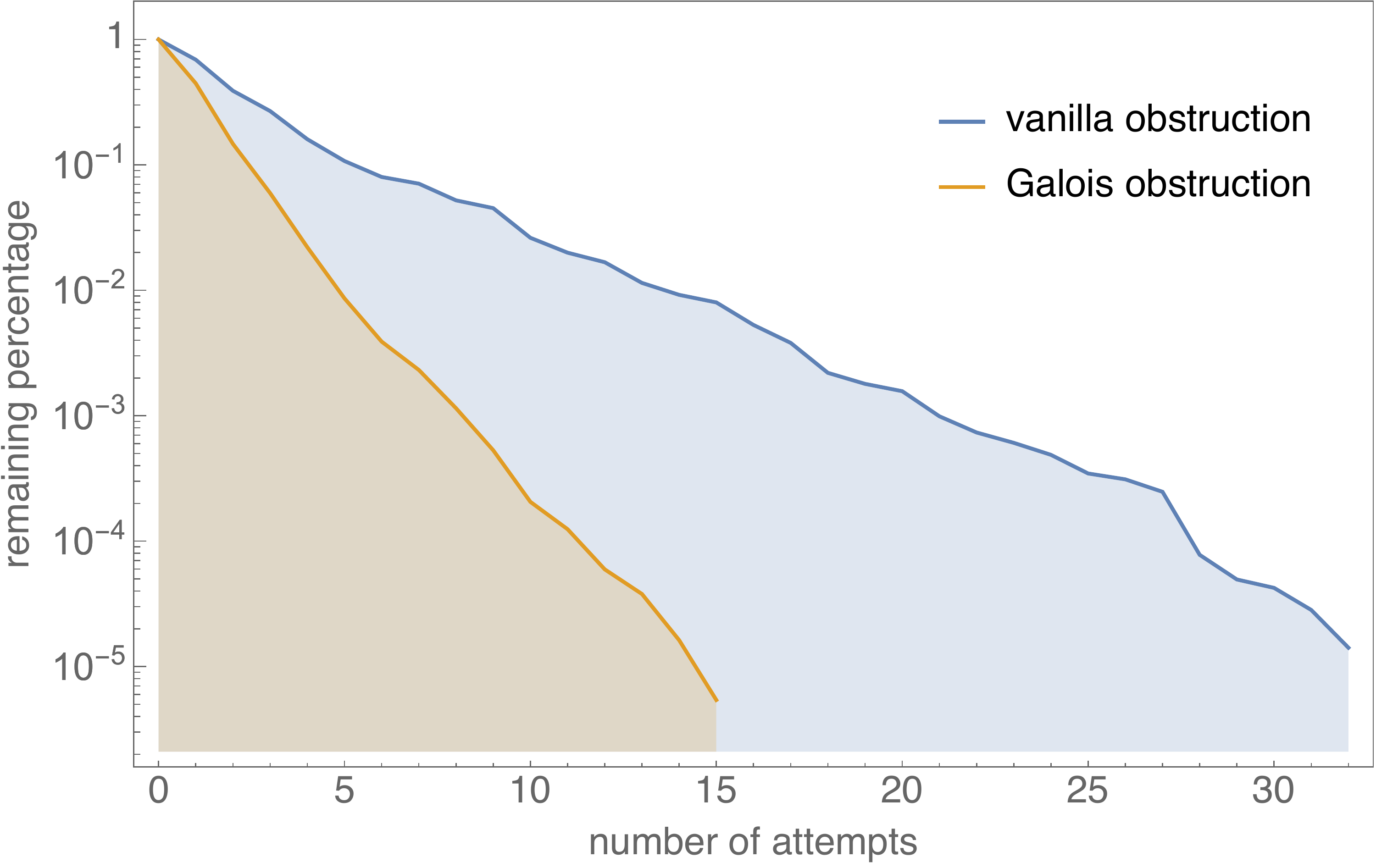}
  \caption{The percentile of surfaces from the quartic database that needed a certain number of prime reductions until sharp upper bound is attained.}\label{fig:number_of_attempts}
\end{figure}

\begin{figure}
  \centering
  \includegraphics[width=0.9\linewidth]{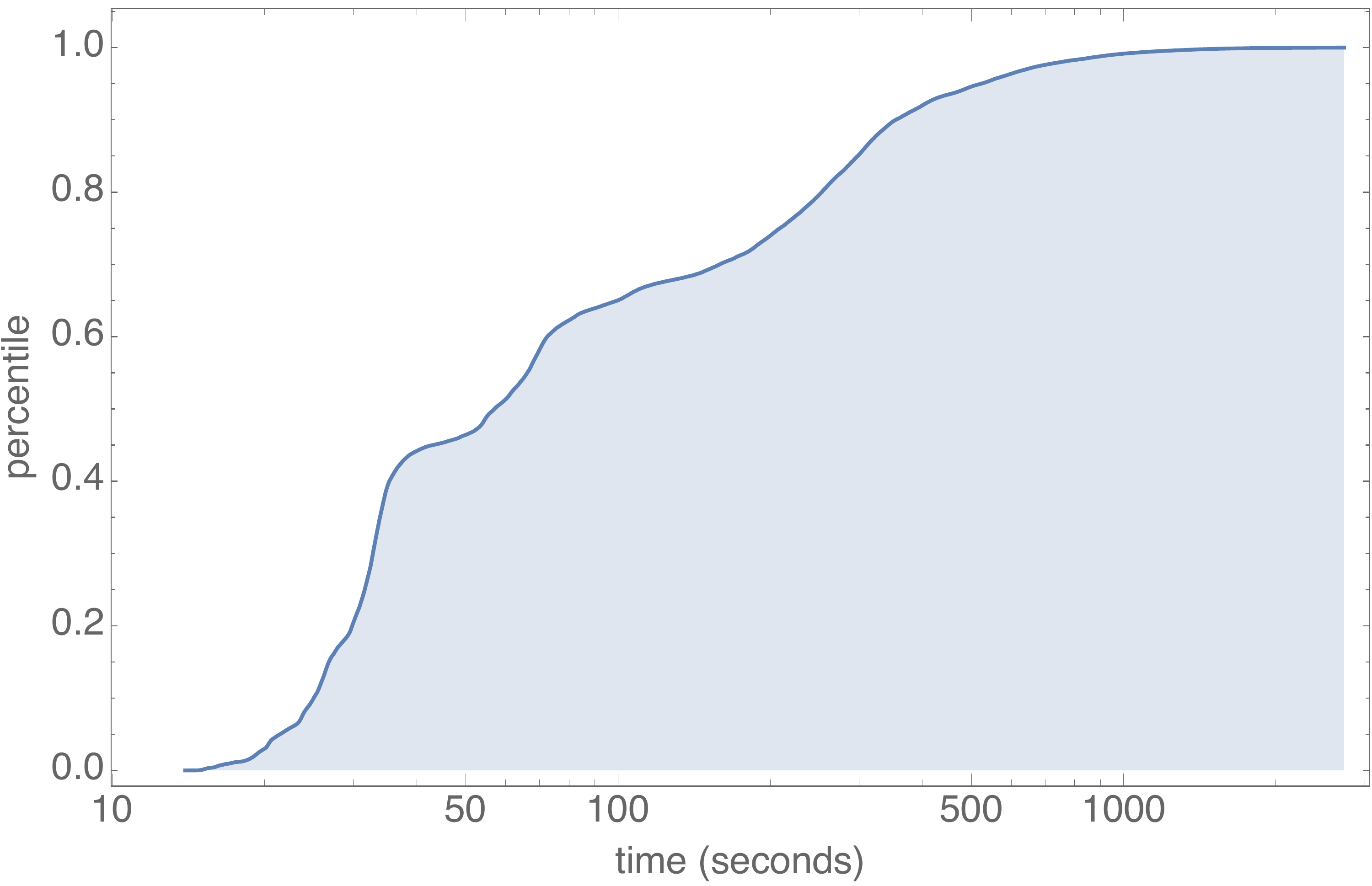}
  \caption{Percentile of surfaces from the quartic database against time needed until a sharp bound is attained via Galois obstruction method.}\label{fig:time_vs_percentile}
\end{figure}
\end{example}

\begin{example} In this example, we give a simple but artificial example which demonstrates that there is no \emph{global} upper bound on how much precision must be used to reach the optimal bounds of the method.

Fix a prime $p$ and let $\lambda= p^m$ with large $m\gg0$. Consider the following two elements of the Dwork pencil
  \begin{eqnarray}
    X/\Qq \colon& x^4 + y^4 + z^4 + w^4 + \lambda x y z w = 0, \\
    Y/\Qq \colon& x^4 + y^4 + z^4 + w^4 = 0.
  \end{eqnarray}
  The Fermat quartic $Y$ has Picard number 20 and it is known that $Y_p$ has Picard number 20 when $p \equiv 1 \bmod 4$. Clearly, the two models $X$ and $Y$ lifting $X_p=Y_p$ are indistinguishable modulo $p^m$. If $X$ has Picard number 19, then the obstruction map will be zero modulo $p^m$ as it will not be able to make the distinction between $X$ and $Y$.
 Although, when the $p$-adic precision is sufficiently large, the obstruction becomes non-zero.
\end{example}

\begin{example}\label{ex:galois}
  Take $\frakp = (89)$ and consider the following quartic surface
  \begin{equation}\label{eqn:K3_galois}
    X/\Qq \colon y^{4} - x^{3} z + y z^{3} + z w^{3} + w^{4} = 0.
  \end{equation}
  The characteristic polynomial of $89^{-1} \Frob_\xp$ on $\H^2 _{\crys}(X_\frakp / \Qxp)$ factors as
  \begin{equation}
   (t - 1)^{5} \cdot (t + 1) \cdot (t^{4} + 1) \cdot (89  + 188 t^{2} + 303 t^{4} + 316 t^{6} + 303 t^{8} + 188 t^{10} + 89 t^{12})/89.
  \end{equation}
  Therefore, the geometric Picard number of $X$ is at most $10$.  By computing an approximation of the obstruction map on the Tate classes, we would at most improve this bound by $1$. Instead, we use Corollary~\ref{cor:better_bound} to drop the rank from $10$ to $4$.

  For instance, we observe that the obstruction is non-zero on the space
  \begin{equation}
    \Ttilde^r _8 (X_\frakp) \colonequals \ker( \Frob_k ^4 + p^4 ) \subset \H^{2} _{\crys} (X_\xp/\Zz_\xp).
  \end{equation}
  This already allows us to drop from $10$ to $6$, since $\deg(t^4 + 1) = 4$. Analogously, we work through the other two cyclotomic factors, observing a non-zero obstruction in each case. As these two cyclotomic polynomials are linear, we drop from $6$ to $4$.

  All of this is automated. One may use \verb|crystalline_obstruction| to deduce the bound~$4$. The following computation takes less than 4 minutes of CPU time.
\begin{verbatim}
sage: crystalline_obstruction(f, p=89, precision=3)
(4,
 {'precision': 3, 'p': 89, 'rank T(X_Fpbar)': 10,
  'factors': [(t - 1, 1), (t + 1, 1), (t - 1, 4), (t^4 + 1, 1)],
  'dim Ti': [1, 1, 4, 4], 'dim Li': [1, 0, 3, 0]})
\end{verbatim}
\end{example}

\begin{example}\label{ex:weak}
  Consider now the K3 surface
  \begin{equation}
X/\Qq \colon x^{4} + 2 y^{4} + 2 y z^{3} + 3 z^{4} - 2 x^{3} w - 2 y w^{3} = 0.
  \end{equation}
  At $\frakp = (43)$ the geometric Picard number of $X_\frakp$ is two, and one Tate class is obstructed, thus the geometric Picard number of $X$ is one.

  Now take $\frakp = (67)$.
  The characteristic polynomial of $67^{-1} \Frob_\frakp$ factors as
  \begin{multline}
    (t - 1) \cdot (t + 1)^3 \cdot (67  - 171 t^{1} + 181 t^{2} - 53 t^{3} - 71 t^{4} + 9 t^{5}\\+ 178 t^{6} - 215 t^{7} + 54 t^{8} + 50 t^{9} + 54 t^{10}
    \ldots)/67,
  \end{multline}
  thus the geometric Picard number of $X_\frakp$ is $4$.

  As the polarization is unobstructed, there is only one irreducible space to consider, corresponding to $(t + 1)^3$.
  Therefore, in contrast with the previous example,  we may only obstruct the liftability of one class, as the obstruction space is one dimensional.
  Hence, we cannot obtain a sharp upper bound for the geometric Picard number of $X$ by studying its specialization at $\frakp = (67)$.
\end{example}

\begin{example}\label{ex:real_multiplication}
  Now we look at a K3 surface with real multiplication.
  Consider the K3 surface given by desingularizing the following double cover of the projective plane.
  It is given in the weighted projective plane $\Pp(1,1,3)$ and is branched over six lines.
  \begin{equation}
    X/\Qq \colon w^2 = \left(- y^{2}/8 + y z - z^{2}\right) \left(7x^{2}/8 + 5 x z + 7 z^{2} \right) \left(2 x^{2} + 3 x y + y^{2}\right).
  \end{equation}
  This is the specialization of the family $X^{(2,t)}$ in~\cite[Theorem~6.6]{ej-rm} with $t = 1$. By \emph{loc.\ cit.}\ it has geometric Picard number 16 and real multiplication by $\Qq(\sqrt{2})$.
  In particular, every reduction of $X$ will overestimate its geometric Picard rank by a multiple of $2$~\cite{charles-14}. In fact, the reductions must have geometric Picard rank 18 or 22~\cite[Corollary~4.12]{ej-rm}.
  The van Luijk~\cite{vanluijk-07} method fails in this case. We now show that the obstruction method will succeed.

  Take the prime $\frakp = (83)$. By counting points on the singular model, we deduce that the characteristic polynomial of $83^{-1} \Frob_\frakp$ can be factored as $\chi_{1} \cdot \chi_{2}$ where
  \begin{equation}
    \chi_{1} = (t - 1)^{10} (t + 1)^6,\quad \chi_{2} = (t^2 + 1) (83 - 158 t^2 + 83t^4) /83.
  \end{equation}
  Here, $\chi_{1}$ corresponds to the action of the Frobenius on the known $16$ algebraic cycles on $X$. The factor $\chi_2$ corresponds to the action on their orthogonal complement (the transcendental part).

  In keeping with the notation of Proposition~\ref{prop:cylic_splitting}, let $T_4 ^1(X_{\xp}) \colonequals \ker\left( \Frob^2 + p^2 \right) $. From the argument above and Proposition~\ref{prop:decompositions_agree}, we conclude that $T_4^1(X_\xp)$ spans algebraic cycles in $\overline{X}_\xp$ which do not lift. In particular, by Theorem~\ref{thm:simplified bloch-esnault-kerz-14}, the obstruction map must be non-zero on $T_4^1(X_\xp)$.

  Consequently, using sufficiently high precision, one can detect that the obstruction map $\pi_{T_4}$ is non-zero. Using Corollary~\ref{cor:better_bound} one would then conclude that the geometric Picard number of $X$ is $16$.
  We did not implement the computation of the Frobenius on a smooth variety given as the resolution of a singular hypersurface.
  Therefore, we cannot comment on the precision required.
\end{example}

\subsubsection{Quintic surfaces}

We now consider smooth quintic surfaces.
For these surfaces the Hodge numbers are $(4, 45, 4)$.
Now the obstruction space is four dimensional, and thus favoring richer examples.

\begin{example}
    Consider the following quintic surface
    \begin{equation}
      X/\Qq \colon 10 x^{4} y - 3 z^{5} + 3 x^{4} w + 3 y^{4} w - 10 x w^{4} - 23 x^{3} y z = 0.
    \end{equation}
    We compute that $X_\frakp$ has Picard number one at the prime $\frakp=(13)$.
    This means the Picard number of $X$ is one.
    On the other hand, the reduction $X_\frakp$ at $\frakp=(23)$ has $5$-dimensional space of Tate classes.
    Fortunately, the obstruction space is $4$-dimensional and we see that $\pi_T$ is indeed rank $4$.
    Thus, this method allows for the correct determination of the Picard rank of $X$ at a prime where the jump in Picard number is four.
    It took about 1 day of CPU time per prime to compute the Hasse--Weil zeta function and an approximation for the Frobenius matrix.
\end{example}

\begin{example}\label{ex:drop_by_one}
The following example suggests that we are not necessarily immune to symmetries in the transcendental lattice.
  Take the quintic surface
  \begin{equation}
    X/\Qq \colon 3x^4z + 9xy^4 + 9y^2z^3 + z^5 + 5w^5 = 0.
  \end{equation}
  The geometric Picard number of $X_\frakp$ at $\frakp=(31)$, and therefore of $X$, is one.
  On the other hand, the dimension of the space of eventual Tate classes $\Ttilde^1(X_\frakp)$ of $X_\frakp$ at $\frakp=(29)$ is $5$, and the characteristic polynomial of the Frobenius action on $\Ttilde^1(X_\frakp)$ is $(t - 31)^3 (t + 31)^2.$
  This time, the following result shows that there is another theoretical limit to the best possible upper bound we can compute at $\frakp=(29)$.

  We recall the decomposition $\Ttilde^1(X_\frakp) = T_1^1(X_\xp) \bigoplus T_2^1(X_\xp)$ as in Proposition~\ref{prop:cylic_splitting}.

  \begin{proposition}
    The images of the obstruction maps $\pi_{T_1}$ and $\pi_{T_2}$ coincide. This image has dimension~$1$, although the codomain has dimension~$4$.
  \end{proposition}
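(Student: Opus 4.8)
The plan is to exploit the order‑$5$ automorphism $\tau\colon[x:y:z:w]\mapsto[x:y:z:\zeta_5 w]$ of $X$, which is defined over $\Qq(\zeta_5)$ because the variable $w$ enters the defining equation only through the monomial $5w^5$. Its reduction acts on $\overline{X}_\frakp$, and since $29\equiv-1\pmod 5$ the $q$-power Frobenius conjugates $\tau$ to $\tau^{-1}$. First I would pass to the $\Qq$-rational correspondence $\psi\colonequals\tau^*+(\tau^{-1})^*$ acting on $H\colonequals\H^2_{\crys}(X_\frakp/\Zz_\frakp)\otimes\Qxp$: being invariant under $\gal(Q(\k')/\Qxp)$ it is $\Qxp$-linear, it commutes with $\Frob_\k$ (the conjugation relation carries $\psi$ to itself), and it preserves the model filtration $\F^\bullet_\cx$ because it is algebraic, hence it commutes with the obstruction map $\pi_\cx$ of Definition~\ref{def:obstruction_map}. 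Its eigenvalues are $2$, $\alpha\colonequals\zeta_5+\zeta_5^{-1}$ and $\beta\colonequals\zeta_5^2+\zeta_5^{-2}$, the roots of $t(t^2+t-1)$; since $5$ is a square modulo $29$ they all lie in $\Qxp$, so $\psi$ induces a $\Frob_\k$- and $\F^\bullet_\cx$-stable splitting $H=H_{[0]}\oplus H_{[\alpha]}\oplus H_{[\beta]}$.

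Next I would identify the pieces. The $\Zz/5$-invariant part of cohomology is pulled back from the quotient $X/(\Zz/5)=\Pp^2$ (the projection $[x:y:z:w]\mapsto[x:y:z]$, which is a well-defined degree-$5$ morphism since $[0:0:0:1]\notin X$ and pulls $\mathcal{O}_{\Pp^2}(1)$ back to $\mathcal{O}_X(1)$), so $H_{[0]}=\Qxp\langle h\rangle$ with $h$ the polarization. By Griffiths' description of $\F^2H$ (Section~\ref{sec:griffiths}) together with $\tau^*(\vol_{\ppp})=\zeta_5\,\vol_{\ppp}$ and $\tau^*f=f$, the automorphism $\tau^*$ acts on the four-dimensional $\F^2H$ with weight $\zeta_5$ on the part spanned by $x,y,z$ and weight $\zeta_5^2$ on the part spanned by $w$; hence $\F^2H\subset H_{[\alpha]}\oplus H_{[\beta]}$ with $3$-dimensional $[\alpha]$-part, $1$-dimensional $[\beta]$-part, and $\F^2H\cap H_{[0]}=0$. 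Since $H/\F^1H\simeq(\F^2H)^\vee$ by Poincaré duality and $\psi$ is self-adjoint for the cup product, $\dim(H/\F^1H)_{[0]}=0$, $\dim(H/\F^1H)_{[\alpha]}=3$, $\dim(H/\F^1H)_{[\beta]}=1$; pairing against $\F^2H$ also shows that $\pi_\cx$ annihilates every vector of $\tau^*$-weight $\zeta_5$ or $\zeta_5^2$. Finally, $\Ttilde^1(X_\frakp)=\Qxp\langle h\rangle\oplus\Ttilde_{[\alpha]}\oplus\Ttilde_{[\beta]}$, and the $4$-dimensional complement of $\Qq h$ in $\NS(\overline{X}_\frakp)\otimes\Qq$ is a $\Zz/5$-module with no trivial summand, hence the irreducible $\Qq[\Zz/5]$-module $\Qq(\zeta_5)$, on which $\psi$ acts with eigenvalues $\alpha,\beta$ each of multiplicity $2$; thus $\Ttilde_{[\alpha]}$ and $\Ttilde_{[\beta]}$ are each $2$-dimensional. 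On each of them $q^{-1}\Frob_\k$ is the involution that is $+1$ on $T_1^1(X_\frakp)$ and $-1$ on $T_2^1(X_\frakp)$ (from the stated factorization $(t-1)^3(t+1)^2$ and Proposition~\ref{prop:cylic_splitting}), and over $Q(\k')$ it interchanges the two $\zeta_5^{\pm j}$-weight lines, hence has trace $0$; so choosing a $\tau^*$-weight-$\zeta_5$ vector $a_1\in\Ttilde_{[\alpha]}\otimes Q(\k')$ and a weight-$\zeta_5^2$ vector $a_2\in\Ttilde_{[\beta]}\otimes Q(\k')$, over $Q(\k')$ one has
\[
  T_1^1(X_\frakp)=\langle h,\ a_1+q^{-1}\Frob_\k a_1,\ a_2+q^{-1}\Frob_\k a_2\rangle,\qquad
  T_2^1(X_\frakp)=\langle a_1-q^{-1}\Frob_\k a_1,\ a_2-q^{-1}\Frob_\k a_2\rangle.
\]

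Now $\pi_\cx(h)=0$ and $\pi_\cx(a_1)=\pi_\cx(a_2)=0$ by the weight remark, so $\pi_\cx\big(a_i\pm q^{-1}\Frob_\k a_i\big)=\pm q^{-1}\,\pi_\cx(\Frob_\k a_i)$, whence $\im\pi_{T_1}\otimes Q(\k')=\langle\pi_\cx(\Frob_\k a_1),\,\pi_\cx(\Frob_\k a_2)\rangle=\im\pi_{T_2}\otimes Q(\k')$; both images are $\Qxp$-rational, so they are equal, which is the first assertion. The vectors $\pi_\cx(\Frob_\k a_1)$ and $\pi_\cx(\Frob_\k a_2)$ have $\tau^*$-weights $\zeta_5^{-1}$ and $\zeta_5^{-2}$, so they lie in distinct $\psi$-eigenspaces and the common image has dimension at most $2$; it is nonzero because the geometric Picard number of $X$ is $1$ (obtained above at $\frakp=(31)$) while $\dim\Ttilde^1(X_\frakp)=5$ forces some Tate class to be obstructed (Theorem~\ref{thm:simplified bloch-esnault-kerz-14}); and the explicit approximation of $\pi_\cx$ on $\Ttilde^1(X_\frakp)$ shows that exactly one of $\pi_\cx(\Frob_\k a_1),\pi_\cx(\Frob_\k a_2)$ vanishes, so the image has dimension $1$, whereas the codomain $H/\F^1H$ has dimension $h^{0,2}=4$.

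The main obstacle I anticipate is the linearity subtlety already flagged in Section~\ref{sec:improve_obstruction} and Proposition~\ref{prop:even_better_bound}: the automorphism $\tau$ only becomes available after base change to $Q(\k')=\Qxp(\zeta_5)$, on which $\Frob_\k$ acts $\sigma$-semilinearly rather than linearly, so the $\tau^*$-weight decomposition of $\Ttilde^1(X_\frakp)\otimes Q(\k')$ and the displayed bases must be transported between $\Qxp$ and $Q(\k')$ with care; working with the $\Qxp$-rational operator $\psi$ throughout, and only invoking $\tau^*$ on the extended scalars, is meant to keep this under control. The second delicate point is that the representation-theoretic analysis by itself yields only the bound $\le 2$ on the common image, and ruling out dimension $2$ genuinely requires the computed value of the obstruction map rather than a purely structural argument.
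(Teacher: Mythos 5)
Your route is genuinely different from the paper's: you exploit the order-$5$ symmetry $w\mapsto\zeta_5 w$ through the $\Qxp$-rational correspondence $\psi=\tau^*+(\tau^{-1})^*$ (using $29\equiv-1\bmod 5$), whereas the paper works with the $\Qq$-rational involution $y\mapsto-y$. Your weight analysis does give a clean structural proof of the \emph{first} assertion: the obstruction space carries only the $\tau^*$-weights $\zeta_5^{-1}$ (three times) and $\zeta_5^{-2}$ (once), so $\pi_{\cx}$ kills every weight-$\zeta_5$ or weight-$\zeta_5^2$ vector, and since the $\Qxp$-linear extension of $\Frob_\k$ swaps the $\pm j$ weight lines, $\pi_{\cx}\bigl(a\pm q^{-1}\Frob_\k a\bigr)=\pm q^{-1}\pi_{\cx}(\Frob_\k a)$ has the same span for both signs; hence $\im\pi_{T_1}=\im\pi_{T_2}$ without any computation, and in fact independently of your $(2,2)$ split of $\Ttilde_{[\alpha]}\oplus\Ttilde_{[\beta]}$. (That split, as you justify it, silently assumes the Tate conjecture for the quintic $X_\frakp$, since you identify the $4$-dimensional primitive part of $\Ttilde^1(X_\frakp)$ with the primitive part of $\NS(\overline{X}_\frakp)\otimes\Qq$; the parity argument alone only forces both pieces to be even-dimensional, so your displayed bases for $T_1,T_2$ are conditional. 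This is repairable, but should be flagged.)

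The genuine gap is in the \emph{second} assertion. Your symmetry confines the common image only to the sum of two lines, one in the weight-$\zeta_5^{-1}$ component and one in the weight-$\zeta_5^{-2}$ component, i.e.\ to dimension at most $2$, and you rule out dimension $2$ by asserting that ``the explicit approximation of $\pi_{\cx}$ shows that exactly one of $\pi_{\cx}(\Frob_\k a_1),\pi_{\cx}(\Frob_\k a_2)$ vanishes.'' A finite-precision $p$-adic approximation can certify non-vanishing but can never certify vanishing (this is precisely the one-sidedness discussed in \S~\ref{sec:dim Li}), so this step does not prove the image is $1$-dimensional; and your fallback that some Tate class ``must be obstructed because $\rho(X_{\overline{\Qq}})=1$'' again invokes the unproven Tate conjecture for $X_\frakp$ (an unobstructed non-algebraic Tate class would be harmless). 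The paper's choice of the involution $y\mapsto-y$ is exactly what closes this hole: equivariance of $\pi_{\cx}$, the observation that the $(-1)$-eigenspace of $\iota^*$ in the obstruction space is one-dimensional, and the full-rank check that all primitive Tate classes are $(-1)$-eigenvectors (a non-vanishing statement about $4\times4$ minors, rigorously certifiable at finite precision) confine the image \emph{a priori} to a fixed line, after which only rank-$\geq1$ computations are needed. To complete your argument you would need a comparable a priori constraint — for instance combining your $\mu_5$-analysis with this extra involution, which commutes with $\tau$ — or an exact, non-approximate proof that one of your two weight components of the image vanishes.
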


  \begin{proof}
    We use the action of the involution $\iota\colon y \mapsto -y$ on the obstruction space and on the Tate classes. By inspecting the Griffiths basis, it is immediately seen that the $(-1)$-eigenspace of $\iota^*$ on the obstruction space is one dimensional.

    We claim that all primitive Tate classes are $(-1)$-eigenvectors of $\iota^*$ on cohomology. To show this, we computed an approximation of $\Ttilde^1(X_\frakp)$ and projected this approximation to the $(-1)$-eigenspace of $\iota^*$. The minimal valuation of the $4 \times 4$ minors of a basis of the image is $0$, even with $10$ digits of approximation. This proves the claim.

    The obstruction map is $\iota^*$ equivariant and the polarization is unobstructed. Therefore, the image of the Tate classes via $\pi_{\Ttilde}$ must be confined to a one dimensional space. We observe that the two ranks are at least $1$ by computation.
  \end{proof}

  We note another behavior to this problem at another prime. At $\frakp = (23)$, the characteristic polynomial of the Frobenius action on $\Ttilde(X_\frakp)$ is now $(t - 23)^2 (t + 23) (t^2 + 23^2)$. Since the Galois representation $\Ttilde^1(X_\xp)$ splits into more invariant pieces, and one piece is two dimensional, we can drop the rank to $1$ using Corollary~\ref{cor:better_bound}.
  It takes about 1 CPU day per prime to get these results.

  \begin{remark}
    By the computations at $\frakp = (23), (29)$ and $(31)$, we know that the dimension of the endomorphism algebra of the transcendental lattice is at most 4.
    Furthermore, we note that $X$ has another automorphism given $w \mapsto \zeta_5 w$, thus the transcendental lattice has CM by $\Qq(\zeta_5)$.
    Then, one expects the Picard number of the reductions to be congruent to 1 modulo 4~\cite{charles-14}.
  \end{remark}
\end{example}

\printbibliography
\end{document}